\renewcommand{\Re}{\mathop{\rm Re}\nolimits}
\renewcommand{\Im}{\mathop{\rm Im}\nolimits}
\theoremstyle{plain} \newtheorem{theorem}{Theorem}[section]
\newtheorem{lemma}[theorem]{Lemma}
\newtheorem{proposition}[theorem]{Proposition}
 \theoremstyle{definition}
\newtheorem{definition}[theorem]{Definition} \theoremstyle{remark}
\newtheorem{remark}[theorem]{Remark} 
\newcommand{\R}{{\mathbb R}}
\newcommand{\N}{{\mathbb N}}
\newcommand{\resto}{{\mathcal R}} \def\im{{\rm i}}
\newcommand{\C}{\mathbb{C}}
\def\({\left(}
\def\){\right)}
\def\<{\left\langle}
\def\>{\right\rangle}
\numberwithin{equation}{section}
\begin{document}

  \title{ On   small energy stabilization     in the NLKG with a trapping potential }

 \author {Scipio Cuccagna, Masaya Maeda   and Tuoc V. Phan}

 \maketitle
\begin{abstract}   We consider  a nonlinear Klein Gordon equation (NLKG) with short range  potential
   with eigenvalues and  show that in the contest of complex valued solutions the
   small standing waves  are attractors for small solutions of the NLKG. This extends
     the results already known for the nonlinear Schr\"odinger equation and for the nonlinear Dirac equation.  In addition, this extends a result
   of Bambusi and Cuccagna (which in turn was an extension of a result by Soffer  and Weinstein) which considered only real valued solutions of the NLKG.  \end{abstract}

\section{Introduction}\label{sec:intr}
We consider    the following   nonlinear Klein Gordon equation  initial value problem in $ (t,x) \in \R \times \R^3$:
\begin{equation}\label{eq:NKGE}
  \left\{\begin{matrix}
  \dot v  +H u +m^2 u +  |u|^2   u =0 \text{  and } \dot u=v ,   \\
u(0 )=u_0 \in H^1(\R ^3, \C)   \text{  and }   v(0 )=v_0  \in L^2(\R ^3, \C)  .
\end{matrix}\right.
\end{equation}
where $H=-\Delta + V$  with $V\in \mathcal{S}(\R ^3, \R )$, the space of real valued and Schwartz functions.

We recall that   if
$H$  has eigenvalues, then the solutions of the linear version of \eqref{eq:NKGE}, that is with $|u|^2   u$ omitted,
are  formed by various  uncoupled oscillators   and by scattering continuous modes.
In particular, each
eigenvalue can be associated to an invariant linear space in  $ H^1 \times L^2$.  Such invariant spaces
exist  near the origin also for the nonlinear problem   \eqref{eq:NKGE} in the form of topological disks
which are tangent to the linear spaces at the origin.  In the case of the nonlinear Schr\"odinger  equation,
\cite{CM1}, and of the nonlinear Dirac  equation,
\cite{CT}, it as been shown that the union of these disks is an attractor  for all  small energy solutions.
We will prove the same result for \eqref{eq:NKGE}.  This, as in \cite{CM1,CT}, is non trivial  because
one might  have  expect \eqref{eq:NKGE} to have complicated quasi periodic solutions as for  
discrete equations, e.g. \cite{maeda}.

\noindent A partial version of the problem addressed here  has been considered in \cite{SW3,bambusicuccagna} (for a further addition to the result in  \cite{SW3} see also \cite{AS}). In particular, in \cite{bambusicuccagna}  it is shown in quite some generality that when $u_0$ and $v_0$ are real valued then any small energy solution $(u(t), v(t))$ scatters to the origin, that is $ \rho =0$ in Theorem \ref{thm:small en} below. Here   we generalize
this because we consider $u_0$ and $v_0$ complex valued. In this latter  case   $(u(t), v(t))$ can scatter  to
small standing waves, which are always    not   real valued  and therefore could not be seen  in \cite{SW3,bambusicuccagna}.
Since the attracting set is more complicated than just a single point,   the result we obtain here is substantially different than \cite{bambusicuccagna}, and the proof is more elaborate.  Obviously, our paper is motivated by our interest on discrete and
continuous modes interactions. Indeed, if $H$ has no eigenvalues, then the scattering to 0 of all small energy solutions
is a standard consequence of the results on wave operators in Yajima \cite{Y1}.

\noindent Before further comments we introduce some notation, the hypotheses and our main results.

\noindent For $ g,h:\R^3\to \C^i$ ($i=1,2$ with $g=(g_1,g_2)$ and $h=(h_1,h_2)$ if $i=2$), we use the real inner product
 \begin{equation}\label{eq:bilf}
    \langle   g |h\rangle = \begin{cases}\Re \int _{\R ^3} g(x) \overline{h(x)} dx,\quad for\  i=1,\\
\Re \int_{\R^3} \(g_1(x) \overline{h_1(x)}+g_2(x) \overline{h_2(x)}\)\,dx,\quad for \ i=2.\end{cases}
 \end{equation}
 We introduce the Japanese bracket $\langle x \rangle := \sqrt{1+|x| ^2} $ and the spaces
defined by the following norms:
\begin{align} & L^{p ,s}(\R ^3, X ) \text{  defined with  }
  \| u \| _{L^{p,s}(\R ^3, X ) }  :=  \|   \langle x \rangle ^s   u \| _{L^p (\R ^3, X)} ;   \nonumber \\
 &  H^k(\R ^3, X  ) \text{  defined with  }
  \| u \| _{H^k(\R ^3, X  ) }  :=  \|   \langle x \rangle ^k   \widehat{{u}} \| _{L^2 (\R ^3, X   )},
 \text{ where  $ \widehat{{u}}$   the   Fourier transform of $u$;}
   \nonumber\\
& \Sigma ^k(\R ^3, X ) :=L^{2 ,k}(\R ^3, X )\cap   H^k(\R ^3, X )  \text{   with  }  \| u \|^2 _{\Sigma ^k} = \| u \| _{H^{k }(\R ^3, X  ) } ^2+ \| u \|^2 _{L^{2,k}(\R ^3, X   ) },\label{eq:Sigma}
 \end{align}
where $X=\C, \C^2$ or $\R$.
We assume the following.
\begin{itemize}
  \item[(H1)]
$V\in\mathcal{S}(\R^3,\R )$, where $\mathcal S (\R^3,\R ) = \cap _{m\ge 0}\Sigma ^m(\R ^3, \R )$ is the space of  Schwartz functions.

\item[(H2)]
$\sigma_p(H) $ is formed by points $-m^2< e_1<e_2< e_3 \cdots < e_n<0 $, where $\sigma_p(H)$ is the set of point spectrum of $H$.   We also assume that all the eigenvalues have multiplicity 1. Moreover,
 0 is neither an eigenvalue nor a resonance (that is, if $Hu=0$  with $u\in  C^\infty$ and     $|u(x)|\le C|x|^{-1} $ for a fixed $C$, then $u=0$).

\item[(H3)] Set $\omega _{j} = \sqrt{m^2+e_j}$
and consider the smallest $N\in \N$ s.t. $N \min \{ \omega _j \pm \omega _l : j\geq l \} \ge 2m $.
We assume that $|\sum_{j=1}^n m_n \omega_n|\neq m 	$ for all  $|\mathbf m|\leq 4N+6$.
\item[(H4)]  The    Fermi Golden Rule (FGR)  holds:   the expression \begin{equation*}       \sum _{L\in \Lambda}  L \<
 {G}_{L }  | \delta(-\sigma_3 B-L)  \sigma _3{G}_{L}
   \>
 ,
\end{equation*}
which is defined    in the course of the paper (for  $\Lambda \subset \R _+$   see under \eqref{equation:FGR3},  for  $ {G} _L$ see \eqref{eq:pos2})
and   is shown in Sect. \ref{discrete} to be  non--negative and to  equal the l.h.s. of \eqref{eq:FGR}, is assumed here to
satisfy   inequality \eqref{eq:FGR}.

\end{itemize}

We introduce constants \begin{equation}  \label{eq:veceig}  \begin{aligned} & \widetilde{\omega}=(\widetilde{ \omega} _{1},...,\widetilde{ \omega} _{2n})  \text{ with } \widetilde{ \omega} _{J} = (-1) ^{\kappa _J} \omega _{J}, \text{ and } \\& \kappa _J=\left\{\begin{matrix}
0 \text{ for $J=1,...,n$} ,\\
1\text{ for $J=n+1,...,2n$},
\end{matrix}\right.  \text{ we set }   { \omega} _{J} =\left\{\begin{matrix}
 \omega _{J}\text{ for $J\le n$} ,\\
 \omega _{J-n}\text{ for $J>n$}.
\end{matrix}\right. \end{aligned}
\end{equation}

To each  $e_j$   we associate an eigenfunction $\phi _j$. We choose them s.t.  $\langle  \phi _j|{\phi }_k\rangle  =\delta _{jk} $.
 Since we can,  we also   choose the $\phi _j$  to be all real valued.
To each  $\phi _j$ we associate   nonlinear bound states. For a proof
of the following standard result see Appendix A in \cite{CM1}.
Notice that the real analyticity with respect to $z$ is also immediate from the fact that the nonlinearity is analytic, see also \cite{maeda}.
Moreover, for
 the spaces $\Sigma^t  $ and the notation $D_X (v, \delta)$, we refer to Sect. \ref{subsec:notation}.

\begin{proposition}[Bound states]\label{prop:bddst} There exists
$a _0>0$ such that for every $j\in \{ 1,\cdots,n\}$, and $\forall z~\in~D_\C ( 0, a _0 )$, there is a unique  $Q_{jz } \in \mathcal{S}(\R^3, \C ) := \cap _{t\ge 0}\Sigma^t (\R^3, \C )$, s.t.
\begin{equation}\label{eq:sp}
\begin{aligned}
&H Q_{jz } + |Q_{jz }|^2  Q_{jz }= E_{jz }Q_{jz } \quad , \quad
Q_{jz }=  z \phi_j + q_{j z}, \ \langle q_{j z}|{\phi}_j\rangle =0,
\end{aligned}
\end{equation}
and s.t. we have for any $r\in \N$:
\begin{itemize}
\item[\textup{(1)}]    $(q_{jz },E_{jz }) \in C^\omega ( D_\C ( 0, a _0 ), \Sigma ^r(\R^3,\C)\times \R )$;   and $q_{jz } =  z \widehat{q}_{j  } (|z|^2)$, with
$\widehat{q}_{j  } (t^2 ) =t ^2\widetilde{q}_{j }(t^2)$,  for $\widetilde{q}_{j } (t ) \in C^\omega (   ( - {a _0 }^{2}, {a _0 }^{2}), \Sigma ^r (\R ^3, \R ) )$,  and  $E_{jz }  =E_{j } (|z|^2)$ with $E_{j } (t ) \in C^\omega (   ( - {a _0 }^{2}, {a _0 }^{2}),   \R  )$;
\item[\textup{(2)}]    $\exists$ $C >0$ s.t.
$\|q_{jz }\|_{\Sigma ^r} \leq C |z|^3$, $|E_{jz }-e_j|<C | z|^2$.
\end{itemize}

\end{proposition}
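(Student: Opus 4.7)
The plan is to carry out a Lyapunov--Schmidt reduction near the simple eigenpair $(e_j,\phi_j)$, combined with the implicit function theorem in the real analytic category, and to exploit the $U(1)$ gauge symmetry $Q\mapsto e^{\im\theta}Q$ of the stationary equation to reduce to real $z$ and to read off the parametric form $q_{jz}=z\widehat q_j(|z|^2)$.

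First, I would exploit the gauge invariance: the equation $HQ+|Q|^2Q=EQ$ is invariant under $Q\mapsto e^{\im\theta}Q$ for any real $\theta$, so it suffices to construct $Q_{jt}$ for real parameters $t\in(-a_0,a_0)$ and then set $Q_{j,e^{\im\theta}t}:=e^{\im\theta}Q_{jt}$. Uniqueness of the branch will then force $q_{jz}=z\widehat q_j(|z|^2)$ and $E_{jz}=E_j(|z|^2)$, and the $Q\mapsto -Q$ symmetry together with the fact that the nonlinearity is of cubic homogeneity will give $\widehat q_j(0)=0$, i.e.\ $\widehat q_j(t^2)=t^2\widetilde q_j(t^2)$.

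For the construction at real $t$, I would set $Q=t\phi_j+q$ with $\langle q|\phi_j\rangle=0$ and $E=e_j+\lambda$. Plugging in and splitting with the spectral projection $P_j$ onto $\phi_j$ and its complement $P_j^\perp$, the system becomes
\begin{align*}
(H-e_j)q-\lambda q +P_j^\perp\bigl[|t\phi_j+q|^2(t\phi_j+q)\bigr]&=0,\\
-\lambda t+\bigl\langle \phi_j\,\bigl|\,|t\phi_j+q|^2(t\phi_j+q)\bigr\rangle&=0.
\end{align*}
On $\mathrm{Ran}\,P_j^\perp$ the operator $H-e_j$ is boundedly invertible (the eigenvalue is simple and the remaining point spectrum and the threshold are separated from $e_j$ by (H2)), and because $V\in\mathcal{S}$ and $\phi_j\in\mathcal{S}$ one checks, using standard elliptic regularity and commutators with $\langle x\rangle^k$, that $(H-e_j)^{-1}P_j^\perp$ is a bounded operator on $\Sigma^r(\R^3,\C)$ for every $r\in\N$. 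I would then rewrite the perpendicular equation as a fixed point
\[
q=(H-e_j)^{-1}P_j^\perp\Bigl[\lambda q-|t\phi_j+q|^2(t\phi_j+q)\Bigr],
\]
and apply the analytic implicit function theorem in $\Sigma^r\times\R\times\R$ at $(q,t,\lambda)=(0,0,0)$. The right hand side is real analytic in $(q,t,\lambda)$ because the cubic nonlinearity is a real analytic map $\Sigma^r\to \Sigma^r$ and $(H-e_j)^{-1}P_j^\perp$ is a bounded linear operator on $\Sigma^r$. The IFT yields a unique real analytic solution $q=q(t,\lambda)\in\Sigma^r$ with $q(0,0)=0$ and the a priori bound $\|q(t,\lambda)\|_{\Sigma^r}\lesssim |t|^3+|t\lambda|$; one then bootstraps in $r$ to obtain a single $q(t,\lambda)\in\mathcal{S}$, using that the IFT solution is unique in every $\Sigma^r$ and compatible across $r$.

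Substituting $q(t,\lambda)$ into the parallel equation and dividing by $t$ (allowed for $t>0$, and extending to $t=0$ by analyticity) I obtain a scalar analytic equation
\[
\lambda=t^2\|\phi_j\|_{L^4}^4+O(t^4+t^2\lambda),
\]
whose linearization in $\lambda$ at $(t,\lambda)=(0,0)$ equals $1$, so a second application of the analytic IFT produces $\lambda=\lambda(t^2)$ with $\lambda(0)=0$ and $\lambda(s)=s\|\phi_j\|_{L^4}^4+O(s^2)$, giving $E_j(s)=e_j+\lambda(s)$ and the bound $|E_{jz}-e_j|\le C|z|^2$. Finally the gauge symmetry $Q_{j,e^{\im\theta}t}=e^{\im\theta}Q_{jt}$ forces $q_{jz}=z\widehat q_j(|z|^2)$ and, combined with the cubic scaling of the nonlinearity, $\widehat q_j(t^2)=t^2\widetilde q_j(t^2)$ with $\widetilde q_j$ real analytic and $\Sigma^r$-valued; the estimate $\|q_{jz}\|_{\Sigma^r}\le C|z|^3$ follows.

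The main obstacle I anticipate is organising the regularity/decay bookkeeping so that the IFT can be run once and for all in the scale $\{\Sigma^r\}_{r\in\N}$: one must verify that $(H-e_j)^{-1}P_j^\perp$ preserves weighted Sobolev spaces of arbitrary order (this rests on $V\in\mathcal{S}$ and the absence of a zero resonance from (H2)), and that the solution produced in one $\Sigma^r$ automatically lies in the next, so that no new branch appears at higher regularity. Everything else is a direct application of the analytic IFT together with the $U(1)$ gauge reduction.
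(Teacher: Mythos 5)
The paper does not include its own proof—it defers to Appendix A of \cite{CM1}—but your Lyapunov--Schmidt reduction combined with the real-analytic implicit function theorem and the $U(1)$ gauge reduction is precisely the standard argument used there, and every step you outline is sound. One small inaccuracy: the absence of a zero resonance in (H2) plays no role for this proposition, since $e_j<0$ lies strictly below $\sigma_e(H)=[0,\infty)$, so $(H-e_j)^{-1}P_j^\perp$ is automatically bounded on each $\Sigma^r$ (using $V\in\mathcal S$ together with commutators with $\langle x\rangle^k$); the zero-resonance assumption is needed later for the dispersive estimates, not here. Also, to obtain an $a_0$ that is uniform in $r$ — as the statement requires — the cleanest route is to run the IFT once in a single fixed $\Sigma^{r_0}$ and then use elliptic regularity and the rapid decay of $\phi_j$ to promote the resulting $q(t,\lambda)$, on the \emph{same} disk $D_\C(0,a_0)$, into every $\Sigma^r$; you gesture at this with the compatibility-across-$r$ remark, but it is worth making the bootstrap-from-the-equation mechanism explicit, since rerunning the IFT at each $r$ would a priori shrink the disk.
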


 \noindent We set
 \begin{equation}\label{eq:othJ}
  \omega _{Jz }  := \left\{\begin{matrix}
\sqrt{ m^2+ E_{Jz }}  \text{ for $J\le n$} ,\\
\sqrt{ m^2+ E_{(J-n)z }}  \text{ for $J> n$}
\end{matrix}\right.    \, , \, \phi _{J  }  := \left\{\begin{matrix}
\phi _{J  }   \text{ for $J\le n$} ,\\
\phi _{J -n }   \text{ for $J> n$ } \end{matrix}\right.\, , \, q _{J z }  := \left\{\begin{matrix}
q _{J z }  \text{ for $J\le n$} ,\\
q _{(J-n)z }  \text{ for $J> n$ ,}
\end{matrix}\right.
 \end{equation}
and further set $\tilde \omega_{J}$ as \eqref{eq:veceig}.
 Let $Q_{Jz }=  z \phi_J + q_{J z}$ for all $J\le 2n$.
 Then $u(t,x) = e ^{\pm \im t \omega _{Jz }} Q_{Jz } (x)$ are solutions to
 \eqref{eq:NKGE1}. For  $z\in D_{\C^{2n }}(0,b _0) $   for $b _0>0$    sufficiently small we consider
\begin{align}\label{eq:standw}
\Phi_J [z ]:=(Q_{J z},  \im \widetilde{\omega} _{J z} Q_{J z})\ \mathrm{for}\ z\in\C\ \mathrm{and}\     \Phi [z ]:= \sum_{J=1}^{2n} \Phi_J[z_J] \ \mathrm{for}\ z\in\C^{2n}.
\end{align}
Notice that by the definition of $\Phi$ and Proposition \ref{prop:bddst}, we have the gauge property $\Phi[e^{\im \theta}z]=e^{\im \theta}\Phi[z]$.

In  $L^2(\R^3,\C )\times L^2(\R^3,\C)$  we consider    the symplectic form
\begin{equation}\label{eq:Omega}
\Omega(U|V ):=2\<\mathbb J^{-1}U|V\>,\
\mathrm{where}\ \mathbb J =\begin{pmatrix} 0 & -1 \\ 1 & 0\end{pmatrix}.
\end{equation}

\begin{definition}\label{def:contsp}
Let       $z=(z_1,..., z_{2n})$, $z_{JR}=\Re z_J$,  $z_{JI}=\Im z_J$.  For any $z\in D_{\C^{2n }}(0,b _0) $  with $b _0 >0$ be sufficiently small
and for $\partial _{ {JA}}=\partial _{z_{ JA}}$
we set
\begin{equation}\label{eq:contsp}
\begin{aligned}
\mathcal{H}_c[z ] :=  \{( \eta _1, \eta _2) \in L^2\times L^2 :
\Omega(\partial _{{JA}} \Phi [z ] |( \eta _1, \eta _2) )=0   \text{ $\forall$ $J $ and $A=R,I$} \}.
\end{aligned}
\end{equation}
\end{definition}
 \begin{remark}
Notice that     $\mathcal{H}_c[0 ] = L_c^2\times  L_c^2$ where
  $L^2_c= P_cL^2$ is the continuous space associated to $H$ in   $L^2(\R^3,\C)$.
Here,
\begin{align}\label{eq:contproj}
P_cu=u-\sum_{j=1}^n\(\<u,\phi_j\>\phi_j+\<u,\im \phi_j\>\im\phi_j\).
\end{align}

\end{remark}

\begin{remark}
By $\Phi[e^{\im \theta}z]=e^{\im \theta}\Phi[z]$ we have $(\eta_1,\eta_2)\in \mathcal H_c[z]\Leftrightarrow (e^{\im \theta}\eta_1,e^{\im \theta}\eta_2)\in \mathcal H_c[e^{\im\theta}z]$.
\end{remark}

More generally, given a space $\Sigma ^{k} (\R^3,\C)$  we write
  $\Sigma ^{k}_c=  P_c\Sigma ^{k}(\R^3,\C)$.
Notice that $P_c$ can be defined on $\Sigma^k$ even if $k$ is negative.

Throughout the paper, we say that a  pair $(p,q)$   is
{\it admissible} when \begin{equation}\label{admissiblepair}
2/p+3/q= 3/2\,
 , \quad 6\ge q\ge 2\, , \quad
p\ge 2. \end{equation}
The following theorem is our main result.
\begin{theorem}\label{thm:small en}    Assume $(\mathrm{H1})$--$(\mathrm{H4})$. Then there exist  $\epsilon _0 >0$ and $C>0$ such that  if we set  $\epsilon :=\| (u (0),v(0))\| _{H^1\times L^2}<\epsilon _0  $, then the  solution  $(u(t),v(t))$ of  \eqref{eq:NKGE} can be written uniquely for all times   and with $( \eta (t) , \xi (t)) \in
\mathcal{H}_c[z(t)]$ as
\begin{equation}\label{eq:small en1}
\begin{aligned}&   (u(t),v(t))= \Phi [z (t)]+  ({\eta} (t) ,{\xi} (t)) .
\end{aligned}
\end{equation}
 such that
there exist  a unique $J_0$, a
$\rho  _+\in [0,\infty ) ^{2n}$ with $\rho_{+J}=0$ for $J\neq J_0$,
s.t.
  $| \rho  _+ | \le C  \epsilon  $, and there exists $(u_+
,v_+ )$ with $\| (u_+ ,v_+ ) \| _{H^1\times L^2}\le C \epsilon
$
 and such that,  for $u_{lin}(u_+,v_+)(t) = K'_0(t)
u_+ + K_0(t) v_+$ where
 $K_0(t)=\frac{\sin (t \sqrt{-\Delta +m^2})}{\sqrt{-\Delta +m^2}} $,  we have
\begin{equation}\label{eq:small en3}
\begin{aligned}&     \lim_{t\to +\infty}\|  ( {\eta} (t ) , {\xi } (t )) -(u_{lin}(u_+,v_+)(t) , \frac{d}{dt}u_{lin}(u_+,v_+)(t)) \|_{H^1\times L^2 }=0      , \\&
 \lim_{t\to +\infty} |z_J(t)|  =\rho_{+J}  .
\end{aligned}
\end{equation}
Furthermore, we have $({\eta}  , {\xi} )= (\widetilde{\eta}  , \widetilde{\xi} ) + \mathbf{{A}}(t,x) $  s.t.
    for    all admissible pairs  $(p,q)$
 \begin{equation}\label{eq:small en2}
\begin{aligned}&     \| z \| _{L^\infty _t( \mathbb{R}_+ )}+ \|
(\widetilde{\eta}  , \widetilde{\xi} )   \| _{L^p_t( \mathbb{R}_+,W^{ \frac{{1}}{q}  - \frac{{1}}{p} ,q}_x  \times W^{ \frac{{1}}{q}  - \frac{{1}}{p} -1,q}_x)} \le C \epsilon   \ , \\& \| \dot z _J +\im  \omega _{ J }z_J \|  _{L^\infty _t( \mathbb{R}_+ ) } \le C  \epsilon ^2,\
\end{aligned}
\end{equation}
and s.t. $\mathbf{A}(t,\cdot )\in \Sigma ^2 \times \Sigma ^2$  for all $t\ge 0$, and
\begin{equation}\label{eq:small en4}
\begin{aligned}&      \lim_{t\to +\infty}\| \mathbf{A}(t,\cdot )   \|_{\Sigma ^2\times \Sigma ^2 }=0 .
\end{aligned}
\end{equation}

\end{theorem}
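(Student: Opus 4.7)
The plan is to run a modulation + Hamiltonian normal form + Fermi Golden Rule scheme analogous to \cite{bambusicuccagna,CM1,CT}, the essential novelty being to respect the gauge symmetry $\Phi[e^{\im\theta}z]=e^{\im\theta}\Phi[z]$ on the doubled discrete phase space $\C^{2n}$. The $4n$ linear constraints defining $\mathcal H_c[z]$ depend smoothly on $z$ and are transverse to $\{\Phi[z]+(\eta,\xi):(\eta,\xi)\in\mathcal H_c[z]\}$, so the implicit function theorem yields a unique smooth decomposition \eqref{eq:small en1} so long as $|z|+\|(\eta,\xi)\|_{H^1\times L^2}$ stays small. Differentiating in $t$ and using \eqref{eq:Omega}, the Hamiltonian structure of \eqref{eq:NKGE}, and Proposition \ref{prop:bddst}, one derives modulation equations of the form $\dot z_J=-\im\widetilde\omega_Jz_J+\mathcal Z_J(z,\eta,\xi)$ and $\partial_t(\eta,\xi)=\mathbb J(H+m^2)(\eta,\xi)+\mathcal N(z,\eta,\xi)$. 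A preliminary Darboux transformation (as in \cite{CM1}) flattens the $z$--dependence of the symplectic form on $\mathcal H_c[z]$ and produces genuine canonical coordinates in which the effective Hamiltonian is expanded in powers of $z,\bar z,\eta,\xi$.

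Next I carry out a Birkhoff normal form. The gauge symmetry implies that every monomial in the expansion has frequency $\sum_J(\alpha_J-\beta_J)\widetilde\omega_J$ matched to its $U(1)$ charge. Hypothesis (H3) supplies non-resonance for $|\alpha|+|\beta|\le 4N+6$ outside the gauge-trivial multi-indices, so $\sim N$ iterates of symplectic Birkhoff transformations remove every non-resonant pure--$z$ monomial of that order. The truncated pure--$z$ Hamiltonian then depends only on the $|z_J|^2$'s and contributes only frequency shifts to the $z$--ODE. What survives as the leading coupling to the continuous spectrum is exactly the resonant monomials supported at frequencies $|L|>m$ with $L\in\Lambda$; these produce forcing terms of the form $e^{\mp\im L t}G_L$ in the equation for $(\eta,\xi)$.

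Inserting this into Strichartz estimates, combined with a limiting-absorption and time-averaging argument for the inhomogeneous Klein--Gordon equation, I obtain a dissipation identity $\tfrac{d}{dt}P(|z|^2)=-\Gamma(z)+\text{error}$, with $P$ polynomial in the $|z_J|^2$, $\Gamma(z)\ge 0$ bounded below by the FGR quantity in (H4), and the error integrable in terms of $\|(\widetilde\eta,\widetilde\xi)\|$ in admissible Strichartz spaces. A bootstrap on the smallness of $z$ and of the Strichartz norms then closes \eqref{eq:small en2}; integrability of $\Gamma$ forces all but at most one of the $|z_J|^2$'s to vanish in the limit, pinning down $J_0$ and $\rho_+$ in \eqref{eq:small en3}. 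The field $\mathbf A(t,\cdot)$ is defined as the bound component of $(\eta,\xi)$ slaved to the surviving mode $z_{J_0}$, i.e.\ a polynomial in $z_{J_0}^k\bar z_{J_0}^{k'}$ with coefficients in $\Sigma^2$; it tends to $0$ in $\Sigma^2\times\Sigma^2$ because $|z_{J_0}(t)|^2-\rho_{+J_0}^2\to 0$.

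The main obstacle is the equivariant Birkhoff normal form step: one must check that (H3) supplies enough non-resonance on the \emph{doubled} frequencies $\widetilde\omega_J=\pm\omega_J$, that each normal-form step preserves both the $z$--dependent constraint $\mathcal H_c[z]$ and the gauge action, and that the FGR coefficient extracted from the surviving resonant terms agrees with the quantity assumed in (H4). None of these issues arises in the real-valued reduction of \cite{bambusicuccagna}, where the doubled phase space collapses to $\R^n$ and $\rho_+=0$ is automatic; this is precisely what makes the present result ``substantially different'' from \cite{bambusicuccagna}.
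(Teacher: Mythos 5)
Your outline tracks the paper's own proof: modulation decomposition by the implicit function theorem (Lemma \ref{lem:decomposition}), a Darboux step (Proposition \ref{prop:darboux}) to flatten the $z$--dependence of the symplectic form, a Birkhoff normal form (Proposition \ref{th:main}, quoted from \cite{CM1}) exploiting the gauge symmetry on the doubled phase space $\C^{2n}$, and then a Fermi--Golden--Rule dissipation estimate closed by a Strichartz/smoothing bootstrap (Propositions \ref{thm:mainbounds}--\ref{prop:mainbounds}). Two technical points deserve explicit attention, however, and one of your steps is substantively off.

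First, the auxiliary variable $\mathbf Z=(z_J\bar z_K)_{J\neq K}$ is load-bearing and should appear in the argument: it is $\mathbf Z$, not the individual $z_J$'s, that decays, and all the combinatorics of $M_{min}$, $\mathcal M_K(r)$ and the FGR quantity are organized around monomials $z_K\mathbf Z^{\mathbf m}$. Related to this is the cancellation mechanism in Lemma \ref{lem:KExp1}: after the Darboux step the $l=0$ terms in the expansion of $E^D$ disappear, and this is precisely what makes the subsequent normal form of \cite{CM1} apply verbatim. The paper singles this out as the key structural point, and it is worth naming as the place where the $\C^{2n}$--valued theory genuinely departs from the real-valued \cite{bambusicuccagna}.

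Second, your description of $\mathbf A$ and the reason it vanishes in $\Sigma^2\times\Sigma^2$ is not correct. In the paper $\mathbf A$ is not ``a polynomial in $z_{J_0}^k\bar z_{J_0}^{k'}$''; after pulling back to the original coordinates one has
\begin{equation*}
\mathbf A \;=\;(R[z']-B^{-\sigma_3/2})A\Theta\;+\;R[z'](\Xi'-A\Theta),
\end{equation*}
and the first piece is a \emph{compactly smoothed} multiple of $\Theta$ (via $R[z']-B^{-\sigma_3/2}=O(|z|^2)$ with values in $\Sigma^r$), while the second piece is controlled by \eqref{eq:propdar}, \eqref{eq:propbir} in terms of $|\mathbf Z|+\|\Theta\|_{\Sigma^{-r}}$. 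Thus $\mathbf A(t)\to 0$ because $\mathbf Z(t)\to 0$ and because $\|\Theta(t)\|_{\Sigma^{-r}}\to 0$ (local decay from scattering \eqref{eq:small en31}--\eqref{eq:w-conv}); the convergence $|z_{J_0}(t)|^2\to\rho_{+J_0}^2$ plays no role here and, since $\rho_{+J_0}$ is generally nonzero, a polynomial in $z_{J_0}$ with $\Sigma^2$ coefficients would \emph{not} tend to $0$. As stated, your reason for \eqref{eq:small en4} does not close; replacing it with the compactness/local-decay argument above fixes the gap.
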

As we have already mentioned above, this theorem extends \cite{SW3}, which dealt with $(u_0,v_0)$ both real valued,   $n=1$ in
(H2), and  with the further restriction that $3\omega _1>m$. It extends  also   \cite{bambusicuccagna}  in the sense that while \cite{bambusicuccagna}  allows even more
general spectra than (H2),  \cite{bambusicuccagna}  considers only $(u_0,v_0)$ both real valued. The fact that  $(u_0,v_0)$ both are real valued. and the convergence is to 0  allows
    in      \cite{bambusicuccagna} a simpler    choice    of coordinates  systems. This is key  because, like in \cite{bambusicuccagna,CM1,CT}, the main point in the proof of
Theorem   \ref{thm:small en}     consists in finding an appropriate system of coordinates where it is easier
  to extract an appropriate effective hamiltonian.
  Indeed, the purpose is to exploit  the hamiltonian
nature of the equation  to prove damping through dispersion of most of the $z_J$'s.

\noindent A simple example of this damping mechanism is given is the following, which we quote from \cite{CM2}. Consider the hamiltonian
\begin{equation}   \label{model1}    \begin{aligned} &
  \mathcal{H} (z,h )=
	   |z |^2+  \|    \nabla h \| ^{2}_{L^2}  +   |z|^2   \overline{{z}}    \int _{\R^2}
  G   (x) h  (x) dx  +   |z|^2   z
  \int _{\R^2}\overline{G}   (x)\overline{h } (x) dx        \end{aligned}
\end{equation}
 for which the equilibrium $(0,0)$ is asymptotically stable, as we   sketch  heuristically now.
We have
   \begin{align}    &
  \im \dot h = - \Delta   h + |z|^2   z  \overline{G} \text{  and}
 \label{intstr-11}  \\&
\im \dot z
=  z   +   2 |z|^2
\int _{\R^2} h (x)
  {G}(x)  dx   +             z^2
\int _{\R^2} \overline{h}(x)
  \overline{{G}}(x)  dx   . \label{intstr11}
\end{align}
If we  set
\begin{equation*}\label{intstr-1}
   h = - |z|^2   zR ^{+}_{-\Delta}(1)\overline{G} +g  \text{   for  }R ^{+}_{-\Delta}(1) = \lim  _{\varepsilon \to 0^+} R  _{-\Delta}(1+\im \varepsilon )
\end{equation*}
      and we substitute in \eqref{intstr11},   ignoring   $g$  since it is smaller,  we get
\begin{equation*}    \begin{aligned} &
\im \dot z
=  z   -  2  |z|^4 z
\int _{\R^2}
  {G}  R ^{+}_{-\Delta}(1)\overline{G}  dx  -     |z|^4 z
\int _{\R^2}
  \overline{{G}}   R ^{-}_{-\Delta}(1) {G} dx   .
\end{aligned}  \end{equation*}
Recall $R ^{\pm}_{-\Delta}(\lambda ) = P.V. (-\Delta- \lambda) ^{-1} \pm \im \pi \delta (-\Delta- \lambda)$  for any $\lambda >0$.
Multiplying by $\overline{z}$ and taking imaginary part we get
\begin{equation} \label{intstr12}  \begin{aligned} &
 \frac{d}{dt} |z|^2= - 2\pi \mathfrak{c}  |z|^8 \text{  with }
\mathfrak{c} =\int _{\R^2}
  {G}   \delta (-\Delta  -1) \overline{G}  dx \ge 0   .
\end{aligned}  \end{equation}
We conclude that $\mathfrak{c}\ge 0$ by the following formula, see ch.2 \cite{friedlander}:   \begin{equation} \label{eq:termc}  \begin{aligned} &   \mathfrak{c} =   \frac{1}{2 } \int _{|\xi |= 1} |\widehat{G } (\xi
)|^2 d\sigma (\xi ).
\end{aligned}\end{equation}
 Assuming   $\mathfrak{c} >0$ (the meaning of hypothesis (H4) is basically this), which is generically true,
  then   \eqref{intstr12} yields the explicit formula
\begin{equation*}
    |z(t)| ^2= \frac{|z(0)| ^2}{(1+6 \pi \mathfrak{c} |z(0)| ^2t) ^{\frac{1}{3}}} .
\end{equation*}
In the meantime, $h$ scatters because one can apply Strichartz estimates to  \eqref{intstr-11}.

 So the work in all the papers  \cite{bambusicuccagna,CM1,CT}, as well as here, consists
 in finding a coordinate system where \eqref{eq:NKGE}
 has hamiltonian that, up to negligible  error terms, is  similar to \eqref{model1}. In \cite{bambusicuccagna} this is simpler
 because there the attractor set is formed just by the vacuum solution of \eqref{eq:NKGE}. Here
 as well as in  \cite{CM1,CT} the attractor is more complex and leads to   solutions with complicated trajectories
 around the attractor, as shown in
  \cite{TY3}, which treats for the NLS  a  similar problem but only for  $n=2$ in  (H2).

  As discussed
  in  \cite{CM1},  the approach in  \cite{TY3} involving    guesses on  the trajectory of a solution, the turns of the solution away  from unstable
  standing waves and, usually,  its final convergence either to 0 or to a stable standing wave, appears
  a considerably difficult task under our hypothesis (H2), which is
  much more complex combinatorially than the situation in \cite{TY3}. In this paper we frame the problem as in  \cite{CM1}
  and extend to the NLKG equation the NLS result obtained in  \cite{CM1}, exactly as in  \cite{CT}  the result of
   \cite{CM1}  has been extended to   Dirac equations. It turns out that the NLKG presents no significant
   new problems with respect to the NLS. In this sense this    paper  contains applications of ideas introduced already in  \cite{CM1}.
   However, due to the importance of the NLKG, we think   the result in the present paper
   significant nonetheless. This in view of the fact that, apart from
   \cite{SW3,bambusicuccagna} and \cite{AS},  not much has been written about the asymptotic stability of
   standing waves of the   NLKG. This in contrast to the rather extensive literature on asymptotic stability of standing waves  of the NLS, e.g.   \cite{SW1,SW2,PiW}, \cite{BP1,BP2}, \cite{TY1}--\cite{TY4}, \cite{SW4}, \cite{zhousigal,GW1,GW2}, \cite{CM,Cu2}  and therein,
   and various other papers on the problem treated in  \cite{CM1}  about the NLS, e.g. \cite{GNT,NPT,N}. There are even some papers on asymptotic stability for the nonlinear Dirac equation, e.g. \cite{PelinovskyStefanov,boussaidcuccagna,CT}.

Now we give a quick description of the proof  of Theorem \ref{thm:small en}. Following ideas from \cite{GNT}, subsequently elaborated in
\cite{CM1}, we find a natural system of coordinates  $(z_1, ..., z_{2n},\Xi   ) $  for \eqref{eq:NKGE}, which comprise both discrete modes $z_J $ for $J=1,...2n$  and   continuous modes   $ \Xi $. Since one of the discrete modes possibly does not decay, early in the paper a new auxiliary variable
$\mathbf{Z}$ is introduced.  In the proof it is shown that all the components of $\mathbf{Z}$ decay to 0.
 The components of $\mathbf{Z}$ are the products $z_J \overline{z}_K$  with $J\neq K$. The role of the discrete mode $z$   in
\eqref{model1}  is taken by $\mathbf{Z}$ in  the hamiltonian of \eqref{eq:NKGE}. Some elementary but essential lemmas about monomials
in the variable $\mathbf{Z}$ are then introduced in Sect. \ref{subsec:comb}.

\noindent In the simplified setup of \cite{bambusicuccagna,SW3} the initial  coordinates are Darboux for the symplectic structure
in the problem, but not here. Like in \cite{Cu2,CM1} here  we need instead to  change coordinates to reduce to Darboux coordinates.  Like
in \cite{CM1}  all the coordinate changes in this paper satisfy \begin{equation} \label{eq:rel1}\begin{aligned} &   z'_1 = z_1 + O( z \Xi ) + O(\Xi ^2) + \sum _{J\neq K}O( z_J z_K )  , ...,  z'_{2n} = z_{2n} + O( z \Xi ) + O(\Xi ^2)  + \sum _{J\neq K}O( z_J z_K ), \\& \Xi ' =\Xi  + O( z \Xi ) + O(\Xi ^2) + \sum _{J\neq K}O( z_J z_K ) .  \end{aligned}
\end{equation}
 We take an appropriate expansion of the energy in terms of these coordinates emphasizing  $(z,\mathbf{Z},\Xi)$. Like in \cite{CM1}, in the Darboux coordinates  an important cancelation
occurs in the energy, and specifically in the 2nd line of \eqref{eq:enexp10} the summations start  from  $l=1$
and not from $l=0$ like in the analogous expansion in \eqref{eq:enexp1}. We provide a simplified explanation with respect to
\cite{CM1} for this cancelation
in   the course of
Lemma \ref{lem:KExp1}. The hamiltonian  is now    of the same type  of that in \cite{CM1} so applying Theorem 5.9 \cite{CM1}
we obtain a hamiltonian somewhat similar to \eqref{model1}. There is a mechanism of nonlinear discrete continuous interaction
similar to that sketched for \eqref{model1} that yields the stabilization mechanism of Theorem
\ref{thm:small en}, with $\mathbf{Z}(t) \stackrel{t \to \infty}{\rightarrow}0  $  and scattering of $\Xi$.
Thanks to the structure of the coordinate changes \eqref{eq:rel1} this behavior  transfers  from one coordinate system to the other and yields the
decay of all the $z_J(t)$ except for at most one  and the scattering of the continuous components.

\noindent For the scattering of the continuous modes we use a number of Strichatz and smoothing estimates stated in \cite{bambusicuccagna}, in part proved there and in part
gathered from the literature,  mainly from \cite{danconafanelli}, but see  \cite{bambusicuccagna} for further references.

\noindent Finally, one thing that we do not accomplish here and which was proved instead in \cite{CM1}  is   the instability of the excited
states.

\section{Notation, coordinates and resonant sets} \label{section:set up}

\subsection{Notation}
\label{subsec:notation}
\begin{itemize}
\item We denote by $\N =\{ 1,2,...\}$  the set of natural numbers  and  set $\N_0= \N\cup \{ 0\}  $.
\item
We denote $z=( z_1,\dots ,z _{2n})$, $|z|:=\sqrt{\sum_{j=1} ^{2n}|z_j|^2}$.

\item Given a Banach space $X$, $v\in X$ and $\delta>0$ we set
$
D_X(v,\delta):=\{ x\in X\ |\ \|v-x\|_X<\delta\}.
$

\item
Let $A$ be an operator on $L^2(\R^3)$.
Then $\sigma_p(A)\subset \C$ is the set of eignvalues of $A$ and $\sigma_e(A)\subset \C$ is the   essential spectrum of $A$.

\item
For $m<0$ and integer   we set $\Sigma ^m=(\Sigma  ^{-m})'$.  Notice  that the  spaces   $\Sigma ^r $
can be equivalently defined using   for $r\in \R$    the norm
 $
      \| u \| _{\Sigma ^r}  :=  \|  ( 1-\Delta +|x|^2)   ^{\frac{r}{2}} u \| _{L^2}        .   $

\item
For $f:\C^{2n }\to \C$ we set
$\partial_{JA} f(z):=\frac{\partial}{\partial z_{J A}}f(z)$,  $\partial _{ J }:=\partial _{z_J}$  and  $\partial _{\overline{J} }:=\partial _{\overline{z}_J}$.
Here as customary   $\partial _{z_J}  = \frac 12 (\partial _{JR}-\im  \partial _{JI} )$ and    $\partial _{\overline{z}_J}  = \frac 12 (\partial _{JR}+\im  \partial _{JI} )$.

\item Occasionally we use a single  index $\ell =J, \overline{J}$. To define $\overline{\ell}$  we use the convention $\overline{\overline{J}}=J$.
We will also write   $z_{\overline{J}}=\overline{z}_J$.

\item  We will consider vectors $z=(z_1, ..., z _{2n})\in \C^{2n}$ and  for  vectors $\mu , \nu \in (\N \cup \{ 0 \} ) ^{2n}$ we set $ z^\mu \overline{z}^\nu  := z_1^{\mu _1} ...z_{2n}^{\mu _{2n}}\overline{z}_1^{\nu _1} ...\overline{z}_{2n}^{\nu _{2n}}$.  We will set $|\mu |=\sum _j \mu _j$.

\item   We have  $dz_J  =dz_{JR}+\im dz_{JI}$,    $d\overline{z}_J  =dz_{JR}-\im dz_{JI}$.

\item  Given two Banach spaces $X$ and $Y$ we denote by $\mathcal{L}(X,Y)$ the   space
of bounded linear operators $X\to Y$ with the norm of the uniform operator topology.

\item We set $\mathbf L^2:=L^2\times L^2$, $\mathbf \Sigma^r:=\Sigma^r\times \Sigma^r$ and $\mathbf H^s:=H^s\times H^s$.

\item
$\sigma_3=\begin{pmatrix} 1 & 0 \\ 0 & -1 \end{pmatrix}$.

\end{itemize}

\subsection{Coordinates}
\label{subsec:coord}
In the following, we fix $r_0>0$ sufficiently large.

\noindent
A preliminary step for the choice of coordinates is the following
standard ansatz.

\begin{lemma}\label{lem:decomposition}
There exist $c _0 >0$, and $C>0$ s.t.  for all $(u,v) \in H^1\times L^2$   with $\| (u,v) \|_{H^1\times L^2}<c  _0 $, there exists a unique pair $(z ,\widetilde \Xi )\in   \C^{2n }\times  ( (H^1\times L^2) \cap \mathcal{H}_{c}[z])$
s.t.
\begin{equation}\label{eq:decomposition1}
\begin{aligned} &
(u,v)=\Phi[z] +\widetilde \Xi   \\&\text{ with }  |z | +\| \widetilde \Xi \|_{H^1\times L^2}\le C \| (u,v)\|_{H^1\times L^2} .
\end{aligned}
\end{equation}
The map  $u \to (z ,\widetilde \Xi )$  is $C^\omega (D _{H^1}(0, c _0 ),
 \C ^{2n} \times   H^1\times L^2  )$,  and  satisfies  the   gauge property: \begin{equation}\label{eq:decomposition3}
\begin{aligned} &
 \text{$z(e^{\im \vartheta} u,e^{\im \vartheta} v )=e^{\im \vartheta} z( u,v)
$,  and    $\Psi (e^{\im \vartheta} u,e^{\im \vartheta} v )=e^{\im \vartheta} \Psi ( u,v)$ } .
\end{aligned}
\end{equation}

\end{lemma}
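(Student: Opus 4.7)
My plan is to set the decomposition up as a standard implicit function theorem problem on the $4n$ real parameters $z_{JR},z_{JI}$, treating $(u,v)$ as a parameter in $H^1\times L^2$. Define
\begin{equation*}
F_{JA}(z,(u,v)):=\Omega\bigl(\partial_{JA}\Phi[z]\,\bigl|\,(u,v)-\Phi[z]\bigr),\qquad J=1,\ldots,2n,\ A=R,I.
\end{equation*}
A pair $(z,\widetilde\Xi)$ with $\widetilde\Xi=(u,v)-\Phi[z]$ lies in $\mathcal H_c[z]$ precisely when all $F_{JA}=0$, so the existence and uniqueness of the decomposition reduces to the solvability of the system $F(z,(u,v))=0$ in $z$ near $z=0$, with $(u,v)$ close to $0$. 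By Proposition~\ref{prop:bddst} the map $z\mapsto\Phi[z]$ is $C^\omega$ from $D_{\C^{2n}}(0,b_0)$ into $H^1\times L^2$, so $F$ is $C^\omega$ jointly. Clearly $F(0,(0,0))=0$.

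The core step is to check that the Jacobian $D_zF(0,(0,0))$ is invertible. Since $(u,v)-\Phi[z]$ vanishes at $z=0$, $(u,v)=0$, one gets $\partial_{KB}F_{JA}\bigl|_{(0,0)}=-\Omega(\partial_{JA}\Phi[0]\,|\,\partial_{KB}\Phi[0])$. From $Q_{Jz}=z\phi_J+O(z^3)$ and $\widetilde\omega_{Jz}\to\widetilde\omega_J$ one reads off $\partial_{JR}\Phi_J[0]=(\phi_J,\im\widetilde\omega_J\phi_J)$ and $\partial_{JI}\Phi_J[0]=(\im\phi_J,-\widetilde\omega_J\phi_J)$, while $\partial_{KB}\Phi_J[0]=0$ for $K\ne J$. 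Using $\Omega(U|V)=2(\langle U_2|V_1\rangle-\langle U_1|V_2\rangle)$ and the orthonormality $\langle\phi_J|\phi_K\rangle=\delta_{JK}$, together with the reality of the $\phi_J$, one finds that the matrix is block-diagonal in $J$ with each $2\times2$ block equal to $\begin{pmatrix}0 & -4\widetilde\omega_J\\ 4\widetilde\omega_J & 0\end{pmatrix}$. Since $\widetilde\omega_J\ne 0$, the Jacobian is invertible, and the analytic implicit function theorem yields a unique real-analytic map $(u,v)\mapsto z(u,v)\in\C^{2n}$ on a neighborhood $D_{H^1\times L^2}(0,c_0)$ of $0$, with $z(0,0)=0$. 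Then $\widetilde\Xi(u,v):=(u,v)-\Phi[z(u,v)]$ is automatically in $\mathcal H_c[z(u,v)]$, and the bound $|z|+\|\widetilde\Xi\|_{H^1\times L^2}\le C\|(u,v)\|_{H^1\times L^2}$ is the Lipschitz estimate at $0$ for an analytic map vanishing at $0$.

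For the gauge property, one argues by uniqueness. Given $(u,v)$ close to $0$ and $\theta\in\R$, set $(u',v'):=(e^{\im\theta}u,e^{\im\theta}v)$, $z':=e^{\im\theta}z(u,v)$, $\widetilde\Xi':=e^{\im\theta}\widetilde\Xi(u,v)$. The identity $\Phi[e^{\im\theta}w]=e^{\im\theta}\Phi[w]$ gives $(u',v')=\Phi[z']+\widetilde\Xi'$ directly. Differentiating this identity in $w_{JA}$ at $w=z$ shows that the real span of $\{\partial_{JR}\Phi[e^{\im\theta}z],\partial_{JI}\Phi[e^{\im\theta}z]\}$ coincides with $\{e^{\im\theta}\partial_{JR}\Phi[z],e^{\im\theta}\partial_{JI}\Phi[z]\}$; combined with the U(1)-invariance $\Omega(e^{\im\theta}X|e^{\im\theta}Y)=\Omega(X|Y)$, this gives $\widetilde\Xi\in\mathcal H_c[z]\iff e^{\im\theta}\widetilde\Xi\in\mathcal H_c[e^{\im\theta}z]$. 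Hence $(z',\widetilde\Xi')$ is a valid decomposition of $(u',v')$, and by uniqueness $z(u',v')=z'=e^{\im\theta}z(u,v)$.

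I expect the main obstacle to be a bookkeeping subtlety rather than a conceptual difficulty: keeping track of complex versus real differentiations and signs in the Jacobian computation, and verifying that the symplectic pairing evaluated on the pairs $(\phi_J,\im\widetilde\omega_J\phi_J)$ and $(\im\phi_J,-\widetilde\omega_J\phi_J)$ indeed yields the claimed nondegenerate block structure. Once this computation is in place, everything else is formal: the IFT gives analyticity and the size bound, and the symmetry argument gives the gauge property.
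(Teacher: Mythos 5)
Your argument is correct and follows essentially the same route as the paper's own proof: define $F_{JA}(u,v,z)=\Omega\bigl(\partial_{JA}\Phi[z]\,\bigl|\,(u,v)-\Phi[z]\bigr)$, check nondegeneracy of $D_zF$ at the origin, apply the analytic implicit function theorem, and deduce the gauge property by uniqueness together with the gauge covariance of $\Phi[\cdot]$ and of $\mathcal H_c[\cdot]$. One detail worth flagging, in your favor: the paper's displayed expansion $F_{JR}(0,0,z)=-4\widetilde\omega_J z_{JR}+O(z^3)$ would make the Jacobian diagonal, but since $\Omega(\partial_{JR}\Phi[0]\,|\,\partial_{JR}\Phi[0])=0$ while $\Omega(\partial_{JR}\Phi[0]\,|\,\partial_{JI}\Phi[0])=4\widetilde\omega_J$ (consistent with the computation of $\Omega_0$ later in Sect.~\ref{sec:darboux}), the correct linear term is $-4\widetilde\omega_J z_{JI}$, and your antisymmetric block $\left(\begin{smallmatrix}0&-4\widetilde\omega_J\\ 4\widetilde\omega_J&0\end{smallmatrix}\right)$ is the accurate one. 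Since both matrices are invertible for $\widetilde\omega_J\neq 0$ the conclusion is unchanged, but your computation is the careful version.
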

\proof
We consider  for $J=1,..., 2n$ and  $A=R,I$ the functions
\begin{equation*}
\begin{aligned}&
  F _{JA} ( u,v, z ):=\Omega\big (\partial _{JA} \Phi [z ] | ( u,v)- \Phi [z ] \big  )  .
\end{aligned}
\end{equation*}
The $ F _{JA}$ is analytic in
$\mathbf L^2 \times D_{\C  ^{2n}} (0,  b _0)$  for   the $b _0$
in Def. \ref{def:contsp}.
 We have \begin{equation}\label{eq:ion31}
\begin{aligned}&
 F _{JR} (0, 0, z) = -4  \widetilde{\omega} _{J  }  z_{JR}+O(z^3)  , \, F _{JI} (0, 0, z) = - 4\widetilde{\omega }_{J 0}  z_{JI}+O(z^3).
\end{aligned}
\end{equation}
    The  map $(u,v)\to z$ in   $C^\omega ( D _{\mathbf L^2}(0, c _0)  ,
 \C  ^{2n}   )$ for a $c _0>0$ sufficiently small  is obtained by implicit function theorem.  All the other statements
 are equally elementary. For a proof in a similar set up see \cite{CM1}.
   \qed

We introduce
\begin{equation}\label{eq:eqopB}
 B:= \sqrt{-\Delta +V+m^2} \, , \quad  {B}^{-\frac{\sigma _3}{2}} =    \begin{pmatrix} {B}^{-\frac{1}{2}} &
0 \\
0 & B^{\frac{1}{2}} .
 \end{pmatrix}
\end{equation}

We need a system of independent coordinates, which
 the $(z,\Psi  )$
in   \eqref{eq:decomposition1}  are not. The following lemma is used to complete (later in Lemma \ref{lem:systcoo})  the $z$ with a continuous coordinate.

\begin{lemma} \label{lem:contcoo}
There exists
$d _0>0$ such that there exist $C_{JA}\in C^\omega(D_{\C^{2n}}(0,d_0),\Sigma^{r_0})$ for $J=1,\cdots,2n$ and $A=R,I$ such that
\begin{itemize}
\item[\textup{(i)}]
For $R[z ]$ defined by
\begin{align}\label{eq:contcoo21}
R[z]\Xi = {B}^{-\frac{\sigma _3}{2}}\Xi + \sum _{J'=1}^{2n}\sum _{A'=R,I} \< C _{J'A'}[z]|\Xi \>  \partial_{J'A'}\Phi [0],
\end{align}
we have $R[z ]: \mathbf H ^{\frac{1}{2}}_c\to (H ^{1}\times L^2)\cap \mathcal{H}_c[z ]$ and
\begin{align}\label{eq:contcoo22}
 \left.  {{B}}^{ \frac{\sigma _3}{2}}\circ (P_c\times P_c)\right|_{\mathcal{H}_c[z ]}=R[z ]^{-1}.
\end{align}
\item[\textup{(ii)}]
$ \| C _{JA}[z]  \|_{\Sigma ^r}  \le c_r | z|^2$ for all $r\geq 0$.
\item[\textup{(iii)}]
$R[e^{\im \theta}z]=e^{\im \theta}R[z]e^{-\im \theta}$ for all $\theta \in\R$.
\end{itemize}
\end{lemma}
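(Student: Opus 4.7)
The $4n$ unknown functions $C_{JA}[z]$ are fixed by requiring $R[z]\Xi\in\mathcal H_c[z]$ for every $\Xi\in\mathbf H^{1/2}_c$, i.e.\ $\Omega(\partial_{KB}\Phi[z]\,|\,R[z]\Xi)=0$ for all $(K,B)$. Expanding this, using $\Omega(U|V)=2\langle\mathbb J^{-1}U|V\rangle$ together with the self-adjointness of $B^{-\sigma_3/2}$ with respect to the real inner product \eqref{eq:bilf}, and recalling that $P_c\times P_c$ commutes with both $B^{-\sigma_3/2}$ and $\mathbb J^{-1}$, the condition is equivalent to the $4n\times 4n$ linear system
\[
\sum_{J'A'}\mathfrak a_{KB,J'A'}[z]\,C_{J'A'}[z]
= -2\,(P_c\times P_c)\,B^{-\sigma_3/2}\mathbb J^{-1}\partial_{KB}\Phi[z],
\]
where $\mathfrak a_{KB,J'A'}[z]:=\Omega\bigl(\partial_{KB}\Phi[z]\,|\,\partial_{J'A'}\Phi[0]\bigr)$. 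I would seek the $C_{J'A'}[z]$ in $\Sigma^{r_0}\times\Sigma^{r_0}$, noting that only their $(P_c\times P_c)$-components influence $R[z]$ and one may therefore choose them in $P_c\Sigma^{r_0}\times P_c\Sigma^{r_0}$.

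The critical step, and the main obstacle, is the invertibility of $\mathfrak A[z]:=(\mathfrak a_{KB,J'A'}[z])$ on a small disk $D_{\mathbb C^{2n}}(0,d_0)$. At $z=0$ Proposition \ref{prop:bddst} gives $\partial_{JR}\Phi_J[0]=(\phi_J,\im\widetilde\omega_J\phi_J)$ and $\partial_{JI}\Phi_J[0]=(\im\phi_J,-\widetilde\omega_J\phi_J)$, and a direct computation using $\Omega((f_1,f_2)|(g_1,g_2))=2\Re\int(f_2\bar g_1-f_1\bar g_2)$ together with the orthonormality of the real eigenfunctions $\phi_j$ identifies $\mathfrak A[0]$ with a block-diagonal matrix whose $2\times 2$ blocks have determinants $16\omega_J^2>0$. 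This is essentially the symplectic non-degeneracy of the tangent space to the discrete modes at the origin. Analyticity of $\mathfrak A[z]$ in $z$, granted by Proposition \ref{prop:bddst}, then propagates invertibility to $D_{\mathbb C^{2n}}(0,d_0)$, and Cramer's rule yields $C_{JA}\in C^\omega(D_{\mathbb C^{2n}}(0,d_0),\Sigma^{r_0}\times\Sigma^{r_0})$. The quadratic bound (ii) is now immediate: $(P_c\times P_c)B^{-\sigma_3/2}\mathbb J^{-1}\partial_{JA}\Phi[0]=0$ because $B^{-\sigma_3/2}$ and $\mathbb J^{-1}$ preserve the discrete subspace which $P_c$ annihilates, while $\partial_{JA}\Phi[z]-\partial_{JA}\Phi[0]=O(|z|^2)$ in any $\Sigma^r$ by the cubic vanishing of $q_{Jz}$ in Proposition \ref{prop:bddst}, so that $\|C_{JA}[z]\|_{\Sigma^r}\le c_r|z|^2$.

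It remains to verify (i) and (iii). The identity $B^{\sigma_3/2}(P_c\times P_c)R[z]\Xi=\Xi$ on $\mathbf H^{1/2}_c$ reduces, after commuting $B^{\sigma_3/2}$ past $P_c\times P_c$, to $B^{\sigma_3/2}B^{-\sigma_3/2}=\mathrm{id}$ together with $(P_c\times P_c)\partial_{JA}\Phi[0]=0$. Conversely, for $U\in(H^1\times L^2)\cap\mathcal H_c[z]$, set $U':=R[z]\bigl(B^{\sigma_3/2}(P_c\times P_c)U\bigr)$; then $U-U'$ lies in $\mathrm{span}\{\partial_{JA}\Phi[0]\}\cap\mathcal H_c[z]$, and testing any $\sum c_{JA}\partial_{JA}\Phi[0]$ in this intersection against $\Omega(\partial_{KB}\Phi[z]\,|\,\cdot)$ produces $\mathfrak A[z]c=0$, so $c=0$ and $U=U'$. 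Finally, (iii) is a uniqueness argument: the identity $\Phi[e^{\im\theta}z]=e^{\im\theta}\Phi[z]$, combined with the commutation of multiplication by $e^{\im\theta}$ with both $B^{-\sigma_3/2}$ and $P_c\times P_c$, shows that $e^{\im\theta}R[z]e^{-\im\theta}$ satisfies the same defining system as $R[e^{\im\theta}z]$, so the two operators coincide.
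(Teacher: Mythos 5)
Your proposal is correct and follows essentially the same route as the paper: set up the $4n\times 4n$ linear system $\mathfrak A[z]C=-2(P_c\times P_c)B^{-\sigma_3/2}\mathbb J^{-1}\partial\Phi[z]$, invert by nondegeneracy at $z=0$, read off the quadratic bound from the $O(|z|^2)$ vanishing of the right-hand side, and obtain (i) and (iii) from uniqueness of the solution. You actually supply more detail than the paper does (explicit diagonalization of $\mathfrak A[0]$ into $2\times 2$ blocks of determinant $16\omega_J^2$, the observation that $\mathrm{span}\{\partial_{JA}\Phi[0]\}_{J\le 2n,A=R,I}$ is the full $4n$-dimensional discrete subspace, which makes the surjectivity argument go through), whereas the paper appeals to "uniqueness of the solution of \eqref{eq:contcoo5}" rather tersely and defers (iii) to an appendix computation.
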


\begin{proof}
We define $C_{JA}'[z]\Xi \in \R$ ($J=1,\cdots,2n, A=R,I$) to be the unique solution of the system
\begin{align}\label{eq:contcoo5}
\Omega\(\partial_{JA}\Phi[z]\left |{B}^{-\frac{\sigma _3}{2}}\Xi + \sum _{J'=1}^{2n}\sum _{A'=R,I}  (C _{J'A'}'[z]\Xi     )   \partial_{J'A'}\Phi [0]\right . \)=0,\ J=1,\cdots,2n,A=R,I,
\end{align}
where  $\partial _{J R }\Phi [0] = ( \phi _J, \im \widetilde{\omega} _J\phi _J)$  and $\partial _{J I }\Phi [0] = \im ( \phi _J, \im \widetilde{\omega} _J\phi _J)=\im\partial _{J R }\Phi [0] $.
Rewriting \eqref{eq:contcoo5}, we have
\begin{align}\label{eq:contcoo6}
\sum _{J'=1}^{2n}\sum _{A'=R,I} \Omega(\partial_{JA}\Phi[z]|\partial_{J'A'}\Phi[0])C_{J'A'}'[z]\Xi=-\Omega(\partial_{JA}\Phi[z]|B^{-\frac{\sigma_3}{2}}\Xi)
\end{align}
Since the coefficient matrix is invertible for $z$ sufficiently small, we can solve \eqref{eq:contcoo5}.
Furthermore, from \eqref{eq:contcoo6}, we see that $C'_{J'A'}[z]\Xi$ can be written as $\<C_{JA}[z]|\Xi\>$ for some $C_{JA}[z]\in \Sigma^r$ for arbitrary $r$.
The real analyticity with respect to $z$ follows from the real analyticity of $\Phi[z]$.
Notice that since $|\Omega(\partial_{J'A'}\Phi[z]|B^{-\frac{\sigma_3}{2}}\Xi)|\lesssim |z|^2$, we have (ii).

\noindent Next we define  $R[z]$  by   \eqref{eq:contcoo21}  using these $C_{JA}[z] $.
 From \eqref{eq:contcoo5} we obtain $R[z ]: \mathbf H ^{\frac{1}{2}}_c\to (H ^{1}\times L^2)\cap \mathcal{H}_c[z ]$.
Furthermore, \eqref{eq:contcoo22}   follows from the form of $R[z]$ and the uniqueness of the solution of \eqref{eq:contcoo5}.

We will postpone (iii) to the appendix of this paper.

\end{proof}

By $R[z]$ given in Lemma \ref{lem:contcoo}, we have a system of independent coordinates which we needed.

\begin{lemma} \label{lem:systcoo}  For the $d  _0>0$ of Lemma \ref{lem:contcoo}
the map $\mathcal F:(z ,\eta,\xi  )\mapsto U:=(u,v)$  defined  for  $\Xi =  ( \eta  ,  \xi )$
by
\begin{equation} \label{eq:systcoo1}\begin{aligned}
&
\mathcal F(z,\Xi)=\Phi [z]+  R[z ]    \Xi     \text{ for $(z, \Xi )\in D_{\C ^{2n }}(0 , d _0) \times   \mathbf H ^{\frac{1}{2}}_c  $}\end{aligned}
\end{equation}
is  with values in   $H^1 \times  L^2 $  and  is   $C^\omega$.
Furthermore, there is a $d_1>0$ such that    for $(z, \Xi )\in B_{\C^{2n }}(0 , d _1) \times  B_{\mathbf H ^{\frac{1}{2}}_c }(0 , d _1) $
the above map is a  diffeomorphism and
\begin{equation} \label{eq:coo11}
  |z| +\| \Xi\| _{\mathbf H ^{\frac{1}{2}}_c} \sim   \| \mathcal F(z,\Xi) \| _{ H^1 \times  L^2}.
\end{equation}
Finally, setting $\Psi[z,\Xi]:=\Phi[z]+(R[z]-B^{-\frac{\sigma_3}{2}})\Xi$, we have $\Psi \in C^\omega(D_{\C^{2n}}\times \mathbf \Sigma_c^{-r_0}, \mathbf \Sigma^{r_0})$.

\end{lemma}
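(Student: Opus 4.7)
The analyticity of $\mathcal{F}:D_{\C^{2n}}(0,d_0)\times \mathbf H^{1/2}_c\to H^1\times L^2$ is immediate from the fact that $\Phi[z]$ is real analytic with values in $\mathbf\Sigma^r$ (Proposition \ref{prop:bddst}), and that $R[z]\Xi=B^{-\sigma_3/2}\Xi+\sum_{J,A}\langle C_{JA}[z]|\Xi\rangle\,\partial_{JA}\Phi[0]$, where $C_{JA}[\cdot]\in C^\omega(D_{\C^{2n}}(0,d_0),\Sigma^{r_0})$ and $\partial_{JA}\Phi[0]\in \mathbf\Sigma^{r_0}$. The key mapping property is that $B^{-\sigma_3/2}=\mathrm{diag}(B^{-1/2},B^{1/2}):\mathbf H^{1/2}\to H^1\times L^2$ is bounded since $B\sim\sqrt{-\Delta}$ at high frequency.

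For the diffeomorphism statement, the plan is to apply the inverse function theorem at the origin. One computes
\begin{equation*}
D\mathcal F(0,0)\,[(w,\eta)]=\sum_{J,A} w_{JA}\,\partial_{JA}\Phi[0]+B^{-\sigma_3/2}\eta,
\end{equation*}
where I used $C_{JA}[0]=0$ from Lemma \ref{lem:contcoo}(ii). The span of the vectors $\partial_{JA}\Phi[0]=(\phi_J,\im\widetilde\omega_J\phi_J)$ (and their imaginary multiples) is transverse to $\mathbf H^{1/2}_c$ under $B^{-\sigma_3/2}$, because the latter commutes with $P_c\times P_c$ and hence maps into $H^1_c\times L^2_c\subset\mathcal H_c[0]$, while the discrete span lies in the symplectic complement. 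Thus $D\mathcal F(0,0)$ realizes the topological direct sum decomposition $H^1\times L^2=\mathrm{span}\{\partial_{JA}\Phi[0]\}\oplus(H^1_c\times L^2_c)$ and is an isomorphism. The inverse function theorem then supplies $d_1\in(0,d_0)$ for which $\mathcal F$ is a $C^\omega$ diffeomorphism on the prescribed neighborhood.

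For the norm equivalence \eqref{eq:coo11}, I combine two bounds. From Lemma \ref{lem:contcoo}(ii) and the mapping property of $B^{-\sigma_3/2}$,
\begin{equation*}
\|R[z]\Xi\|_{H^1\times L^2}\le \|B^{-\sigma_3/2}\Xi\|_{H^1\times L^2}+C|z|^2\|\Xi\|_{\mathbf H^{1/2}}\lesssim \|\Xi\|_{\mathbf H^{1/2}_c},
\end{equation*}
while $\|\Phi[z]\|_{H^1\times L^2}\lesssim |z|$ from Proposition \ref{prop:bddst}, giving the upper bound. For the reverse inequality I use \eqref{eq:contcoo22}: projecting $\mathcal F(z,\Xi)-\Phi[z]=R[z]\Xi\in\mathcal H_c[z]$ under $B^{\sigma_3/2}\circ(P_c\times P_c)$ recovers $\Xi$, and using the decomposition from Lemma \ref{lem:decomposition} one recovers $z$ from $U=\mathcal F(z,\Xi)$. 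Both operations are bounded (for $|z|$ small), so $|z|+\|\Xi\|_{\mathbf H^{1/2}_c}\lesssim\|U\|_{H^1\times L^2}$.

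Finally, for $\Psi[z,\Xi]=\Phi[z]+(R[z]-B^{-\sigma_3/2})\Xi=\Phi[z]+\sum_{J,A}\langle C_{JA}[z]|\Xi\rangle\,\partial_{JA}\Phi[0]$, one observes that the pairing $\langle C_{JA}[z]|\Xi\rangle$ makes sense as a duality pairing between $\Sigma^{r_0}$ and $\Sigma^{-r_0}$, so that $\Psi$ extends analytically to $\mathbf\Sigma_c^{-r_0}$. Since each summand has a coefficient lying in $\mathbf\Sigma^{r_0}$ (Schwartz) and depending analytically on $z$, while $\Phi[z]\in\mathbf\Sigma^{r_0}$ is itself analytic, we obtain $\Psi\in C^\omega(D_{\C^{2n}}(0,d_0)\times\mathbf\Sigma_c^{-r_0},\mathbf\Sigma^{r_0})$. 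The main subtlety anywhere in the proof is the first step: confirming that $D\mathcal F(0,0)$ really splits $H^1\times L^2$ into the spectral pieces, which rests crucially on Lemma \ref{lem:contcoo}(ii) vanishing at $z=0$ and on $B^{-\sigma_3/2}$ commuting with $P_c\times P_c$.
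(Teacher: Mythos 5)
The paper states Lemma \ref{lem:systcoo} without supplying a proof (it is followed immediately by Section \ref{subsec:comb}); the result is treated as routine given Lemmas \ref{lem:decomposition} and \ref{lem:contcoo}. Your reconstruction is correct and fills the gap with the argument one would expect: analyticity of $\mathcal F$ from the explicit formula \eqref{eq:contcoo21}, invertibility via the inverse function theorem at the origin using $C_{JA}[0]=0$, the two-sided norm bound, and the $\mathbf\Sigma^{-r_0}\to\mathbf\Sigma^{r_0}$ smoothing of $\Psi$ from the finite-rank structure of $R[z]-B^{-\sigma_3/2}$.

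One phrase worth tightening: you say the discrete span ``lies in the symplectic complement,'' but the relevant fact is simpler and spectral, not symplectic. The vectors $\partial_{JA}\Phi[0]=(\phi_J,\im\widetilde\omega_J\phi_J)$ and $\im(\phi_J,\im\widetilde\omega_J\phi_J)$, $J=1,\dots,2n$, span exactly the discrete spectral subspace $\bigl((1-P_c)\times(1-P_c)\bigr)(H^1\times L^2)$, since for each $K\le n$ the four real vectors obtained from $J=K$ and $J=K+n$ (where $\widetilde\omega_{K+n}=-\omega_K$) span $\C\phi_K\times\C\phi_K$. Since $B^{-\sigma_3/2}$ is a function of $H$ and therefore commutes with $P_c$, it maps $\mathbf H^{1/2}_c$ into $H^1_c\times L^2_c$; the two pieces are transverse because they lie in complementary spectral subspaces of $H$, and this is what makes $D\mathcal F(0,0)$ an isomorphism. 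The norm equivalence \eqref{eq:coo11} is then also a direct corollary: the upper bound follows from $\|\Phi[z]\|\lesssim|z|$ and $\|R[z]\Xi\|_{H^1\times L^2}\lesssim\|\Xi\|_{\mathbf H^{1/2}}$ (Lemma \ref{lem:contcoo}(ii)); for the lower bound, Lemma \ref{lem:decomposition} recovers $z$ and \eqref{eq:contcoo22} recovers $\Xi=B^{\sigma_3/2}(P_c\times P_c)(U-\Phi[z])$, both continuously in $U$, as you wrote.
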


\subsection{Some simple combinatorics} \label{subsec:comb}

We now recall from \cite{CM1} the following definition, where we are introducing the auxiliary quantity $\mathbf{Z}$.
\begin{definition}\label{def:comb0} Given $z\in \C^n$, we  denote by $\mathbf{Z}$ the vector (or the matrix)    with entries $(  z_{J}  \overline{{z}}_K)$ with $J,K\in [1,{2n}]$ but only with pairs of  indexes  with $J\neq K$.  Here  $\mathbf{Z}\in   L$, where $L$ is the subspace of $ \C ^{n_0} =\{  (a_{J,K} ) _{J, K =1, ..., n} : J\neq  K   \}   $ for $n_0={2n} ({2n}-1),$ with $(a_{J,K} ) \in  L $ iff $a_{J,K} = \overline{a}_{K,J}$ for all $J,K$.
 For  a multi index $\textbf{m}=\{  {m}_{JK}\in \N _0 :J\neq K  \}$ we set $\textbf{Z}^{\textbf{m}}=\prod  (z_{J}  \overline{{z}}_K) ^{ {m}_{JK}} $  and $ |\mathbf{m}| :=\sum _{J,K}m  _{JK} $.
\end{definition}

We will  think formally of  $\mathbf{Z}$ as an auxiliary variable and we will
 be dealing with polynomials in the variable $\mathbf{Z}$.
Some of the monomials in $\mathbf{Z}$ will be reminder terms. In order to   distinguish
between the monomials which are reminder terms and the ones which aren't, we  need the following definition.

\begin{definition}\label{def:setM}   Consider the set of multi indexes  $\textbf{m}$
as in Definition  \ref{def:comb0}. Consider for any $K\in \{ 1, ..., 2n  \}$ the set
\begin{equation} \label{eq:setM0} \begin{aligned} &  \mathcal{M}_K (r)=
\{   \textbf{m} :   \left | \sum _{L=1} ^{2n} \sum _{J=1} ^{2n} m  _{LJ} ( \widetilde{\omega} _L - \widetilde{\omega} _J) -  \widetilde{\omega} _K \right |>m   \text{ and  }  |\mathbf{m}|   \le r \},  \\& \mathcal{M}_0 (r)=
\{   \textbf{m} :     \sum _{L=1} ^{2n} \sum _{J=1} ^{2n} m  _{LJ} ( \widetilde{\omega} _L - \widetilde{\omega} _J) =0   \text{ and  }  |\mathbf{m}|   \le r \}  .
\end{aligned}
\end{equation}
Set now
\begin{equation} \label{eq:setM1} \begin{aligned} &   {M}_K(r)=
\{    (\mu , \nu )\in \N _0 ^{2n}  \times  \N _0 ^{2n} :   \exists \textbf{m}   \in \mathcal{M}_K (r) \text{ such that }   z^{\mu} \overline{z}^{\nu} = \overline{z}_K   \mathbf{Z} ^{\textbf{m} }  \}  , \\& M(r)= \cup  _{K=1} ^{2n} {M}_K(r).
\end{aligned}
\end{equation}
We also set $M= M (2N+4)$, and
\begin{equation} \label{eq:setMhat} \begin{aligned} &   M_{min}=
\{    (\mu , \nu )\in M :      (\alpha  , \beta  )\in M  \text{ with } \alpha _J\le \mu _J \text{ and } \beta _J\le \nu _J \ \forall \ J \Rightarrow  (\alpha  , \beta  ) = (\mu , \nu )  \}  .
\end{aligned}
\end{equation}
\end{definition}
It is easy to see that if $(\mu , \nu )\in M_{min}$, then for any $J$ we have $\mu  _{J} \nu _{J}=0$. Indeed,  first of all    $(\mu , \nu )\in M(r)$    if and only if $|\nu |= |\mu |+1$, $|\mu | \le r$  and $|(\mu - \nu )\cdot  \widetilde{\omega} |>m$. Now,  if  $\mu  _{J_0}\ge 1$ and $ \nu _{J_0} \ge 1$  then, by subtracting from both of them a unit and leaving unchanged the other
coordinates, we obtain another pair $(\alpha , \beta )\in M_{min}$ with  $ \alpha _J\le \mu _J $ and $ \beta _J\le \nu _J$ for all $J$ but with $(\alpha  , \beta  ) \neq (\mu , \nu )$, so that $(\mu , \nu )\not \in M_{min}$.

\begin{lemma} \label{lem:comb1} We have the  following facts.

\begin{itemize}
\item[\textup{(1)}]
Let   $\mathbf{a}=( a  _{LJ}) \in \N _0 ^{n_0}  $ s.t.  for the $N$ in (H3)
\begin{equation}\label{eq:comb-3}
    \sum _{ L<J\le n}a  _{LJ} +\sum _{ L\le n <J }a  _{ JL} +\sum _{n<L<J}a  _{ JL} >N.
\end{equation}
  Then   for any $K$,  we have
(notice the switch  in the indexes)
\begin{equation}\label{eq:comb3}
\begin{aligned}
&  \sum _{ L<J\le n}a  _{LJ} (\omega _L -\omega _J)-\sum _{ L\le n <J}a  _{ JL} (\omega _L +\omega _J) +\sum _{ n<L<J}a  _{ JL} (\omega _L -\omega _J) +   \omega _K< -m   .
\end{aligned}
\end{equation}

\item[\textup{(2)}] Consider $\mathbf{m}\in \N _0 ^{n_0} $  with  $|\mathbf{m}|   \ge 2N+3$  and the monomial $ z _{J_0} \mathbf{Z}^{\mathbf{m}}$.
    Then $\exists$  $\mathbf{a}, \mathbf{b}\in \N _0 ^{n_0} $ s.t. for both $\mathbf{f}=\mathbf{a},\mathbf{b}$
\begin{equation}\label{eq:comb4}
\begin{aligned}
&  \sum _{ L<J\le n}f  _{LJ} +\sum _{ L\le n <J }f  _{ JL} +\sum _{n<L<J}f  _{ JL}     =N+1 \text{ } ,\\&
  f  _{LJ}  =0  \text{ for all  $(L,J)$ not  in the above sum},  \text{ and }
   a  _{LJ}   +  b  _{LJ} \le m  _{LJ} + m  _{ JL}   \text{ for all } (L,J).
\end{aligned}
\end{equation}
Moreover, there are  two indexes $ (K,S)$ s.t.
 \begin{equation}\label{eq:comb5}
\begin{aligned}
& \sum _{L,J }  a  _{LJ} (\widetilde{\omega} _L -\widetilde{\omega} _J) - \widetilde{\omega} _K<- m   \text{ and }    \sum _{L,J }  b  _{LJ} (\widetilde{\omega} _L -\widetilde{\omega} _J) - \widetilde{\omega} _S<- m,
\end{aligned}
\end{equation}
 and for $|z|\le 1$
  \begin{equation}\label{eq:comb6}
\begin{aligned}
& |z  _{J_0} \mathbf{Z}^{\mathbf{m}}|\le |z_{J_0}| \  |z _K \mathbf{Z}^{\mathbf{a}}|  \  |z _S \mathbf{Z}^{\mathbf{b}}| .
\end{aligned}
\end{equation}

\item[\textup{(3)}] For $\mathbf{m}$ with $|\mathbf{m}|   \ge 2N+3$,  there exist $(K,S)$, $\mathbf{a} \in \mathcal{M}_K$ and $\mathbf{b} \in \mathcal{M}_S$ s.t.  \eqref{eq:comb6}  holds.

    \end{itemize}
\end{lemma}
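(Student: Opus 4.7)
The plan is to handle the three parts in sequence; parts (2) and (3) will reduce to part (1) combined with a combinatorial splitting argument.

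For part (1) I would argue by direct estimation. Using $\widetilde\omega_J=\omega_J$ for $J\le n$ and $\widetilde\omega_J=-\omega_{J-n}$ for $J>n$, and recalling that the $\omega_j$ are strictly increasing (since the $e_j$ are), each of the three sums in \eqref{eq:comb3} consists of non-positive terms: $\omega_L-\omega_J<0$ when $L<J\le n$ or when $n<L<J$ (in the extended $\omega$-notation), while $-a_{JL}(\omega_L+\omega_J)$ is manifestly non-positive. Every nonzero contribution has magnitude at least $\delta:=\min\{\omega_j\pm\omega_l:j>l\}>0$, which by (H3) satisfies $N\delta\ge 2m$. Since \eqref{eq:comb-3} forces the total weight to be $\ge N+1$, the three sums add to at most $-(N+1)\delta\le-(2m+2m/N)$. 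As $\omega_K<m$, the left-hand side of \eqref{eq:comb3} is bounded above by $-m-2m/N<-m$.

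For part (2) I would view $\mathbf Z^{\mathbf m}$ as a multiset of $|\mathbf m|$ oriented edges on the vertex set $\{1,\dots,2n\}$; since $|z_L\overline z_J|=|z_J\overline z_L|$, the absolute value depends only on the symmetrized counts $\widetilde m_{LJ}:=m_{LJ}+m_{JL}$ indexed by unordered pairs. I orient each unordered pair in the unique way making $\widetilde\omega_L<\widetilde\omega_J$ (the $\widetilde\omega_J$ are distinct by simplicity of the eigenvalues). Because $|\mathbf m|\ge 2N+3=2(N+1)+1$, I can pick disjoint sub-multisets of size $N+1$ to form $\mathbf a$ and $\mathbf b$, placing mass only on the chosen orientations, and retain at least one reserved edge whose two distinct endpoints I call $K,S$. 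By construction $a_{LJ}+b_{LJ}\le\widetilde m_{LJ}=m_{LJ}+m_{JL}$, and \eqref{eq:comb-3} holds for both $\mathbf a,\mathbf b$ with sums equal to $N+1>N$. Inequality \eqref{eq:comb5} then follows by applying part (1) to $\mathbf a$ (with its $K$) and to $\mathbf b$ (with its $S$): part (1) yields $\mathrm{sum}+\omega_K<-m$, and since $-\widetilde\omega_K\le\omega_K$ always, also $\mathrm{sum}-\widetilde\omega_K<-m$.

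The bound \eqref{eq:comb6} reduces, since $|z_i|\le 1$, to comparing the exponent of each $|z_i|$ on both sides. The left-hand exponent is $\delta_{iJ_0}+n_i$ with $n_i:=\sum_J(m_{iJ}+m_{Ji})$, while the right-hand exponent equals $\delta_{iJ_0}+\delta_{iK}+\delta_{iS}+\sum_J(a_{iJ}+a_{Ji}+b_{iJ}+b_{Ji})$. The second piece is precisely the degree of vertex $i$ in the combined edge allocation to $\mathbf a$, $\mathbf b$, and the reserved edge; it is bounded above by $n_i$ because the entire allocation is carved out of the multiset $\widetilde m$. This gives \eqref{eq:comb6}. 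Part (3) is then immediate: \eqref{eq:comb5} yields $|\mathrm{sum}-\widetilde\omega_K|>m$ with $|\mathbf a|=N+1\le 2N+4$, so $\mathbf a\in\mathcal M_K$, and likewise $\mathbf b\in\mathcal M_S$. The main technical point is this per-vertex bookkeeping in \eqref{eq:comb6}: the single reserved edge (contributing to the two distinct vertices $K$ and $S$) absorbs exactly the extra $|z_K||z_S|$ factor on the right-hand side, which is precisely why the threshold $|\mathbf m|\ge 2N+3$ is sharp.
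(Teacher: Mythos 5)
Your proof is correct and follows essentially the same strategy as the paper's: you split $\mathbf m$ into two disjoint sub-multisets of size $N+1$ plus a reserved factor, reorient each selected unordered pair to the canonical direction (which is exactly the paper's explicit three-case rule, more compactly described as orienting so that $\widetilde\omega$ increases), apply part (1) together with $-\widetilde\omega_K\le\omega_K$ to obtain \eqref{eq:comb5}, and verify \eqref{eq:comb6} by a degree count equivalent to the paper's factorization $\mathbf Z^{\mathbf m}=z^\mu\overline z^\nu\,\mathbf Z^{\mathbf c}\mathbf Z^{\mathbf d}$. The graph-theoretic bookkeeping is a presentational repackaging of the same argument, not a different route.
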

\proof The inequality \eqref{eq:comb3} follows immediately   by the definition of $N$ and
\begin{equation*}
\begin{aligned}
& \sum _{L<J}  a  _{LJ} (\omega _L -\omega _J) + \omega_K< -\min \{ \omega _J -\omega _L : J>L \} N +m \le -m.
\end{aligned}
\end{equation*}
We now consider the monomial  $ z  _{J0}\mathbf{Z}^{\mathbf{m}}$.   Since  $|\mathbf{m}|\ge 2N+3$, there are vectors
$\mathbf{c}, \mathbf{d}\in \N _0 ^{2n_0} $ such that  $|\mathbf{c}|=|\mathbf{d}| = N+1$ with $c  _{LJ}   +  d  _{LJ} \le m  _{LJ}  $   for all  $(L,J)$.

\noindent Focusing on $ {\mathbf{c}}$ we define  $\mathbf{a}$ as follows:
\begin{equation*}
\begin{aligned}
&\text{  if $L<   J\le n$  let $a _{LJ}=c_{LJ}+c_{JL}$ and $a _{JL}=0$;}\\&
   \text{  if $L\le n <J$  let $a _{LJ}=0$ and $a _{JL}=c_{LJ}+c_{JL}$;}\\&
\text{  if $  n <L<J$  let $a _{LJ}=0$ and $a _{JL}=c_{LJ}+c_{JL}$}.
\end{aligned}
\end{equation*}
Notice that
\begin{equation*}
\begin{aligned}
&      \widetilde{\omega} _L -\widetilde{\omega} _J = \left\{\begin{matrix}   {\omega} _L - {\omega} _J \text{  if $L<   J\le n$ }\\
  \omega _L+{\omega} _J \text{  if $L\le n <J$ }\\  -\omega _L+{\omega} _J \text{  if $  n <L<J$ } \end{matrix}\right.
\end{aligned}
\end{equation*}
and that
\begin{equation*}
\begin{aligned}
&     N+1 = \sum _{L,J}  a  _{L,J}    =\sum _{ L<J\le n}a  _{LJ} +\sum _{ L\le n <J }a  _{ JL} +\sum _{n<L<J}a  _{ JL}.
\end{aligned}
\end{equation*}
Hence, by claim (1) \begin{equation*}
\begin{aligned}
&    \sum _{L,J }  a  _{LJ} (\widetilde{\omega} _L -\widetilde{\omega} _J) - \widetilde{\omega} _K \\&
=  \sum _{L <   J\le n } a  _{LJ} \overbrace{(\widetilde{\omega} _L -\widetilde{\omega} _J)}^{\omega _L-{\omega} _J} +\sum _{L\le n <J }  a_{ JL}\overbrace{(\widetilde{\omega} _J -\widetilde{\omega} _L)} ^{-(\omega _L+{\omega} _J)}+ \sum _{ n <L<J}  a_{ JL}\overbrace{(\widetilde{\omega} _J -\widetilde{\omega} _L)}^{\omega _L-{\omega} _J} - \widetilde{\omega} _K\\&
\le   \sum _{L <   J\le n } a  _{LJ}(\omega _L-{\omega} _J) -\sum _{L\le n <J }  a_{ JL} { (\omega _L+{\omega} _J)}+ \sum _{ n <L<J} a_{ JL} (\omega _L-{\omega} _J) +{\omega} _K<-m.
\end{aligned}
\end{equation*}
We can define $\mathbf{b}$  from $\mathbf{d}$ similarly. Then, we have
 \begin{equation}\label{eq:comb7}
\begin{aligned}
& z  _{J_0}\mathbf{Z}^{\mathbf{m}} = z _{J_0}   z^\mu \overline{z} ^\nu \mathbf{Z}^{\mathbf{c}}  \mathbf{Z}^{\mathbf{d}} \text{ with $|\mu | >0 $, and  $|\nu | >0 $.}
\end{aligned}
\end{equation}
Therefore, for $z_K$ a factor of $z^\mu $ and  $\overline{z}_S$ a factor of $\overline{z}^\nu $,
for $|z|\le 1$ we have from \eqref{eq:comb7}
\begin{equation*}
\begin{aligned}
& |z _{J_0} \mathbf{Z}^{\mathbf{m}}| \le | z _{J_0}  | \ |  z_K   \mathbf{Z}^{\mathbf{c}} | \  | \ z_S \mathbf{Z}^{\mathbf{d}}  | =  | z _{J_0}  | \ |  z_K   \mathbf{Z}^{\mathbf{a}} | \  | \ z_S \mathbf{Z}^{\mathbf{b}}  | .
\end{aligned}
\end{equation*}
Furthermore,  \eqref{eq:comb4} is satisfied. Moreover, since our $(\mathbf{a},\mathbf{b})$ satisfy
$\mathbf{a} \in \mathcal{M}_K$ and $\mathbf{b} \in \mathcal{M}_L$,
claim (3) is a consequence of claim (2).

\qed

\section{The energy functional}
\label{sec:invariants}

 For $U=(u,v)$, the equation \eqref{eq:NKGE}  admits the following energy
\begin{equation}\label{eq:inv}
\begin{aligned}
&
  E(U)= \<\mathcal AU|U\>+E_P(u),
\\&\mathrm{where}\
\mathcal A=\begin{pmatrix} B^2 & 0 \\ 0 & 1\end{pmatrix}
  \text{  and }
E_P(u ):=  2^{-1}  \int_{\R^3}  |u|^4 d x.
\end{aligned}
\end{equation}
   We have   $ {E}\in C^\omega ( H^1( \R ^3, \C  )\times  L^2( \R ^3, \C  ), \R  )$.    We denote by $dE$ the Frech\'et derivative of $E$.
We also define $\nabla E$ by
\begin{align}\label{eq:grad1}
dE(U)X=\<\nabla E(U)|X\>.
\end{align}
Similarly, for $F=F(z,\Xi)$, we set $\nabla_\Xi F$ by $d_\Xi F(z,\Xi)\tilde \Xi =\<\nabla_\Xi F(z,\Xi),\tilde \Xi\>$, where $d_\Xi F$ is the Frech\'et derivative with respect to $\Xi$.

We set $E^0(z,\Xi):=\mathcal F^* E (z,\Xi)$ where $\mathcal F$ is given in Lemma \ref{lem:systcoo}.
\begin{lemma}
  \label{lem:EnExp}
For $(z,\Xi )\in D_{\C^{2n }}(0 , d  _1) \times ( D_{\mathbf H ^{\frac{1}{2}} }(0 , d  _1)\cap \mathcal{H}_c[0]) $
    we  have
\begin{align}
   &E^0 (z,\Xi)=     \sum _{J=1}^{2n}E (\Phi_J[z_J])   +   \langle B \Xi |  {\Xi} \rangle+ E _P( B^{-\frac{1}{2}}\eta)
       \nonumber
       \\&
 +
	 \sum _{l =0} ^{\infty}\sum _{J =0}^{2n}	 \sum _{
	  |\textbf{m}|=l+1 }  \textbf{Z}^{\textbf{m}}   a_{J \textbf{m} }^{(0)}( |z_J |^2 )+
   \sum _{l =0} ^{\infty}\sum _{J=1}^{2n} 	 \sum _{
	  |\textbf{m}|=l  }
        \langle
  {z}_J \textbf{Z}^{\textbf{m}} G_{J\textbf{m}}^{(0)}(|z_J|^2  )| \Xi \rangle   \label{eq:enexp1}\\&
\nonumber
+ \sum _{ i+j  = 2}        \langle  G_{2 ij }^{(0)} ( z,\Xi )| (B^{-\frac{1}{2}}\eta )^{   i} \overline{B^{-\frac{1}{2}}\eta} ^j\rangle
+  \sum _{ i+j  = 3}   \langle G_{3 ij }^{(0)} ( z,\Xi )|   B^{-\frac{1}{2}}\eta ^{   i} \overline{B^{-\frac{1}{2}}\eta} ^j\rangle
\nonumber   +\resto^{(0)},
  \end{align}
    where $a_{J\mathbf m}^{(0)}(|z_0|^2)$ will mean $a_{J\mathbf m}^{(0)}$ (a constant), $  \langle B \Xi |  {\Xi} \rangle:=\langle B \xi |  {\xi} \rangle + \langle   B \eta |   {{ \eta}} \rangle $ and:
 \begin{itemize}

\item[\textup{(1)}] $  ( a_{j\textbf{m} }^{(0)} ,  G_{jk\textbf{m}} ^{(0)}  )  \in C^\omega (D_{\R}(0,d  _0^2), \C\times  \mathbf \Sigma ^{r_{0}}(\mathbb{R}^3, \mathbb{C}   )
) $ with $|a_{J\mathbf m}^{(0)}(|z_{J}|)|\leq c |z_J|^2$ and $G_{J0}^{(0)}(0)=0$;

 \item[\textup{(2)}]$(G_{2\textbf{m} ij }^{(0)},G_{d ij }^{(0)},\resto^{(0)})\in C^\omega (D_{\C^{2n}\times \mathbf \Sigma^{-r_0}}(0,d  _0),
\Sigma ^{r_{0}} (\mathbb{R}^3, \mathbb{C}   )\times
\Sigma ^{r_{0}} (\mathbb{R}^3, \mathbb{C}   )\times
\mathbf \Sigma ^{r_{0}} (\mathbb{R}^3, \mathbb{C}   ))$;

\item[\textup{(3)}]
$G_{2 ij }^{(0)} (0)=0$ and $|\resto^{(0)}|\leq c|z|\|\Xi\|_{\mathbf \Sigma^{-r_0}}^2$.


\end{itemize}
\end{lemma}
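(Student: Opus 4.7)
My plan is to substitute $U = \mathcal{F}(z,\Xi) = \Phi[z] + R[z]\Xi$ directly into $E(U) = \langle \mathcal{A}U|U\rangle + E_P(u)$ and expand. Bilinearity of $\langle\mathcal A\cdot|\cdot\rangle$ splits the quadratic part as $\langle\mathcal A\Phi|\Phi\rangle + 2\langle\mathcal A\Phi|R\Xi\rangle + \langle\mathcal AR\Xi|R\Xi\rangle$, and the multinomial formula on $|(\Phi+R\Xi)_1|^4$ produces five more pieces indexed by the number $i+j\in\{0,\ldots,4\}$ of $\Xi$-factors. Since $\Phi[z] = \sum_J \Phi_J[z_J]$, the purely diagonal $J=K$ contributions from the first piece, together with the diagonal $\sum_J E_P(Q_{Jz_J})$ coming from the pure quartic, combine to give $\sum_{J=1}^{2n} E(\Phi_J[z_J])$ exactly.

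I would then treat the off-diagonal pieces by invoking the gauge identity $Q_{Jz_J} = z_J\widetilde Q_J(|z_J|^2)$ with $\widetilde Q_J$ real-valued, which is contained in Proposition~\ref{prop:bddst}. This converts every off-diagonal bilinear or quartic integral into $\Re(z_J\overline{z}_K\cdots)\,f(|z_J|^2,|z_K|^2,\ldots)$ with $J\neq K$ and $f$ real-analytic. Because $z_J\overline{z}_K$ for $J\neq K$ is an entry of $\mathbf Z$, and because any paired surplus factors can be eliminated via the identity $|z_J|^2|z_K|^2 = \mathbf Z_{JK}\mathbf Z_{KJ}$, each such monomial rewrites as $\mathbf Z^{\mathbf m}$ times at most a single leftover block $(|z_{J}|^2)^a$. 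Taylor-expanding the analytic coefficient then yields the sum $\sum_{l,J,|\mathbf m|=l+1}\mathbf Z^{\mathbf m}a_{J\mathbf m}^{(0)}(|z_J|^2)$, with the convention $J=0$ corresponding to constant coefficients. The bound $|a_{J\mathbf m}^{(0)}|\lesssim |z_J|^2$ follows from the leading-order cancellation $\langle\mathcal A(\phi_J,\im\widetilde\omega_J\phi_J)|(\phi_K,\im\widetilde\omega_K\phi_K)\rangle = \langle B^2\phi_J|\phi_K\rangle + \widetilde\omega_J\widetilde\omega_K\langle\phi_J|\phi_K\rangle = 0$ for $J\neq K$, which is checked index by index using $B^2\phi_J = \omega_J^2\phi_J$ and the sign assignment \eqref{eq:veceig}.

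For the $\Xi$-dependent pieces I would write $R[z] = B^{-\sigma_3/2} + S[z]$ with $S[z]$ of operator norm $O(|z|^2)$ by Lemma~\ref{lem:contcoo}(ii). The leading part of $\langle\mathcal AR\Xi|R\Xi\rangle$ produces exactly $\langle B\Xi|\Xi\rangle$ via $B^{-\frac12}B^2B^{-\frac12} = B^{\frac12}\cdot 1\cdot B^{\frac12} = B$, while the $S$-correction is of order $|z|^2$ and is absorbed into the $\sum_{i+j=2}$ summand or into $\resto^{(0)}$. For the cross term $2\langle\mathcal A\Phi|R\Xi\rangle$, the same gauge-and-Taylor machinery yields the linear-in-$\Xi$ sum $\sum_{J,\mathbf m}\langle z_J\mathbf Z^{\mathbf m}G_{J\mathbf m}^{(0)}(|z_J|^2)|\Xi\rangle$; the condition $G_{J0}^{(0)}(0)=0$ is forced because the leading-in-$z_J$ contribution is $z_J\langle\mathcal A(\phi_J,\im\widetilde\omega_J\phi_J)|B^{-\sigma_3/2}\Xi\rangle$, which vanishes thanks to $B\phi_J = \omega_J\phi_J$ and $\Xi\in\mathcal H_c[0]$, giving $\langle\phi_J|\eta\rangle = \langle\phi_J|\xi\rangle = 0$. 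The higher-order $\Xi$-pieces from the multinomial expansion of $E_P$ supply the $G_{2ij}^{(0)}$ and $G_{3ij}^{(0)}$ sums, while all residual contributions of order $|z|\|\Xi\|^2$ or smaller are collected into $\resto^{(0)}$, which yields the claimed bound. Analyticity and Schwartz regularity of every coefficient follow from Proposition~\ref{prop:bddst}(1) together with Lemmas~\ref{lem:contcoo} and~\ref{lem:systcoo}.

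The main obstacle is the combinatorial regrouping of off-diagonal monomials: one has to verify that every term produced by the expansion indeed fits into the prescribed form $\mathbf Z^{\mathbf m}a_{J\mathbf m}^{(0)}(|z_J|^2)$, so that after eliminating paired $|z_\cdot|^2$-factors via $|z_J|^2|z_K|^2 = \mathbf Z_{JK}\mathbf Z_{KJ}$ only a single residual $|z_J|^2$-block remains. Once this bookkeeping is settled, the remaining verifications amount to routine power-counting and analyticity arguments inherited from the building blocks.
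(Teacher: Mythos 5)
The proposal is correct and reaches the same expansion, but it is organized differently from the paper's argument, and one justification is misattributed. The paper first rewrites $\mathcal F(z,\Xi)=\Psi[z,\Xi]+B^{-\sigma_3/2}\Xi$ with $\Psi[z,\Xi]=\Phi[z]+(R[z]-B^{-\sigma_3/2})\Xi\in C^\omega(\cdots,\mathbf\Sigma^{r_0})$ (this is the last statement of Lemma \ref{lem:systcoo}), and then Taylor-expands $E$ at the Schwartz-class base point $\Psi$ in the increment $B^{-\sigma_3/2}\Xi$; the zeroth and first Taylor orders produce the discrete and linear-in-$\Xi$ sums, while the quadratic remainder splits into $\langle B\Xi|\Xi\rangle$ plus the $E_P$ contributions. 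You instead expand $\langle\mathcal A(\Phi+R\Xi)|\Phi+R\Xi\rangle$ bilinearly and $|(\Phi+R\Xi)_1|^4$ multinomially. This is equivalent, and your use of $R[z]=B^{-\sigma_3/2}+S[z]$ with $S$ smoothing of size $O(|z|^2)$ plays the same regularity-tracking role that the paper's bundling into $\Psi$ does; the paper's version is marginally cleaner because the Schwartz regularity of every coefficient then follows at once from $\Psi\in\mathbf\Sigma^{r_0}$. Your derivation of $G_{J0}^{(0)}(0)=0$ is correct and in fact more explicit than what is written in the paper: the needed fact is that $B^{\pm1/2}$ preserves $P_cL^2$, hence $\langle\phi_J|B^{\pm1/2}\eta\rangle=\langle\phi_J|B^{\pm1/2}\xi\rangle=0$ for $\Xi\in\mathcal H_c[0]$. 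On the other hand, the claim that $|a_{J\mathbf m}^{(0)}|\lesssim|z_J|^2$ ``follows from'' the off-diagonal cancellation $\langle B^2\phi_J|\phi_K\rangle+\widetilde\omega_J\widetilde\omega_K\langle\phi_J|\phi_K\rangle=0$ is a misattribution. That bound on $a_{J\mathbf m}^{(0)}$ for $J\ge 1$ is purely a bookkeeping convention: once the combinatorial regrouping produces a coefficient of the form $\mathbf Z^{\mathbf m}f(|z_J|^2)$, the constant $f(0)$ is by definition placed in $a_{0\mathbf m}^{(0)}$, so what remains in $a_{J\mathbf m}^{(0)}$ vanishes at $z_J=0$ automatically, with no cancellation required. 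The orthogonality identity you verify (including the nontrivial $K=J\pm n$ case) is correct, but what it actually yields is the vanishing of $a_{0\mathbf m}^{(0)}$ for $|\mathbf m|=1$, something the lemma as stated does not demand. This does not affect the validity of the rest of your argument.
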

\proof
First, by Lemma \ref{lem:systcoo}, we have $\mathcal F(z,\Xi)=\Psi[z,\Xi]+B^{-\frac{\sigma_3}{2}}\Xi$.
By Taylor expansion, we have
\begin{align}\label{eq:expansion1}
E(U)=&E(\Psi[z,\Xi])+\<\nabla E(\Psi[z,\Xi])|{B^{-\frac{\sigma_3}{2}}\Xi}\>\nonumber\\&+ \int_0^1(1-t)\<\nabla^2 E(\Psi[z,\Xi]+t B^{-\frac{\sigma_3}{2}}\Xi)B^{-\frac{\sigma_3}{2}}\Xi \ |\  {B^{-\frac{\sigma_3}{2}}\Xi}\>  dt.
\end{align}
Since $\Psi \in C^\omega(D_{\C^{2n}}\times \mathbf\Sigma_c^{-r }, \mathbf\Sigma^{r })$ for arbitrary $r$, we can expand the 1st term and the 2nd term as
\begin{align}\label{eq:expansion2}
&
	\sum _{l =0} ^{\infty}	 \sum _{J =0}^{2n} \sum _{
	  |\textbf{m}|=l }  \textbf{Z}^{\textbf{m}}    a_{J \textbf{m} }^{(0)}( |z_J |^2 )+
  \sum _{l =0} ^{\infty} \sum _{J=1}^{2n} 	 \sum _{
	  |\textbf{m}|=l  }
      \langle  {z}_J \textbf{Z}^{\textbf{m}}
    G_{J\textbf{m}}^{(0)}(|z_J|^2  )| \Xi \rangle    + \resto^{(0)}  (z, \Xi  ) ,
\end{align}
where $\resto$ satisfies the estimate in (3) and $a_{0\mathbf m }^{(0)}(|z_0|^2)=a_{0\mathbf m}^{(0)}$ (constant).
Notice that the pure term  of $z_J$ (i.e.\ $ a_{J0}$) corresponds to $E(Q_{Jz_J},\im\tilde \omega_{Jz_J}Q_{Jz_J})$ and, since there are no constant terms, $a_{00}^{(0)}=0$.

\noindent We now consider the 3rd term of \eqref{eq:expansion1}.
For $\Psi(z,\Xi)=(\psi_1(z,\Xi),\psi_2(z,\Xi))$ and recalling $\Xi =( \eta , \xi )$,
\begin{align}\label{eq:expansion3}
\nabla^2 E(\Psi[z,\Xi]+tB^{-\frac{\sigma_3}{2}}\Xi)=2\begin{pmatrix} B^2 & 0 \\ 0 & 1\end{pmatrix} + \nabla^2 E_P \left (\psi_1(z,\Xi)+B^{-1/2}\eta  \right ) .
\end{align}
The contribution of the 1st term of the r.h.s. of \eqref{eq:expansion3} in the 3rd term of \eqref{eq:expansion1} is
\begin{align}\label{eq:expansion4}
 \int_0^1(1-t)\<2\begin{pmatrix} B^2 & 0 \\ 0 & 1\end{pmatrix}B^{-\frac{\sigma_3}{2}}\Xi \big | {B^{-\frac{\sigma_3}{2}}\Xi}\>=\<B\Xi|{\Xi}\>.
\end{align}
Recall  that $E_p(u)=\frac 1 2 \int |u|^4\,dx$.  Elementary computations show
that $\nabla  E_p\in C^\omega (H^1, H ^{-1}   )$ and $\nabla ^2  E_p\in C^\omega (H^1, \mathcal{ L}(H^1, H ^{-1}   )   )$ are defined by
\begin{equation*}
\begin{aligned}  & \nabla  E_p(u)=2 |u|^2u    \, , \quad   \nabla  ^2 E_p(u)X= 2|u|^2X + 4u \Re ( u \overline{X}),
\end{aligned}
 \end{equation*}
 so that $\langle \nabla  ^2 E_p(u)X|X\rangle = 4 \langle |u|^2\big | |X|^2\rangle +
 2\langle  u ^2\big |  X ^2\rangle $.
 Then, after elementary computations,  the contribution of the 2nd term of the r.h.s. of \eqref{eq:expansion3} in the 3rd term of \eqref{eq:expansion1} is
\begin{equation} \label{eq:expansion5}
 \begin{aligned}
&   2   \<|\psi_1|^2\big  |\ |B^{-1/2} \eta|^2\>+     \< \psi_1^2\big  |(B^{-1/2} \eta)^2\> +   2 \<\psi_1 \big |   |B^{-1/2} \eta|^2{B^{-1/2} \eta}\>+  E_p(B^{-1/2} \eta).
\end{aligned}\end{equation}
Therefore, combining \eqref{eq:expansion1}--\eqref{eq:expansion5}, we have the conclusion.
\qed

\section{Darboux Theorem}
\label{sec:darboux}

We define $\Omega_0$ by
\begin{align}\label{eq:newsymp}
\Omega_0(X\big |Y)=\sum_{J=1}^{2n}\Omega(d\Phi_J[z_J]X\big |d\Phi_J[z_J]Y)+\Omega(d\Xi\big |d\Xi).
\end{align}

\begin{proposition}[Darboux theorem]\label{prop:darboux}
There exists $d_D>0$ such that there exists $\tilde {\mathfrak F}^D=(\tilde {\mathfrak F}^D_z,\tilde {\mathfrak F}^D_\Xi) \in C^\omega(D_{\C^{2n}\times \mathbf \Sigma^{-r_0}}(0,d_1),C^{2n}\times \mathbf{ \Sigma}^{r_0})
$ such that
\begin{align}\label{eq:propdar}
|\tilde {\mathfrak F}^D_z(z,\Xi)|+\|\tilde {\mathfrak F}^D_\Xi(z,\Xi)\|_{\mathbf \Sigma^{r_0}}\leq c|z|\(|\mathbf Z|+\|\Xi\|_{\mathbf \Sigma^{-r_0}}\),
\end{align}
and $\mathfrak F^D:=\mathrm{Id}+\tilde{ \mathfrak F}^D$ satisfies $\(\mathfrak F^D\)^* \Omega= \Omega_0$.
In addition, we have $\mathfrak F^D(e^{\im \theta}z,e^{\im \theta}\Xi)=e^{\im \theta}\mathfrak F^D(z,\Xi)$.
\end{proposition}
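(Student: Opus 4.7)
The plan is to prove the proposition by Moser's trick, following the standard Darboux argument adapted to this weakly-infinite-dimensional, gauge-covariant setting.

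\textbf{Moser homotopy.} Abbreviate $\omega^1 := \mathcal F^*\Omega$ and $\omega^0 := \Omega_0$, both viewed as closed two-forms on a neighbourhood of the origin in $\C^{2n}\times \mathbf \Sigma^{-r_0}$. Closedness of $\omega^1$ is automatic (pullback of a constant symplectic form); closedness of $\omega^0$ is visible from its formula \eqref{eq:newsymp}. Consider the interpolation $\omega_t = \omega^0 + t(\omega^1-\omega^0)$, $t\in[0,1]$. At the origin we have $\omega^1|_{(0,0)} = \omega^0|_{(0,0)}$: on the $d\Phi$--$d\Phi$ block this uses $\langle\phi_J|\phi_K\rangle=\delta_{JK}$ together with $\partial_{JA}\Phi_J[0]=(\phi_J,\im\widetilde\omega_J\phi_J)$ (up to $\im$), which makes the off-diagonal terms vanish; on the $d\Phi$--$d\Xi$ block it uses the defining relation of $\mathcal H_c[z]$ plus that $R[z]$ maps into $\mathcal H_c[z]$; on the $d\Xi$--$d\Xi$ block it follows from $R[0]=B^{-\sigma_3/2}$ and the identity $\Omega(B^{-\sigma_3/2}\cdot|B^{-\sigma_3/2}\cdot)=\Omega(\cdot|\cdot)$ on $\mathbf{L}^2_c$. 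By continuity and Lemma \ref{lem:systcoo}, $\omega_t$ is nondegenerate on a fixed small ball for every $t\in[0,1]$.

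\textbf{Primitive with prescribed vanishing.} The heart of the proof is to exhibit a one-form $\alpha$ with $d\alpha = \omega^1-\omega^0$ whose size is controlled by $|z|\bigl(|\mathbf Z|+\|\Xi\|_{\mathbf\Sigma^{-r_0}}\bigr)$. I would apply the standard Poincaré homotopy operator along the radial flow $(z,\Xi)\mapsto (sz,s\Xi)$, $s\in[0,1]$, which preserves the circle action. The pointwise estimate then follows from the observation that $\omega^1-\omega^0$ itself vanishes along the ``single-mode'' locus $\{\Xi=0,\ \mathbf Z=0\}$: on such a point exactly one $z_J$ is non-zero, so the potentially dangerous cross-terms $\Omega(d\Phi_J[z_J]|d\Phi_K[z_K])$ for $J\neq K$ have one factor linear in $\phi_K$ and another supported on $\phi_J$ plus $O(|z_J|^3)$ corrections, and orthogonality of the eigenfunctions kills them; meanwhile the $d\Phi$--$d\Xi$ and $d\Xi$--$d\Xi$ blocks reduce to their $\omega^0$-values. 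Expanding $\omega^1-\omega^0$ in Taylor series and grouping monomials one finds that every surviving coefficient carries either a factor $z_J\overline z_K$ with $J\neq K$ (hence a component of $\mathbf Z$) or a factor of $\Xi$, and in either case at least one further power of $z$ coming from the differentials. Radial integration therefore produces $\alpha$ with precisely the claimed bound.

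\textbf{Flow and conclusion.} Define the time-dependent vector field $Y_t$ by $\iota_{Y_t}\omega_t=-\alpha$, solvable since each $\omega_t$ is nondegenerate. Let $\mathfrak F^D_s$ be the flow of $Y_t$; Cartan's formula and $\dot\omega_t=\omega^1-\omega^0=d\alpha$ give
\begin{equation*}
\tfrac{d}{ds}(\mathfrak F^D_s)^*\omega_s=(\mathfrak F^D_s)^*\bigl(\mathcal L_{Y_s}\omega_s+\dot\omega_s\bigr)=(\mathfrak F^D_s)^*\bigl(d\iota_{Y_s}\omega_s+d\alpha\bigr)=0,
\end{equation*}
so $(\mathfrak F^D_1)^*\omega^1=\omega^0$ and we take $\mathfrak F^D:=\mathfrak F^D_1$. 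The bound \eqref{eq:propdar} on $\tilde{\mathfrak F}^D=\mathfrak F^D-\mathrm{Id}$ is obtained by integrating the same size estimate on $Y_t$ from $t=0$ to $t=1$; real analyticity is automatic since every ingredient is real analytic, and the flow exists on a possibly smaller ball $D(0,d_D)$. For the gauge equivariance, the action $(z,\Xi)\mapsto(e^{\im\theta}z,e^{\im\theta}\Xi)$ preserves both $\omega^0$ and $\omega^1$ (by the equivariance of $\Phi$ and $R$ and by the fact that $\Omega$ on $L^2\times L^2$ is $U(1)$-invariant), preserves the radial homotopy, and hence preserves $\alpha$ and $Y_t$; the flow therefore commutes with the action, yielding the stated equivariance.

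\textbf{Main obstacle.} The routine part is the Moser machinery; what actually requires care is tracking the vanishing order in the auxiliary variable $\mathbf Z$ so that the estimate \eqref{eq:propdar} emerges with a factor $|z|(|\mathbf Z|+\|\Xi\|)$ rather than a crude $|z|^2+\|\Xi\|^2$. This is precisely why the combinatorics of $\mathbf Z$ developed in Section \ref{subsec:comb} are needed: $\alpha$ must respect the decomposition into resonant and non-resonant monomials so that the subsequent Birkhoff-normal-form procedure in the spirit of \cite{CM1} sees the correct structure.
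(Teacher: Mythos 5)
Your overall architecture (Moser homotopy, interpolation $\omega_t=\omega^0+t(\omega^1-\omega^0)$, vector field from $i_{Y_t}\omega_t=-\alpha$, integrate the flow, read off the estimate) is exactly the paper's three-step scheme. The gap is in the step you yourself flag as the heart of the matter: how the primitive $\alpha$ of $\omega^1-\omega^0$ is produced and why it obeys the bound $c\,|z|\bigl(|\mathbf Z|+\|\Xi\|_{\mathbf\Sigma^{-r_0}}\bigr)$.

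The paper does \emph{not} use the radial Poincar\'e homotopy operator, and with good reason: the radial primitive does not satisfy \eqref{eq:propdar}. To see the obstruction, write out $\gamma-\gamma_0$ from the canonical linear primitives $\gamma(U)X=\tfrac12\Omega(U|X)$ and $\gamma_0(U)X=\tfrac12\sum_J\Omega(\Phi_J[z_J]|d\Phi_J[z_J]X)+\tfrac12\Omega(\Xi|d\Xi X)$. As in \eqref{eq:1forms1}, this difference contains the term $\sum_J\Omega(\Phi_J[z_J]\,|\,B^{-\sigma_3/2}d\Xi)$, whose $d\Xi$-coefficient is of size $O(|z|)$ with \emph{no} factor of $|\mathbf Z|$ or $\|\Xi\|$. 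Consequently the two-form $\omega^1-\omega^0=d(\gamma-\gamma_0)$ has a $dz\wedge d\Xi$ block with coefficients of size $O(|z|)$, and applying the radial homotopy to it yields a $\Gamma_\Xi$-coefficient of size $O(|z|^2)$, which is strictly larger than $|z|(|\mathbf Z|+\|\Xi\|)$ on the single-mode set $\mathbf Z=0,\ \Xi=0$. Two primitives of the same closed form differ by an exact one-form; the whole point of Lemma \ref{lem:1forms} is that one must choose the ``gauge'' correctly, and the paper does this by subtracting the explicit exact piece $d\psi$ with $\psi=\sum_J\Omega(\Phi_J[z_J]|B^{-\sigma_3/2}\Xi)$, which moves the offending $O(|z|)\,d\Xi$ contribution into an $O(\|\Xi\|)\,dz$ contribution and repairs the estimate. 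Your proposal never performs this correction, and the assertion that radial integration ``produces $\alpha$ with precisely the claimed bound'' therefore does not hold.

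A secondary issue: the claim that $\omega^1-\omega^0$ vanishes on the single-mode locus ``by orthogonality of the eigenfunctions'' is only true at leading order. The cross-terms $\Omega(\Phi_J[z_J]|\partial_{z_{KA}}\Phi_K[z_K])$ for $J\neq K$ involve the nonlinear correction $q_{Jz_J}$, and Proposition \ref{prop:bddst} only gives $\langle q_{Jz_J}|\phi_J\rangle=0$, not orthogonality to $\phi_K$ for $K\neq J$; likewise the $d\Xi$--$d\Xi$ block of $\omega^1$ picks up the $C_{JA}[z]=O(|z|^2)$ corrections in $R[z]$, which do not vanish when $\mathbf Z=0$. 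Neither fact is fatal in the paper's approach because the estimate is extracted term-by-term from the explicit expansion \eqref{eq:1forms1} after the $d\psi$ correction, but your argument leans on a vanishing statement that is not established and is not what actually makes the bound work. To repair your proof you would need to replace the Poincar\'e homotopy with a construction of $\alpha=\gamma-\gamma_0-d\psi$ (or some equivalent choice of exact correction) and verify the bound \eqref{eq:1forms2} directly from that expression.
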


The rest of this chapter is devoted for the proof of Proposition \ref{prop:darboux}.
It consists of three steps.
\begin{enumerate}
\item
Find an appropriate   1-form $\Gamma$ such that $\Omega-\Omega_0=d \Gamma$.
\item
For each $s\in [0,1]$ find the vector field $\mathcal X^s$ s.t. $i_{\mathcal X^s}\Omega_s=-\Gamma$,   where $\Omega_s:=(\Omega+s(\Omega-\Omega_0))$.
\item
Solve $\frac{d}{ds}\mathfrak F^s =\mathcal X^s(\mathfrak F^s)$ with $\mathfrak F^0=\mathrm{Id}$.
\end{enumerate}
Then, $\mathfrak F:=\mathfrak F^1$ will be our desired transform.
This will be seen by
\begin{align}\label{eq:argmoser}
\frac{d}{ds} (\mathfrak F^s)^*\Omega_s =(\mathfrak F^s)^*(\mathcal L_{\mathcal X^s}\Omega_s+\partial_s \Omega_s)=(\mathfrak F^s)^*(di_{\mathcal X^s}\Omega_s+d \Gamma)=0.
\end{align}
The estimate \eqref{eq:propdar} will be deduced from the construction.

\noindent  It is elementary that $d\gamma =\Omega$ and $d \gamma_0=\Omega_0$
for the following 1--forms:
 \begin{equation} \label{eq:1forms}\begin{aligned} &
\gamma (U)X:=2 ^{-1} \Omega  ( U | X ),\quad
\gamma_0(U)X:=2^{-1}\sum_{J=1}^{2n} \Omega(\Phi_J[z_J]|d\Phi_J[z_J]X)+ 2^{-1}\Omega(\Xi|d\Xi).
\end{aligned}\end{equation}
we have .

\begin{lemma}
  \label{lem:1forms}
  For  the $d _0>0$  of Lemma \ref{lem:systcoo}
  set $D=D _{ \C^{2n} \times \mathbf  {\Sigma} ^{-r_0}} (0,d _0) $. Then
  there exist functions $\Gamma_{JA}\in C^\omega(D ,\R)$ and $\Gamma_\Xi\in C^\omega(D ,\mathbf \Sigma^{r_0})$ s.t. $\psi \in C^\omega (D   , \R) $  such that
 \begin{align} &
    \gamma(U)-\gamma_0(U)-d\psi =\sum_{J=1}^{2n}\sum_{A=R,I}\Gamma_{JA}dz_{JA}+\<\Gamma_\Xi|d\Xi\>:=\Gamma.    \label{eq:alpha1}
\end{align}
Further, we have $\Gamma(e^{\im \theta}U)=\Gamma(U)$ for all $\theta\in\R$ and
\begin{align}\label{eq:1forms2}
\sum_{J=1}^{2n}\sum_{A=R,I}|\Gamma_{JA}|+\|\Gamma_\Xi\|_{\mathbf \Sigma^{r_0}}\leq c |z|\(|\mathbf Z|+\|\Xi\|_{\mathbf\Sigma^{-r_0}}\),
\end{align}
\end{lemma}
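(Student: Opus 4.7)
Since $d\gamma=\Omega$ and $d\gamma_0=\Omega_0$, the 1--form $\tilde\alpha:=\gamma-\gamma_0$ automatically satisfies $d\tilde\alpha=\Omega-\Omega_0$, so the proof reduces to exhibiting $\psi\in C^\omega(D,\R)$ such that $\Gamma:=\tilde\alpha-d\psi$ obeys the pointwise bound \eqref{eq:1forms2}. The plan is to compute $\tilde\alpha$ in the coordinates $(z,\Xi)$ by substituting $U=\mathcal F(z,\Xi)=\Phi[z]+R[z]\Xi$ into $\gamma(U)=2^{-1}\Omega(U|dU)$ and subtracting $\gamma_0$; the difference decomposes into (i) a cross-in-$z$ term $2^{-1}\sum_{J\neq K}\Omega(\Phi_K[z_K]|d\Phi_J[z_J])$, (ii) mixed terms $2^{-1}\Omega(R[z]\Xi|d\Phi[z]+dR[z]\cdot\Xi)$, (iii) a continuous remainder $2^{-1}\bigl(\Omega(R[z]\Xi|R[z]\,d\Xi)-\Omega(\Xi|d\Xi)\bigr)$, and (iv) the discrete--continuous coupling $2^{-1}\Omega(\Phi[z]|R[z]\,d\Xi)$.

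Contributions (i)--(iii) already satisfy the target bound. For (i), a direct calculation using $\Omega(U|V)=2\langle\mathbb J^{-1}U|V\rangle$ together with the real-valuedness and $L^2$-orthogonality of the $\phi_J$ and the identity $\tilde\omega_{J+n}=-\tilde\omega_J$ shows that the leading term in the Taylor expansion of $\Omega(\Phi_K[z_K]|d\Phi_J[z_J])$ in $(z_J,z_K)$ vanishes, so only the cubic $q_{Jz}$-corrections of Proposition \ref{prop:bddst} survive; these carry an explicit cross factor $z_J z_K$ with $J\neq K$, yielding a bound by $c|z|\cdot|\mathbf Z|$. For (ii), each summand manifestly carries a factor of $z$ (from $\Phi[z]$ or from $dR[z]$, which is $O(|z|)$ by Lemma \ref{lem:contcoo}\,(ii)) together with a factor of $\Xi$, giving a bound by $c|z|\cdot\|\Xi\|_{\mathbf\Sigma^{-r_0}}$. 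For (iii), using that $B^{-\sigma_3/2}$ preserves $\Omega$ (a direct consequence of the self-adjointness of $B$) and that $R[z]-B^{-\sigma_3/2}=O(|z|^2)$, the leading $\Omega(\Xi|d\Xi)$ cancels, leaving a remainder bilinear in $(\Xi,d\Xi)$ with coefficient $O(|z|^4)$ and hence bounded by $c|z|\cdot\|\Xi\|_{\mathbf\Sigma^{-r_0}}$ for $|z|\le 1$.

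The only problematic contribution is (iv): the linear and quadratic parts in $z$ vanish because $\Phi[z]$ lies at leading order in the discrete subspace of $H$ while $R[z]\,d\Xi$ lies in the continuous subspace (as both $B^{\pm 1/2}$ preserve $L_c^2$ and the $\phi_J$ are orthogonal to $L_c^2$), but the cubic $q_{Jz}$- and $C_{JA}$-corrections produce a surviving 1--form of the form $\langle A[z]|d\Xi\rangle$ with $A\in C^\omega(D_{\C^{2n}}(0,d_0),\mathbf\Sigma^{r_0})$ and $\|A[z]\|_{\mathbf\Sigma^{r_0}}=O(|z|^3)$, carrying no $\mathbf Z$ or $\Xi$ factor. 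This is the main obstacle, which I resolve by setting $\psi[z,\Xi]:=\langle A[z]|\Xi\rangle$; then $d\psi=\langle A[z]|d\Xi\rangle+\sum_{J,A}\langle\partial_{JA}A[z]|\Xi\rangle\,dz_{JA}$, so that $\tilde\alpha-d\psi$ has its $d\Xi$-coefficient reduced precisely by the bad term at the cost of new $dz_{JA}$-coefficients of size $O(|z|^2)\cdot\|\Xi\|_{\mathbf\Sigma^{-r_0}}\le c|z|\cdot\|\Xi\|_{\mathbf\Sigma^{-r_0}}$, which is admissible. Phase invariance of $\Gamma$ follows automatically: the gauge identities $\Phi[e^{\im\theta}z]=e^{\im\theta}\Phi[z]$ and $R[e^{\im\theta}z]=e^{\im\theta}R[z]e^{-\im\theta}$ make $\gamma$, $\gamma_0$, and hence $\tilde\alpha$ invariant under $U\mapsto e^{\im\theta}U$; they also force $A[e^{\im\theta}z]=e^{\im\theta}A[z]$, rendering $\psi$ invariant under $(z,\Xi)\mapsto(e^{\im\theta}z,e^{\im\theta}\Xi)$. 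The real-analyticity of $\Gamma$ and the $\mathbf\Sigma^{r_0}$-regularity of $\Gamma_\Xi$ inherit from Proposition \ref{prop:bddst}, Lemma \ref{lem:systcoo}, and Lemma \ref{lem:contcoo}. The remaining work is purely combinatorial Taylor-expansion bookkeeping to verify the above bounds term by term.
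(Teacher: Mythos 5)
Your argument follows the same overall strategy as the paper: express $\gamma-\gamma_0$ in the $(z,\Xi)$-coordinates via $\mathcal F$, identify the discrete--continuous coupling as the only term whose $d\Xi$-coefficient lacks the required $\mathbf Z$ or $\Xi$ factor, and kill it by subtracting $d\psi$ for a $\psi$ linear in $\Xi$, at the price of benign $dz_{JA}$-coefficients carrying an $O(\|\Xi\|_{\mathbf\Sigma^{-r_0}})$ factor. The key mechanism (antisymmetry of $\Omega$, the identity $B^{-\sigma_3/2}\mathbb J^{-1}B^{-\sigma_3/2}=\mathbb J^{-1}$ to cancel $\Omega(\Xi|d\Xi)$, orthogonality of $\phi_J$ to $\mathbf L^2_c$ to kill the $O(|z|)$ part of the coupling) is identical.

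The one genuine difference is the choice of $\psi$. The paper takes $\psi=\sum_J\Omega(\Phi_J[z_J]|B^{-\sigma_3/2}\Xi)$, so that subtracting $d\psi$ removes exactly $\sum_J\Omega(\Phi_J[z_J]|B^{-\sigma_3/2}d\Xi)$ and introduces $-\sum_J\Omega(B^{-\sigma_3/2}\Xi|d\Phi_J[z_J])$, a good $dz$-form; it then asserts the remaining piece $\sum_J\Omega(\Phi_J[z_J]|\tilde R[z]\,d\Xi)$ already obeys \eqref{eq:1forms2}. You instead set $\psi=\langle A[z]|\Xi\rangle$ with $A[z]$ the \emph{entire} $O(|z|^3)$ residual $d\Xi$-coefficient of $\Omega(\Phi[z]|R[z]\,d\Xi)$, thus absorbing both the $q_{Jz}$-corrections \emph{and} the $C_{JA}$-corrections coming from $\tilde R[z]$. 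Since the latter piece is $\sum_{J',A'}C_{J'A'}[z]\,\Omega(\Phi_{J'}[z_{J'}]|\partial_{J'A'}\Phi[0])$, with $C_{J'A'}[z]=O(|z_{J'}|^2)$ and $\Omega(\Phi_{J'}[z_{J'}]|\partial_{J'A'}\Phi[0])=O(|z_{J'}|)$, it contains pure $z_{J'}|z_{J'}|^2$ contributions that are not of the form $|z|\,|\mathbf Z|$, so your more inclusive $\psi$ is arguably the safer, cleaner choice: it handles this term on the same footing rather than relegating it to the residual.

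A few small inaccuracies, none of which affect the conclusion: (a) you omit the cross term $\Omega(\Phi[z]|dR[z]\cdot\Xi)$ from your decomposition (it belongs with your (ii) and is $O(|z|^2\|\Xi\|_{\mathbf\Sigma^{-r_0}})$, hence harmless); (b) in (iii) the residual after the leading cancellation is bilinear in $(\Xi,d\Xi)$ with coefficient $O(|z|^2)$ (one factor of $\tilde R[z]$), not $O(|z|^4)$ --- still $\le c|z|\,\|\Xi\|_{\mathbf\Sigma^{-r_0}}$; and (c) the assertion that ``$R[z]\,d\Xi$ lies in the continuous subspace'' is literally true only for the $B^{-\sigma_3/2}d\Xi$ part, since $R[z]$ maps into $\mathcal H_c[z]\ne\mathbf L^2_c$ for $z\ne 0$ --- but the orthogonality argument that kills the linear-in-$z$ part only uses the leading constant piece $B^{-\sigma_3/2}d\Xi\in\mathbf L^2_c$, so the stated conclusion (vanishing of the $O(|z|)$ part of the coupling) is correct.
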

\proof
Notice that, for  $\tilde R[z]=R[z]-B^{-\frac{\sigma_3}{2}}$, expressing $U$ and $X=dUX$
in terms of \eqref{eq:systcoo1}
we have
\begin{align*}
2 \gamma(U)X=\Omega(U|X)=\Omega(\sum_{J=1}^{2n}\Phi_J[z_J]+B^{-\frac{\sigma_3}{2}}\Xi + \tilde R[z]\Xi\ |\  d(\sum_{K=1}^{2n}\Phi_K[z_K]+B^{-\frac{\sigma_3}{2}}\Xi + \tilde R[z]\Xi)X).
\end{align*}
Therefore,  using  also  $\mathbb J ^{-1}\sigma_3 =- \sigma_3\mathbb J ^{-1}$, which  implies   $B^{-\sigma_3/2}\mathbb J^{-1} B^{-\sigma_3/2}=\mathbb J^{-1}$,
we have
\begin{align}
2(\gamma-\gamma_0)=&\sum_{J=1}^{2n}\Omega(\Phi_J[z_J]\ |\ \sum_{K\neq J}d\Phi_K[z_K]+B^{-\frac{\sigma_3}{2}}d\Xi+d\(\tilde R[z]\Xi\))\nonumber\\&+\Omega(B^{-\frac{\sigma_3}{2}}\Xi\ |\  \sum_{K=1}^{2n}d\Phi_K[z_K]+d\(\tilde R[z]\Xi\))\label{eq:1forms1}\\&
+\Omega(\tilde R[z]\Xi\ |\ d(\sum_{K=1}^{2n}\Phi_K[z_K]+B^{-\frac{\sigma_3}{2}}\Xi + \tilde R[z]\Xi)).\nonumber
\end{align}
Notice   $2(\gamma-\gamma_0)-\sum_{J=1}^{2n}\Omega(\Phi_J[z_J]|B^{\frac{-\sigma_3}{2}}d\Xi)$
 admits an expansion like the   middle part of \eqref{eq:alpha1}, where the coefficients satisfy   \eqref{eq:1forms2}.
 If we write
\begin{align*}
\sum_{J=1}^{2n}\Omega(\Phi_J[z_J]|B^{-\frac{\sigma_3}{2}}d\Xi)=d \sum_{J=1}^{2n}\Omega(\Phi_J[z_J]|B^{-\frac{\sigma_3}{2}}\Xi)+
\Omega(B^{-\frac{\sigma_3}{2}}\Xi|d\Phi_J[z_J])
\end{align*}
for  $\psi= \sum_{J=1}^{2n}\Omega(\Phi_J[z_J]|B^{-\frac{\sigma_3}{2}}\Xi)$  we obtain \eqref{eq:alpha1}--\eqref{eq:1forms2}.
 \qed

\begin{lemma}
  \label{lem:vectorfield0}   Consider the form  $\Omega _s:=\Omega _0+s(\Omega -\Omega _0)$ and set $i_X\Omega _t (Y):=\Omega _t (X|Y)$. Then $(\Omega -\Omega _0)$ and $\Gamma $  extend to forms defined in $    D_{\C^{2n}}(0 , d _0 ) \times \mathbf\Sigma ^{-r_0}_c $   and there is a  $d_1 \in (0, d_0) $ s.t.
  for any   $(s, z, \Xi )\in (-4, 4)\times  D_{\C ^{2n}  \times \mathbf \Sigma ^{-r_0}_c} (0,d_1 )$    there exists
  exactly one solution
    $\mathcal{X}^s  (z, \Xi )\in \mathbf L^2$ of the  equation     $
 i_{\mathcal{X}^s} \Omega _s=-  \Gamma
   $.
     Furthermore,  we have  the following facts.

\begin{itemize}
\item[\textup{(i)}]
     $\mathcal{X}^s  (z, \Xi  )\in \mathbf \Sigma ^{ r_0}$ and if we set $\mathcal{X}^s _{JA} (z, \Xi )=dz_{JA}\mathcal{X}^s  (z, \Xi )$ and  $\mathcal{X}^s _{\Xi} (z, \Xi )=d\Xi \mathcal{X}^s  (z, \Xi)$,   we have $\mathcal X^s_{JA}(z,\Xi)\in C^\omega(D_{\C^{2n}\times \mathbf \Sigma^{-r_0}}(0,d_1)\times(-4,4),\R)$, $\mathcal X^s_{JA}(z,\Xi)\in C^\omega(D_{\C^{2n}\times \mathbf \Sigma^{-r_0}}(0,d_1)\times(-4,4),\mathbf \Sigma_c^{r_0})$ and
\begin{align}\label{eq:field0}
\sum_{J=1}^{2n}\sum_{A=R,I}|\mathcal X^s_{JA}(z,\Xi)|+\|\mathcal X^s_{\Xi}(z,\Xi)\|_{\mathbf \Sigma^{r_0}}\leq c |z|(|\mathbf Z|+\|\Xi\|_{\mathbf \Sigma^{-r_0}}).
\end{align}

  \item[\textup{(ii)}]   For $\mathcal{X}^s _{J }:=dz_J\mathcal{X}^s$ and $\mathcal{X}^s _{\Xi }= d\Xi \mathcal{X}^s$, we have
  $\mathcal{X}^s _{J } (e^{\im \theta}z, e^{\im \theta}\Xi) = e^{\im \theta}\mathcal{X}^s _{J  } (z, \Xi)$ and  $\mathcal{X}^s _{\Xi } (e^{\im \theta}z, e^{\im \theta}\Xi) = e^{\im \theta}\mathcal{X}^s _{\Xi } (z, \Xi)$.
\end{itemize}

\end{lemma}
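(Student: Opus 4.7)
I would follow a standard Moser/Darboux scheme: show that $\Omega_s$ is a small perturbation of the block-diagonal form $\Omega_0$, invert it by a Neumann series, and set $\mathcal{X}^s := -\Omega_s^{-1}\Gamma$. First, every term in \eqref{eq:1forms1} and in the formula for $\Gamma$ in Lemma \ref{lem:1forms} pairs a $\mathbf{\Sigma}^{r_0}$-valued coefficient (built from $\Phi_J[z_J]$, its derivatives, or the finite rank operator $\tilde R[z]$ of Lemma \ref{lem:contcoo}) with at most one $\Xi$-factor, so $\Omega-\Omega_0$ and $\Gamma$ extend by duality from $D\times \mathbf{H}^{1/2}_c$ to $D_{\C^{2n}}(0,d_0)\times \mathbf{\Sigma}^{-r_0}_c$. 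Next, since $\Phi_J[z_J]$ depends only on $z_J$, the 2-form $\Omega_0$ is block diagonal: the $(z_J,z_J)$-block is $\alpha_J(z_J)\,dz_{JR}\wedge dz_{JI}$ with $\alpha_J(0)=\Omega(\partial_{JR}\Phi[0]\,|\,\partial_{JI}\Phi[0])\neq 0$ (the same non-vanishing that underlies \eqref{eq:ion31}), and the $\Xi$-block is the constant symplectic form $\Omega(\cdot\,|\,\cdot)$. Hence $\Omega_0$ is uniformly invertible, as a map $\C^{2n}\times \mathbf{\Sigma}^{r_0}_c \to (\C^{2n}\times\mathbf{\Sigma}^{-r_0}_c)^*$, for $|z|$ small.

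\textbf{Inversion and estimate.} Examining \eqref{eq:1forms1} again, each term of $\Omega-\Omega_0$ carries either a factor $\Phi_J[z_J]=O(z_J)$ or a factor $\tilde R[z]=O(|z|^2)$ from Lemma \ref{lem:contcoo}(ii). Hence $\|\Omega_s-\Omega_0\|=O(|z|+\|\Xi\|_{\mathbf{\Sigma}^{-r_0}})$ uniformly for $s\in(-4,4)$, so for $d_1>0$ small enough a Neumann series produces a uniformly bounded $\Omega_s^{-1}$. The equation $i_{\mathcal{X}^s}\Omega_s=-\Gamma$ is then uniquely solved by $\mathcal{X}^s=-\Omega_s^{-1}\Gamma$; the $\Xi$-component automatically lies in $\mathbf{\Sigma}^{r_0}_c$ (a regularity gain from $\mathbf{\Sigma}^{-r_0}_c$) because $\Gamma_\Xi\in\mathbf{\Sigma}^{r_0}$ and the Neumann inverse maps the relevant dual pair accordingly. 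Real-analytic dependence on $(s,z,\Xi)$ is inherited from $\Omega_s$ and $\Gamma$, and the estimate \eqref{eq:field0} follows from combining $\|\Omega_s^{-1}\|\lesssim 1$ with the bound \eqref{eq:1forms2} on $\Gamma$.

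\textbf{Gauge equivariance and main obstacle.} The form $\Omega$ is manifestly gauge invariant under $(z,\Xi)\mapsto (e^{\im\theta}z,e^{\im\theta}\Xi)$; so is $\Omega_0$ thanks to $\Phi[e^{\im\theta}z]=e^{\im\theta}\Phi[z]$; and $\Gamma$ is invariant by Lemma \ref{lem:1forms}. The equation defining $\mathcal{X}^s$ is therefore gauge-covariant, and uniqueness gives (ii). The delicate point throughout is the bookkeeping of the $\mathbf{\Sigma}^{r_0}\leftrightarrow\mathbf{\Sigma}^{-r_0}$ duality: one must simultaneously extend $\Omega-\Omega_0$ and $\Gamma$ to $\mathbf{\Sigma}^{-r_0}_c$, keep $\Omega_s-\Omega_0$ small enough to invert by Neumann series in the operator norm between the correct dual pair, and verify that the inverse lands in the \emph{regular} space $\mathbf{\Sigma}^{r_0}_c$ so that the vector field is integrable in Step 3 of Proposition \ref{prop:darboux}. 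Once this is done, the critical $|z|$-prefactor in \eqref{eq:field0} is inherited, via the uniform bound on $\Omega_s^{-1}$, from the analogous factor on $\Gamma$ in \eqref{eq:1forms2}.
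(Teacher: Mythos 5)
Your proposal is correct and follows the same route as the paper: treat $\Omega_s$ as a small perturbation of the block-diagonal $\Omega_0$, invert it by a Neumann series in the $\mathbf\Sigma^{r_0}\leftrightarrow\mathbf\Sigma^{-r_0}$ duality using that $\Omega-\Omega_0=d\Gamma$ has finite rank and $\Sigma^{r_0}$-valued coefficients small by \eqref{eq:1forms2}, set $\mathcal X^s=-\Omega_s^{-1}\Gamma$, read off \eqref{eq:field0} from the bound on $\Gamma$, and deduce gauge equivariance from uniqueness. The paper merely organizes the same inversion more explicitly, writing out the coupled equations \eqref{eq:field1}--\eqref{eq:field2} for $\mathcal X^s_\Xi$ and $\mathcal X^s_{JA}$ and eliminating $\mathcal X^s_\Xi$ first (which makes the regularity gain $\mathcal X^s_\Xi\in\mathbf\Sigma^{r_0}_c$ transparent), so the difference is purely bookkeeping.
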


\begin{proof}
In the following, we omit the summation.
We directly solve $i_{\mathcal{X}^s} \Omega _t=-  \Gamma$.
First,
\begin{align*}
\Omega_0(\mathcal X^s\big |Y)=\Omega(\partial_{JB}\Phi_J[z_J]\big |\partial_{JA}\Phi_J[z_J]) \mathcal X_{JB}^s Y_{JA}+ \Omega(\mathcal X_\Xi^s\big | Y_\Xi).
\end{align*}
Next, since $\Omega-\Omega_0=d \Gamma$, we have
\begin{align*}
&\Omega(\mathcal X^s\big | Y)-\Omega_0(\mathcal X^s\big |  Y)=\(\partial_{KB}\Gamma_{JA}-\partial_{JA}\Gamma_{KB}\) \mathcal X_{KB}^sY_{JA} + \<\nabla_\Xi \Gamma_{JA}\big | \mathcal X_\Xi^s\>Y_{JA}-\<\nabla_\Xi \Gamma_{KB}\big | Y_\Xi\>\mathcal X_{KB}^s\\&\quad
+\<\partial_{KB}\Gamma_\Xi\big |Y_\Xi\>\mathcal X_{KB}^s-\<\partial_{JA}\Gamma_\Xi\big |\mathcal X_\Xi^s\>Y_{JA}+\<d_\Xi \Gamma_\Xi(\mathcal X_\Xi^s)\big |Y_\Xi\>-\<d_\Xi \Gamma_\Xi (Y_\Xi)\big |\mathcal X_\Xi^s\>.
\end{align*}
Therefore, we have
\begin{align}
&2\mathbb J^{-1} \mathcal X_\Xi ^s + s\(-\mathcal X_{KB}^s\nabla_\Xi \Gamma_{KB}+\mathcal X_{KB}^s\partial_{KB}\Gamma _\Xi + d_\Xi \Gamma_\Xi(\mathcal X_\Xi^s)-(d_\Xi \Gamma_\Xi)^* \mathcal X_\Xi^s\)=-\Gamma_\Xi,\label{eq:field1}\\&
\Omega(\partial_{JB}\Phi_J[z_J]\big | \partial_{JA}\Phi_J[z_J])\mathcal X_{JB}^s+s\(\(\partial_{KB}\Gamma_{JA}-\partial_{JA}\Gamma_{KB}\) \mathcal X_{KB}^s+ \<\nabla_\Xi \Gamma_{JA}\big | \mathcal X_\Xi^s\>-\<\partial_{JA}\Gamma_\Xi\big | \mathcal X_\Xi^s\>\)\nonumber\\&=-\Gamma_{JA}.\label{eq:field2}
\end{align}
We first fix $\mathcal X_{KB}^s$ and solve \eqref{eq:field1} for $\mathcal X_\Xi^s$ by Neumann series.
Notice that the solution $\mathcal X_\Xi^s$ becomes analytic w.r.t.\ $z,\Xi,s$ and $\mathcal X_{KB}^s$.
Next, since $\Omega(\partial_{JB}\Phi_J(z_J),\partial_{JA}\Phi_J[z_J])$ is invertible, we can solve \eqref{eq:field2} again by Neumann series.
Therefore, we obtain $\mathcal X_{JA}^s$ and $\mathcal X_\Xi^s$ which satisfies \eqref{eq:field1}, \eqref{eq:field2}.
Finally, \eqref{eq:field0} follows from \eqref{eq:1forms2} in Lemma \ref{lem:1forms} and
the gauge invariance of $ \mathcal{X}$ follows from the gauge invariance of $\Omega $ and $\Omega _0$.
\end{proof}

  \begin{lemma}
  \label{lem:darflow0}    For the   $d_1>0$  and $\mathcal{X}^s  (z, \Xi ) $
   of  Lemma \ref{lem:vectorfield0},   consider
         the   following system, which  is well defined in
     $(s, z, \Xi )\in (-4, 4)\times D_{\C ^n  \times \mathbf \Sigma ^{-r_0}_c} (0,d_1 )$:
\begin{equation}\label{eq:syst1}
\begin{aligned}
  &    \frac{d}{ds} S_J = \mathcal{X}^s _{J }  (z+S_z, \Xi+S_\Xi ) \text{  and }     \frac{d}{ds} S_\Xi = \mathcal{X}^s _{\Xi}  (z+S_z, \Xi+S_\Xi  ) ,
     \end{aligned}
\end{equation}
where $S_z=(S_1,\cdots,S_{2n})$.
Then the following facts holds.

\begin{itemize}
\item[\textup{(i)}]  For   $d _2\in (0,d_1 ) $  sufficiently small system \eqref{eq:syst1}
generates flows
\begin{align} \nonumber
  &   (S_z(s,z,\Xi),S_\Xi(s,z,\Xi)) \in C ^\omega ((-2, 2)\times D_{\C ^n\times \mathbf \Sigma ^{ -r_0}_c} (0,d_2 ) ,D_{\C ^n\times \mathbf\Sigma ^{ r_0}_c} (0,d_1 )) ,
     \end{align}
with
\begin{align}\label{eq:darflow1}
\sum_{J=1}^{2n}| S_{J}(s,z,\Xi)|+\|S_{\Xi}(s,z,\Xi)\|_{\mathbf \Sigma^{r_0}}\leq c |z|(|\mathbf Z|+\|\Xi\|_{\mathbf \Sigma^{-r_0}}).
\end{align}

  \item[\textup{(ii)}]      We have  $S _{J }  (s,e^{\im \theta}z,e^{\im \theta} \Xi )=e^{\im \theta}S _{J }  (s,z, \Xi )$,
 $S _{\Xi }  (s,e^{\im \theta}z,e^{\im \theta} \Xi )=e^{\im \theta}S _{\Xi }  (s,z, \Xi )$.
 \end{itemize}

\end{lemma}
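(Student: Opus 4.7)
\proof[Proof proposal]
The plan is to solve \eqref{eq:syst1} by the standard Picard--Lindel\"of theorem applied to an ODE whose right-hand side is real analytic (by Lemma \ref{lem:vectorfield0}) on the open set
$(-4,4)\times D_{\C^{2n}\times\mathbf{\Sigma}^{-r_0}_c}(0,d_1)$, with initial data $S_z(0)=0$, $S_\Xi(0)=0$. Since $\mathcal X^s$ is locally Lipschitz in $(z,\Xi)$ on every compact subset of its domain (analyticity), a local-in-$s$ solution exists and is unique. The analytic dependence on the initial data $(z,\Xi)$ and on the parameter $s$ is then a standard consequence of the analytic ODE theory. The work is therefore to (i) propagate the solution to $s\in(-2,2)$ keeping it inside the domain of $\mathcal X^s$, (ii) establish the sharp bound \eqref{eq:darflow1}, and (iii) verify the gauge covariance.

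For (i) and (ii), write $\widetilde z(s):=z+S_z(s)$, $\widetilde\Xi(s):=\Xi+S_\Xi(s)$, and denote by $\widetilde{\mathbf Z}(s)$ the auxiliary vector built from $\widetilde z(s)$ as in Definition \ref{def:comb0}. Using \eqref{eq:field0} one gets
\begin{equation*}
\tfrac{d}{ds}\!\left(|\widetilde z|+\|\widetilde\Xi\|_{\mathbf\Sigma^{-r_0}}\right)\le c\,|\widetilde z|\bigl(|\widetilde{\mathbf Z}|+\|\widetilde\Xi\|_{\mathbf\Sigma^{-r_0}}\bigr),
\end{equation*}
(after using the continuous embedding $\mathbf\Sigma^{r_0}\hookrightarrow\mathbf\Sigma^{-r_0}$ for the $\Xi$-component). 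Since $|\widetilde{\mathbf Z}|\lesssim |\widetilde z|^2$, a standard Gronwall argument shows that, for $d_2\in(0,d_1)$ small enough and $(z,\Xi)\in D_{\C^{2n}\times\mathbf\Sigma^{-r_0}_c}(0,d_2)$, the pair $(\widetilde z(s),\widetilde\Xi(s))$ remains inside $D_{\C^{2n}\times\mathbf\Sigma^{-r_0}_c}(0,d_1)$ for all $s\in(-2,2)$, so the flow extends. In particular $|\widetilde z(s)|\le 2|z|$ and $\|\widetilde\Xi(s)\|_{\mathbf\Sigma^{-r_0}}\le 2\|\Xi\|_{\mathbf\Sigma^{-r_0}}$ up to an irrelevant additive term absorbed into $S_\Xi$.

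For the sharp bound \eqref{eq:darflow1}, set $G(s):=|S_z(s)|+\|S_\Xi(s)\|_{\mathbf\Sigma^{r_0}}$. Integrating the ODE and using \eqref{eq:field0} with the previous a priori bound, together with the algebraic expansion
\begin{equation*}
|\widetilde{\mathbf Z}(s)|\le |\mathbf Z(z)|+2|z||S_z(s)|+|S_z(s)|^2\le |\mathbf Z(z)|+3|z|\,G(s),\quad \|\widetilde\Xi(s)\|_{\mathbf\Sigma^{-r_0}}\le \|\Xi\|_{\mathbf\Sigma^{-r_0}}+G(s),
\end{equation*}
yields
\begin{equation*}
G(s)\le 2c|s|\,|z|\bigl(|\mathbf Z(z)|+\|\Xi\|_{\mathbf\Sigma^{-r_0}}\bigr)+2c|z|(1+3|z|)\int_0^{|s|}G(\tau)\,d\tau.
\end{equation*}
For $|z|\le d_2$ sufficiently small, Gronwall's inequality gives $G(s)\le c'|z|(|\mathbf Z(z)|+\|\Xi\|_{\mathbf\Sigma^{-r_0}})$, which is exactly \eqref{eq:darflow1}.

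For (iii), by Lemma \ref{lem:vectorfield0}(ii), the maps $(z,\Xi)\mapsto\mathcal X^s_J(z,\Xi)$ and $(z,\Xi)\mapsto\mathcal X^s_\Xi(z,\Xi)$ are equivariant under the circle action $(z,\Xi)\mapsto(e^{\im\theta}z,e^{\im\theta}\Xi)$. Hence both $s\mapsto S(s,e^{\im\theta}z,e^{\im\theta}\Xi)$ and $s\mapsto e^{\im\theta}S(s,z,\Xi)$ solve the same ODE with the same (zero) initial data; uniqueness implies they coincide. The main obstacle is really (ii): the delicate point is that the right-hand side of \eqref{eq:field0} contains $|\mathbf Z|$ rather than $|z|^2$, so to retain this structure in the flow estimate one must avoid crudely bounding $|\widetilde{\mathbf Z}(s)|$ by $|\widetilde z(s)|^2$; the expansion above plus Gronwall is precisely what allows $|\mathbf Z(z)|$ to persist through the integration.
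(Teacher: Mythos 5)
Your proof is correct and takes the same route the paper does: the paper's own argument simply declares that \eqref{eq:syst1} is solved ``by contraction mapping theorem or implicit function theorem'' and that (i)--(ii) follow from (i)--(ii) of Lemma \ref{lem:vectorfield0}, while you spell out the Picard--Lindel\"of existence, the Gronwall bookkeeping that upgrades \eqref{eq:field0} to \eqref{eq:darflow1}, and the uniqueness argument for gauge covariance. The only point worth flagging is that the step $|S_z(s)|^2\le |z|\,G(s)$ implicitly assumes $|S_z(s)|\le|z|$ along the flow, which strictly speaking needs a small bootstrap (or a continuity argument) before the Gronwall inequality closes; this is routine for $d_2$ small and consistent with what the authors leave implicit.
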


\begin{proof}
It is elementary to solve the system \eqref{eq:syst1} by contraction mapping theorem or implicit function theorem.
The properties in (i), (ii) comes from (i), (ii) of Lemma \ref{lem:vectorfield0}.
\end{proof}

From Lemmas \eqref{lem:1forms}, \ref{lem:vectorfield0} and \ref{lem:darflow0} we immediately have
Proposition \ref{prop:darboux}.

\begin{proof}[Proof of Proposition \ref{prop:darboux}]
Set $\tilde {\mathfrak F}^D(z,\Xi):=(S_z(1,z,\Xi),S_\Xi(1,z,\Xi))$.
Then, by \eqref{eq:darflow1} and (ii) of Lemma \ref{lem:darflow0}, we have \eqref{eq:propdar} and gauge property of $\mathcal F^D:=\mathrm{Id}+\tilde{\mathfrak F}^D$.
Finally, since $(S_z,S_\xi)$ is the solution of \eqref{eq:syst1} we see that $\mathfrak F^s = \mathrm{Id}+ (S_z(s,\cdot,\cdot),S_\Xi(s,\cdot,\cdot))$ is the solution of $\frac{d}{ds}\mathcal X^s(\mathfrak F^s)$.
Therefore, by \eqref{eq:argmoser}, we have $(\mathfrak F^D)^*\Omega=\Omega_0$.
\end{proof}

Consider now the    symplectic form $\Omega _0$   in \eqref{eq:newsymp}.
Notice that it is of the following form \begin{equation}\label{eq:Omega0}   \begin{aligned}          \Omega _0& = \sum_{J=1}^{2n}2\im    \widetilde{\omega }_{J} (1 + a_J(|z_J|^2)   ) dz_J
\wedge d\overline{z}_{J }  +  2
 \langle
d\Xi| \mathbb{J} d{\Xi} \rangle \text{ for }  \mathbb{J}=\begin{pmatrix}  0 & 1  \\ -1 & 0
 \end{pmatrix}
\end{aligned}\end{equation}
for appropriate real valued functions $a_J(|z_J|^2)$ with $|a_J(|z_J|^2)|\leq c|z_J|^2$.
Indeed, by direct calculation, we have
\begin{align*}
\Omega(d\Phi_J[z_J]X|d\Phi_J[z_J]Y)&=\Omega(\partial_{JR}\Phi_{J}[z_J]|\partial_{JI}\Phi_J[z_J])(X_{JR}Y_{JI}-X_{JI}Y_{JR})\\&=\frac{\im}{2}\Omega(\partial_{JR}\Phi_{J}[z_J]|\partial_{JI}\Phi_J[z_J])dz\wedge d\bar z(X,Y).
\end{align*}
Next, by Lemma \ref{lem:appendix1}, we can show $\frac 1 2 \Omega(\partial_{JR}\Phi_{J}[z_J]|\partial_{JI}\Phi_J[z_J])$ is gauge invariant.
Therefore, we can write $\frac 1 2 \Omega(\partial_{JR}\Phi_{J}[z_J]|\partial_{JI}\Phi_J[z_J])=\tilde a(|z_J|^2)$.
Finally, the constant term is given by
\begin{align*}
\frac 1 2 \Omega(\partial_{JR}\begin{pmatrix} z_J\phi_J\\ \im \tilde \omega_J z_J\phi_J \end{pmatrix}|\partial_{JI}\begin{pmatrix} z_J\phi_J\\ \im \tilde \omega_J z_J\phi_J \end{pmatrix})&=\<\begin{pmatrix} 0 & 1 \\ -1 & 0\end{pmatrix} \begin{pmatrix} \phi_J \\ \im \tilde \omega _J \phi_J \end{pmatrix}| \begin{pmatrix} \im \phi_J \\ -\tilde \omega _J \phi_J \end{pmatrix} \>\\&=2\tilde \omega_J.
\end{align*}
Therefore, we have \eqref{eq:Omega0}.

\begin{remark}
A schematic explanation of  why we adapt $\Omega_0$ and we do not use the constant symplectic form
\begin{align*}
\Omega_0':=2\im    \widetilde{\omega }_{J} dz_J
\wedge d\overline{z}_{J }  +  2
 \langle
d\Xi| \mathbb{J} d{\Xi} \rangle,
\end{align*}
is the following.
First, notice that by \eqref{eq:syst1}, $S_J\sim \mathcal X^0_J$ and $S_\Xi\sim \mathcal X^0_\Xi$.
Further, by \eqref{eq:field1} and \eqref{eq:field2}, we have $\mathcal X^0_{JA}=\mathcal A_{KB} \Gamma_{KB}$ and $\mathcal X^0_\Xi=-2^{-1}\mathbb J \Gamma_\Xi$, where $\mathcal A_{KB}$ represents the inverse of the matrix $\Omega(\partial_{JB}\Phi_J[z_J],\partial_{JA}\Phi_J[z_J])$.
Therefore, the estimate \eqref{eq:propdar} is a direct consequence of \eqref{eq:1forms2} and if \eqref{eq:1forms2} do not hold, we cannot expect \eqref{eq:propdar} to hold.
Now, if we take $\Omega_0'$ instead of $\Omega_0$, then in $\Gamma_{JA}$  there will be pure terms of $z_J$.
So we cannot have the estimate \eqref{eq:1forms2}.
Finally, we remark that the estimate \eqref{eq:propdar} is needed   not only to come back to the original coordinate after scattering but it is also crucial for the cancelation lemma (Lemma \ref{lem:KExp1} below).
\end{remark}

\noindent We introduce an index $\ell =J, \overline{J}$ 
  with $J=1,...,2n$.
Given   $F\in C^1(\mathcal{U},\R )$
with $\mathcal{U}$ an open subset of $\C^n\times \mathbf \Sigma ^{-r}_c$,  its Hamiltonian vector field $X_F$.    We have summing on $J$
\begin{equation}  \label{eq:hamf-1}\begin{aligned}              i_{X_F}\Omega _0&=    2 \im    \widetilde{\omega} _{J } (  1 +   a_J(|z_J|^2) ) ((X_F) _{J }   d\overline{z}_{J } -(X_F) _{\overline{J} }   d {z}_{J }) +
 2\langle
(X_F) _{ {\Xi}  }| \mathbb{J} d{\Xi} \rangle
\\&
 =  \partial _J F    d {z}_J   +     \partial _{\overline{J} } F  d\overline{z}_{J }    +  \<\nabla _{\Xi  } F | d {\Xi}  \> .
\end{aligned}\end{equation}

\noindent Comparing the components of the two sides of \eqref{eq:hamf-1}  for  some     $\varpi  _J(|z_J|^2) $ with $|\varpi_J(|z_J|^2)|\leq c|z_J|^2$ we get
\begin{equation} \label{eq:hamf1}\begin{aligned}        &    (X_F)_J    =-\frac 1 2 \im   \widetilde{\omega} _{J}^{-1}(1 +   \varpi  _J(|z_J|^2))
 \partial _{\overline{J}} F   \ ,        \quad     (X_F) _{\Xi}   = \frac 1 2  \mathbb{J} \nabla  _{{\Xi}  } F   .
\end{aligned}\end{equation}
 \eqref{eq:hamf1}  imply that in this latter system of coordinates equation
 \eqref{eq:NKGE}  takes the form
\begin{equation} \label{eq:hamsys1}\begin{aligned}        &   \im \dot z_J       =   \frac 1 2 \widetilde{\omega} _{J}^{-1}(1 +   \varpi  _J(|z_J|^2))
 \partial _{\overline{J}}E^D   \ ,        \quad    \dot \Xi    = - \mathbb{J} \nabla  _{\overline{\Xi}  } E^D  ,
\end{aligned}\end{equation}
where $E^D:=\(\mathfrak F^D\)^*E^0 $ which is the pull back of $E^0$ by $\mathfrak F^D$.

\section{Effective Hamiltonian}
\label{sec:nforms}

%
%
The
	pullback of the energy $E$ by the map $\mathfrak{F}   $ in    Lemma \ref{lem:darflow0}
has the following expansion.

\begin{lemma}
  \label{lem:KExp1}
For $(z,\Xi )\in D_{\C^{2n }}(0 , d  _2) \times ( D_{\mathbf H ^{\frac{1}{2}} }(0 , d  _2)\cap \mathcal{H}_c[0]) $
    we  have
\begin{align}
&E^D (z,\Xi):=(\mathfrak F^D)^*E^0(z,\Xi)   =  \sum _{J=1}^{2n}E (\Phi_J[z_J])   +   \langle B \Xi |  {\Xi} \rangle+ E _P( B^{-\frac{1}{2}}\eta)
       \nonumber
       \\& \quad
 +
	 \sum _{l =1} ^{\infty}\sum _{J =0}^{2n}	 \sum _{
	  |\textbf{m}|=l+1 }  \textbf{Z}^{\textbf{m}}   a_{J \textbf{m} }^{(1)}( |z_J |^2 )+
   \sum _{l =1} ^{\infty}\sum _{J=1}^{2n} 	 \sum _{
	  |\textbf{m}|=l  }
        \langle
  {z}_J \textbf{Z}^{\textbf{m}} G_{J\textbf{m}}^{(1)}(|z_J|^2  )| \Xi \rangle   \label{eq:enexp10}\\&  \quad
\nonumber
+ \sum _{ i+j  = 2}        \langle  G_{2 ij }^{(1)} ( z,\Xi )| (B^{-\frac{1}{2}}\eta )^{   i} \overline{B^{-\frac{1}{2}}\eta} ^j\rangle
+  \sum _{ i+j  = 3}   \langle G_{3 ij }^{(1)} ( z,\Xi )|   B^{-\frac{1}{2}}\eta ^{   i} \overline{B^{-\frac{1}{2}}\eta} ^j\rangle
\nonumber   +\resto^{(1)},
  \end{align}
where $a_{J\mathbf m}^{(1)}, G_{J\mathbf m}^{(1)}, G_{2ij}^{(1)}, G_{3ij}^{(1)}$ and $\resto^{(1)}$ satisfies the condition (1)--(3) in Lemma \ref{lem:EnExp}.

%

\end{lemma}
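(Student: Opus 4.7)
The plan is to split the proof into two stages: first, a direct Taylor pullback of $E^{0}$ by $\mathfrak{F}^D$ produces an expansion of exactly the same general form as \eqref{eq:enexp1} (still with sums over $l \geq 0$); second, a cancellation argument eliminates the $l=0$ contributions, shifting the index range to $l \geq 1$. The first stage is essentially bookkeeping and the second is the core of the lemma; in particular I expect the vanishing of the $l=0$, $|\mathbf{m}|=1$ purely-$z$ terms in the first sum to be the main technical obstacle.

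For the structural stage I would write $\mathfrak{F}^D = \mathrm{Id} + \tilde{\mathfrak{F}}^D$ and use Taylor's theorem to get
\begin{align*}
E^D(z,\Xi) = E^{0}(z,\Xi) + \int_0^1 dE^{0}\bigl((z,\Xi) + t\tilde{\mathfrak{F}}^D(z,\Xi)\bigr)\bigl[\tilde{\mathfrak{F}}^D(z,\Xi)\bigr]\, dt.
\end{align*}
By Proposition \ref{prop:darboux}, $\tilde{\mathfrak{F}}^D$ is analytic, gauge-equivariant, and of size $|z|(|\mathbf{Z}| + \|\Xi\|_{\mathbf{\Sigma}^{-r_0}})$. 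Combining this with the analytic expansion of $E^{0}$ from Lemma \ref{lem:EnExp} and Cauchy estimates, $E^D$ inherits an analytic expansion whose monomials distribute into the same classes as \eqref{eq:enexp1}, with coefficients $a^{(1)}_{J\mathbf{m}}$, $G^{(1)}_{J\mathbf{m}}$, $G^{(1)}_{2ij}$, $G^{(1)}_{3ij}$ satisfying the regularity, gauge-invariance, and size bounds listed as (1)--(3) of Lemma \ref{lem:EnExp}. The normalizations $G^{(1)}_{J0}(0) = 0$ and $G^{(1)}_{2ij}(0) = 0$ come from restricting to $z=0$ (where $\tilde{\mathfrak{F}}^D$ vanishes) and invoking the analogous properties of $E^{0}$; the remainder $\resto^{(1)}$ collects all contributions of order $|z|\|\Xi\|_{\mathbf{\Sigma}^{-r_0}}^2$ or smaller.

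To remove the $l=0$ linear-in-$\Xi$ terms I would exploit that $(\mathfrak{F}^D)^* \Omega = \Omega_0$ makes Hamilton's equations \eqref{eq:hamsys1} govern the dynamics in Darboux coordinates, combined with the exact invariance of the bound state orbits. The orbit $\Phi_J[z_J]$ corresponds to the submanifold $M_J := \{z_K = 0\ (K \neq J),\ \Xi = 0\}$, on which $\dot\Xi \equiv 0$ forces $\nabla_\Xi E^D \bigr|_{M_J} = 0$. Since every $\mathbf{Z}^{\mathbf{m}}$ with $|\mathbf{m}| \geq 1$ vanishes on $M_J$ (any such monomial requires at least two distinct nonzero $z_L$'s), this reduces to the pointwise identity $z_J G^{(1)}_{J,0}(|z_J|^2) \equiv 0$, hence $G^{(1)}_{J,0} \equiv 0$ for every $J$. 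For the $l=0$, $|\mathbf{m}|=1$ purely-$z$ terms the submanifold argument is vacuous, and here I would argue by direct computation: the constant coefficient $a^{(0)}_{J,\mathbf{m}}(0)$ of $z_L \bar z_K$ in $E^{0}$ already vanishes because $\langle \phi_L | \phi_K \rangle \neq 0$ with $L\neq K$ only when $|L - K| = n$, in which case $\omega_L^2 + \tilde\omega_L \tilde\omega_K = 0$. The remaining contributions of the form $|z_J|^{2k} z_L \bar z_K$ must then be absorbed by the leading Darboux correction $dE^{0}\cdot \tilde{\mathfrak{F}}^D$; this is verified by computing $\mathcal{X}^0$ from $i_{\mathcal{X}^0}\Omega_0 = -\Gamma$ with the formula for $\Gamma$ in Lemma \ref{lem:1forms}, and matching coefficients. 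The bound $|\Gamma| \lesssim |z|(|\mathbf{Z}| + \|\Xi\|)$ from \eqref{eq:1forms2}---which descends from the $|z|$-prefactor in \eqref{eq:propdar}---guarantees that every cancellation term is pushed either into a $|\mathbf{m}| \geq 2$ slot or carries a $\Xi$ factor, yielding the expansion \eqref{eq:enexp10} with $l \geq 1$.
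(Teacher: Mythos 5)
Your structural stage (Taylor pullback of $E^{0}$ by $\mathfrak{F}^{D}=\mathrm{Id}+\tilde{\mathfrak{F}}^{D}$, plus the estimate \eqref{eq:propdar}) is exactly what the paper does, and your treatment of the $l=0$ linear-in-$\Xi$ terms via $\dot\Xi\equiv 0$ on the invariant submanifold $M_J$ is also the paper's argument.

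However, there is a genuine gap in your handling of the $l=0$, $|\mathbf{m}|=1$ purely-$z$ terms. You claim ``the submanifold argument is vacuous'' there, but that is false: the same invariance of the bound-state orbit $M_J=\{z_K=0\ (K\neq J),\ \Xi=0\}$ that forced $\nabla_{\Xi}E^D|_{M_J}=0$ also forces $\dot z_K\equiv 0$ for every $K\neq J$, hence via \eqref{eq:hamsys1} it forces $\partial_{\bar z_K}E^D|_{M_J}=0$. Since (after the same index-shuffling you can do to normalize $\mathbf{Z}^{\mathbf{m}}=z_J\bar z_K$, pushing the remaining cases into $|\mathbf{m}|\geq 2$ using $a_{J\mathbf{m}}(0)=0$) the term $z_J\bar z_K a^{(1)}_{J\mathbf{m}}(|z_J|^2)$ contributes $z_J a^{(1)}_{J\mathbf{m}}(|z_J|^2)$ to $\partial_{\bar z_K}E^D$ on $M_J$, this identity kills $a^{(1)}_{J\mathbf{m}}$ outright. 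This is precisely the paper's argument. By contrast, your proposed replacement---computing $\mathcal{X}^0$ from $i_{\mathcal{X}^0}\Omega_0=-\Gamma$ and ``matching coefficients''---is never actually carried out, and it is not clear it is easier: you would have to show the full Darboux flow (not just its first-order term $\mathcal{X}^0$) produces no new $|z_J|^{2k}z_L\bar z_K$ terms, which is exactly the uniform-in-$|z_J|^2$ statement you were trying to avoid. Your observation that the constant coefficient $a^{(0)}_{0\mathbf{m}}$ in $E^0$ vanishes because $\omega_L^2+\tilde\omega_L\tilde\omega_K=0$ when $|L-K|=n$ is a valid and slightly more careful version of the paper's ``orthogonality'' remark, but it addresses only the constant coefficient, not the full $|z_J|^2$-dependent family.
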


\begin{remark}
The only difference between \eqref{eq:enexp1} and \eqref{eq:enexp10} are that in the second line the terms with $l=0$ vanishes in \eqref{eq:enexp10}.
\end{remark}
\proof
First, we can write $E^D(z,\Xi)=E(\widetilde\Psi[z,\Xi]+B^{-\frac{\sigma_3}{2}}\Xi)$ for  $\widetilde\Psi[z,\Xi]=(\mathfrak F^D)^*\mathcal F(z,\Xi)-B^{-\frac{\sigma_3}{2}}\Xi$.
Notice that $\widetilde \Psi \in C^{\omega}(D_{\C^2n\times \mathbf \Sigma^{-r_0}}(0,d_2),\Sigma^{r_0})$.
Therefore, the proof of the expansion becomes completely parallel to the proof of \eqref{lem:EnExp}.

The only nontrivial thing remaining is the absence of the terms with $l=0$ in \eqref{eq:enexp10}.
To show this  fix $J$ and
 notice that by \eqref{eq:propdar}  a solution of \eqref{eq:NKGE} with initial value $(z,\Xi)$ (in the Darboux coordinate) with $\Xi=0$ and $z_K=0$ (for $K\neq J$) is the nonlinear bound state given in Proposition \ref{prop:bddst}.
Therefore,
if we have $\Xi(0)=0$ and $z_K(0)=0$ ($K\neq J$), we have $\Xi(t)=0$ and $z_K(t)=0$ ($K\neq J$) for all times.
In particular, we have $\left.\frac{d}{dt}\right|_{t=0} \Xi(t)=0$ and $\left.\frac{d}{dt}\right|_{t=0} z_K(t)=0$.

\noindent We first  show $z_JG_{J0}^{(1)}(|z_J|^2)=0$ for all $z_J$.
Suppose $z_JG_{J0}^{(1)}(|z_J|^2)\neq 0$ for some $z_J$.
Then, taking the initial data such as $\Xi(0)=0$, $z_K(0)=0$ ($K\neq J$) and $z_J(0)=z_J$, we have
\begin{align*}
\left.\frac{d}{dt}\right|_{t=0} \Xi(t)=-\im z_JG_{J0}(|z_J|^2)\neq 0,
\end{align*}
  contradicting $\Xi(t)=0$   for all times.

\noindent Next, we show $a_{J\mathbf m}^{(1)}(|z_J|^2)=0$ for all $|\mathbf m|=1$.
Notice that $a_{0\mathbf m}^{(1)}=0$ since $E^{0}$ has no such term by the orthogonality of $\phi_J$ and $\phi_K$ ($J\neq K$).
Furthermore, setting $\mathbf Z^{\mathbf m}=z^L\bar z^K$, we can assume that $L=J$ or $K=J$.
Indeed, suppose $L\neq J$ and  $K=J$. Notice that we have just shown $a_{J\mathbf m}^{(1)}(|z_J|^2) \left | _{z_J=0}\right . =0$
Then we can write $\mathbf Z^{\mathbf m}a_{J\mathbf m}^{(1)}(|z_J|^2)=(z_L\bar z_J)(z_J \bar z_K)(|z_J|^{-2}a_{J\mathbf m}(|z_J|^2))$ and move this    among  the terms with $|\mathbf m|=2$.

\noindent So, without loss of generality, we can assume $L=J$.
Now, suppose that $a_{J\mathbf m}(|z_J|^2)\neq 0$.
Then, taking the initial data such as $\Xi(0)=0$, $z_K(0)=0$ ($K\neq J$) and $z_J(0)=z_J$, we have
\begin{align*}
2\im\left.\frac{d}{dt}\right|_{t=0} z_K(t)=\widetilde{\omega} _{K}^{-1}z_Ja_{J\mathbf m}^{(1)}(|z_J|^2)\neq 0
\end{align*}
contradicting $z_K(t)=0$   for all times.
\qed

\bigskip

To extract an effective Hamiltonian  we cancel as many
terms as possible  from \eqref{eq:enexp10} by means of a normal forms argument.
   The following result is proved in \cite{CM1}.
\begin{proposition}[Birkhoff normal forms]
\label{th:main}  Assume (H1)--(H3).
There exists $r_B=r_B(r_0)$ and $d_B>0$ with $r_B(r_0)     \stackrel{r_0 \to \infty}{\rightarrow} \infty $ such that there exists $\tilde {\mathfrak F}^B=(\tilde {\mathfrak F}^B_z,\tilde {\mathfrak F}^B_\Xi) \in C^\omega(D_{\C^{2n}\times \mathbf \Sigma^{-r_B}}(0,d_B),\C ^{2n}\times \mathbf{ \Sigma}^{r_B})
$ such that
\begin{align}\label{eq:propbir}
|\tilde {\mathfrak F}^B_z(z,\Xi)|+\|\tilde {\mathfrak F}^B_\Xi(z,\Xi)\|_{\mathbf \Sigma^{r_B}}\leq c|z|\(|\mathbf Z|+\|\Xi\|_{\mathbf \Sigma^{-r_B}}\),
\end{align}
and $\mathfrak F^B:=\mathrm{Id}+\tilde{ \mathfrak F}^B$ satisfies $\(\mathfrak F^B\)^* \Omega_0= \Omega_0$.
In addition, we have $\mathfrak F^B(e^{\im \theta}z,e^{\im \theta}\Xi)=e^{\im \theta}\mathfrak F^B(z,\Xi)$.
Further, setting $E^B:=(\mathfrak F^B)^* E^D$, we have
\begin{align}
&E^B (z,\Xi)=     \sum _{J=1}^{2n}E (\Phi_J[z_J])   +   \langle B \Xi |  {\Xi} \rangle+ E _P( B^{-\frac{1}{2}}\eta)
       \nonumber
       \\&
 +
	  \sum _{J =0}^{2n}\sum _{
	  \textbf{m}\in \mathcal M_0(2N+4) }  	\textbf{Z}^{\textbf{m}}   a_{J \textbf{m} }( |z_J |^2 )+
   \sum _{J=1}^{2n} 	 \sum _{
	 \mathbf m \in \mathcal M(2N+3)  }
        \langle
  {z}_J \textbf{Z}^{\textbf{m}} G_{J\textbf{m}}(|z_J|^2  )| \Xi \rangle  + \resto,  \label{eq:enexp41}
\end{align}
where
\begin{align}
\resto =& \sum _{l =2N+4} ^{\infty}\sum _{J =0}^{2n}	 \sum _{
	  |\textbf{m}|=l+1 }  \textbf{Z}^{\textbf{m}}   a_{J \textbf{m} }^{}( |z_J |^2 )+
   \sum _{l =2N+4} ^{\infty}\sum _{J=1}^{2n} 	 \sum _{
	  |\textbf{m}|=l  }
        \langle
  {z}_J \textbf{Z}^{\textbf{m}} G_{J\textbf{m}}(|z_J|^2  )| \Xi \rangle  \\&
\nonumber
+ \sum _{ i+j  = 2}        \langle  G_{2 ij }^{} ( z,\Xi )| (B^{-\frac{1}{2}}\eta )^{   i} \overline{B^{-\frac{1}{2}}\eta} ^j\rangle
+  \sum _{ i+j  = 3}   \langle G_{3 ij }^{} ( z,\Xi )|   B^{-\frac{1}{2}}\eta ^{   i} \overline{B^{-\frac{1}{2}}\eta} ^j\rangle
\nonumber   +\resto^{B},
\end{align}
with $a_{J\mathbf m}$, $G_{J\mathbf m}$, $G_{2ij}$, $G_{3ij}^{}$ and $\resto^{B}$ satisfies the condition (1)--(3) in Lemma \ref{lem:EnExp}.

\end{proposition}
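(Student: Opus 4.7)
The proof follows a standard Birkhoff normal form procedure adapted to the coupling between discrete and continuous spectra, mirroring the strategy of Theorem 5.9 in \cite{CM1}. My plan is to iteratively eliminate non-resonant monomials in \eqref{eq:enexp10} by successive canonical transformations, each realized as the time-$1$ flow of an auxiliary Hamiltonian $\chi$ with respect to $\Omega_0$. Writing $H_0 := \sum_J \widetilde{\omega}_J |z_J|^2 + \langle B\Xi | \Xi\rangle$ for the quadratic part of $E^D$, each $\chi$ is chosen to solve the cohomological equation $\{\chi, H_0\} = (\text{term to cancel})$. Because the flow is symplectic with respect to $\Omega_0$, the composition $\mathfrak{F}^B$ of the individual steps automatically satisfies $(\mathfrak{F}^B)^*\Omega_0=\Omega_0$.

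I would then set up an induction on the total weight of monomials. At stage $k$, assuming that all offending monomials of weight $\leq k$ have been removed, I examine the degree-$(k{+}1)$ obstructions. The cohomological equation splits naturally into two families. For a pure monomial $a\,\mathbf{Z}^{\mathbf m}$ with $\mathbf m\notin\mathcal M_0$, the bracket with $H_0$ scales the coefficient by $\im\sum m_{LJ}(\widetilde\omega_L-\widetilde\omega_J)\neq 0$, so $\chi$ is recovered by a scalar division. For a mixed monomial $\langle z_J \mathbf Z^{\mathbf m} G | \Xi\rangle$ with $\mathbf m\notin\mathcal M_K$, the cohomological equation becomes an operator equation of the schematic form $(\widetilde\omega_J+\sum m_{LJ}(\widetilde\omega_L-\widetilde\omega_J)-\sigma_3 B) g = G$ on $\mathbf\Sigma^r_c$; the very definition of $\mathcal M_K$ places the scalar shift strictly inside $(-m,m)$, hence away from the spectrum of $\sigma_3 B$, and hypothesis (H3) keeps it uniformly so. Both resolvents therefore act continuously on weighted Sobolev spaces and produce a $\chi$ of class $C^\omega$ with the required smoothness.

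Given $\chi$ at each stage, the time-$1$ flow $\phi^1_\chi$ is a canonical diffeomorphism of a neighborhood of $0$, and its gauge equivariance is inherited from the gauge invariance of each offending monomial (which in turn comes from the gauge property of $E^D$ established via $\mathfrak{F}^D$ and $\mathcal F$). The size estimate \eqref{eq:propbir} is then read off from the explicit form of $\chi$: every monomial being removed contains at least one factor $z_J\bar z_K$ with $J\neq K$ or one factor of $\Xi$, and this factor persists in the Lie series expansion of $\phi^1_\chi-\mathrm{Id}$, giving precisely the prefactor $|z|(|\mathbf Z|+\|\Xi\|_{\mathbf\Sigma^{-r_B}})$. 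When composing the transformations one generates new higher-order monomials; here Lemma \ref{lem:comb1}(3) is the decisive combinatorial input, since any monomial of weight $\geq 2N+3$ can be factored as a product of one in $\mathcal M_K$ times one in $\mathcal M_S$, and is therefore absorbed into $\resto$ with the quantitative bounds of Lemma \ref{lem:EnExp}.

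The main technical obstacle will be the bookkeeping of the loss of regularity $r_0\mapsto r_B(r_0)$ across the finitely many induction steps while ensuring that the coefficients $a_{J\mathbf m}$, $G_{J\mathbf m}$, $G_{2ij}$, $G_{3ij}$ and the remainder $\resto^B$ all remain in the Schwartz-like classes of Lemma \ref{lem:EnExp}. Since each cohomological solve involves at worst a scalar division or a resolvent of $\sigma_3 B$ acting continuously on every $\mathbf\Sigma^r$, each step loses only a bounded number of derivatives, and the iteration terminates after a finite number of steps dictated by the thresholds $2N+3$ and $2N+4$ in Definition \ref{def:setM}. The total loss is therefore finite and $r_B(r_0)\to\infty$ as $r_0\to\infty$. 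Combined with the gauge-covariance check and the observation that the cohomological equations are solvable precisely off $\mathcal M_0\cup\mathcal M$, this yields \eqref{eq:enexp41}, and the proposition reduces to an invocation of Theorem 5.9 of \cite{CM1} once Lemma \ref{lem:KExp1} and Lemma \ref{lem:comb1} have been put in place.
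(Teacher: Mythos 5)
The paper does not actually prove Proposition~\ref{th:main}: it is stated as a direct citation of Theorem~5.9 of \cite{CM1} (the text reads ``The following result is proved in \cite{CM1}'' followed by an immediate \qed, and the introduction likewise says ``applying Theorem 5.9 \cite{CM1} we obtain a hamiltonian somewhat similar to \eqref{model1}''). Your proposal is therefore a reconstruction of the argument in the cited reference rather than something one can compare line-by-line with this paper.

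As such a reconstruction, your sketch is structurally sound and matches the standard Birkhoff-normal-form mechanism: generating functions $\chi$ solving cohomological equations for $\Omega_0$, with a scalar-division branch for pure $\mathbf Z^{\mathbf m}$ monomials with $\mathbf m\notin\mathcal M_0$ and a resolvent branch for the mixed $\langle z_J\mathbf Z^{\mathbf m}G|\Xi\rangle$ terms with $\mathbf m\notin\mathcal M_J$, (H3) guaranteeing that the frequency shift stays strictly in $(-m,m)$ so $R_{\sigma_3 B}$ is bounded on $\mathbf\Sigma^r_c$, each flow being a symplectomorphism for $\Omega_0$ and hence preserving it, gauge covariance inherited from that of $E^D$, and Lemma~\ref{lem:comb1}(3) pushing all weight-$\geq 2N+3$ monomials generated by the compositions into $\resto$. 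You also correctly note the essential role of Lemma~\ref{lem:KExp1}, whose cancellation at $l=0$ makes the factor $|z|(|\mathbf Z|+\|\Xi\|_{\mathbf\Sigma^{-r_B}})$ in \eqref{eq:propbir} possible, and the finite iteration giving the derivative-loss budget $r_0\mapsto r_B(r_0)$. Two imprecisions worth flagging: (i) your $H_0:=\sum_J\widetilde\omega_J|z_J|^2+\langle B\Xi|\Xi\rangle$ is not literally the quadratic part of $E^D$ (which to leading order is $2\sum_J\omega_J^2|z_J|^2+\langle B\Xi|\Xi\rangle$); what matters is the linear vector field it generates under $\Omega_0$ of \eqref{eq:Omega0}, and after division by $2\widetilde\omega_J(1+a_J)$ the frequencies are indeed $\widetilde\omega_J$, but the notation as written conflates Hamiltonian and frequency; (ii) $\Omega_0$ is not constant-coefficient because of the $(1+a_J(|z_J|^2))$ factors, so the cohomological equation is not strictly a constant-coefficient division; since $a_J=O(|z_J|^2)$ these contributions get pushed to higher order and absorbed by the iteration, but the argument should say so explicitly. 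Neither point affects the overall correctness of the plan.
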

\qed

 We
 now make a further change of coordinates introducing $\Xi  =A\Theta $
 where  \begin{equation}\label{eq:matA}
   \text{$  \mathbb{J}   =-A \im \sigma _3  A^{-1}   $
for $A= 2  ^{-\frac{1}{2}}\begin{pmatrix}   \im   &
-\im   \\
1  &   1
 \end{pmatrix}$.}
 \end{equation}
 We have $A^*=A ^{-1}$.
So now we use $(z, \Theta )$ as coordinates. In this new coordinate system
the Hamiltonian \eqref{eq:enexp41}  takes the form

\begin{equation}  \label{eq:enexp411}    \begin{aligned} &
  \mathcal{H}  (z,\Theta ) =  \sum _{J=1} ^{2n} E(\Phi_J[ z_J])+  \langle  B \Theta |  {\Theta} \rangle  + Z  (z,\mathbf{Z},\Theta )  +\resto,  \text{ where }   Z= \mathcal{Z}_0+\mathcal{Z}_1,
\\&   \mathcal{Z}_0    = \sum _{   \mathbf{m}  \in \mathcal{M}_{0}(2N+4 )  }\sum_{J=0}^{2n} \mathbf{Z}^{\textbf{m}}   a_{ J \textbf{m} }( |z _J|^2 ),
  \\& \mathcal{Z}_1=\sum _{J =1}^{2n}	\sum _{   \mathbf{m}  \in \mathcal{M}_{J }(2N+3)  }   \langle
z_J  \mathbf{Z}^{\textbf{m}}  G _{J \textbf{m}}(|z_J |^2  )| \Theta \rangle
  = \sum _{(\mu , \nu ) \in M}\<z^{\mu}\overline{z}^{\nu}
  G _{\mu\nu}(|z_{\tilde J}|^2  )| \Theta \>   ,
     \end{aligned}
\end{equation}
where $\tilde J=\tilde J(\mu,\nu)$ is determined from the correspondence between $\mathcal M_J(2N+3)$ and $M$.

\noindent The symplectic form $\Omega _0$ takes the form \begin{equation}\label{eq:Omega00}   \begin{aligned} 2\im \widetilde{\omega} _J        (1 + a_J(|z_J|^2)   ) dz_J
\wedge d\overline{z}_{J }  +2\Im
 \langle \im
\sigma _3 d\Theta |   d{\Theta} \rangle ,
\end{aligned}\end{equation}
and system \eqref{eq:hamsys1} becomes, for  $1 +   \varpi  _J(|z_J|^2) :=  (1 + a_J(|z_J|^2) ^{-1}$,  \begin{equation} \label{eq:hamsys2}\begin{aligned}        &   \im \dot z_J       =  \frac 1 2 \widetilde{\omega} _{J}^{-1}(1 +   \varpi  _J(|z_J|^2))
 \partial _{\overline{J}}\mathcal{H}   \quad  ,  \quad    \im  \dot \Theta    = \frac 1 2  \sigma _3 \nabla  _{{\Theta}  } \mathcal{H}   .
\end{aligned}\end{equation}

The final step needed to prove Theorem \ref{thm:small en}  is the following.
\begin{proposition}[Main Estimates]\label{thm:mainbounds}
There exist $\epsilon_0>0$ and $C_0>0$ s.t.\ if the constant  $0<\epsilon   $  of Theorem \ref{thm:small en} satisfies $\epsilon<\epsilon_0$,    for $I= [0,\infty )$ and $C=C_0$    we have:
\begin{align}
&   \|  \Theta \| _{L^p_t([0,\infty ),W^{ \frac{{1}}{q}  - \frac{{1}}{p} -\frac{1}{2} ,q}_x  \times W^{ \frac{{1}}{q}  - \frac{{1}}{p} -\frac{1}{2},q}_x)}\le
  C   \epsilon \text{ for all admissible pairs $(p,q)$,}
  \label{Strichartzradiation}
\\& \| z^{\mu}\overline{z}^{\nu} \| _{L^2_t[0,\infty )}\le
  C   \epsilon \text{ for all   $(\mu , \nu ) \in M_{min}$,} \label{L^2discrete}\\& \| z _J  \|
  _{W ^{1,\infty} _t  [0,\infty  )}\le
  C   \epsilon \text{ for all   $J\in \{ 1, \dots ,  2{n}\}$ } \label{L^inftydiscrete}
   .
\end{align}
Furthermore,  there exists $\rho  _+\in [0,\infty ) ^{2n}$ s.t.   there exist  a   $j_0$  with $\rho_{+j}=0$ for $j\neq j_0$,
and   there exists $\eta _+\in H^1$   s.t.
  $| \rho  _+ - |z(0)| | \le C   \epsilon $ and $\Theta  _+\in H ^{\frac{1}{2}}\times  H ^{\frac{1}{2}}$
with $\|  \Theta   _+\| _{H ^{\frac{1}{2}}\times  H ^{\frac{1}{2}}}\le C    \epsilon $, such that
\begin{equation}\label{eq:small en31}
\begin{aligned}&     \lim_{t\to +\infty}\| \Theta (t )-
e^{\im \sigma _3t \sqrt{-\Delta +m^2} }\Theta   _+    \|_{H ^{\frac{1}{2}}\times  H ^{\frac{1}{2}}}=0  \quad  , \quad
  \lim_{t\to +\infty} |z_j(t)|  =\rho_{+j}  .
\end{aligned}
\end{equation}

\end{proposition}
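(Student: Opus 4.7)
The plan is to run a standard continuity (bootstrap) argument on the maximal interval $[0,T^*]$ on which enlarged versions of \eqref{Strichartzradiation}--\eqref{L^inftydiscrete} hold with $2C_0$ in place of $C_0$, and to close the three estimates with the original $C_0$ provided $\epsilon_0$ is small, so that $T^* = \infty$; the asymptotic statements in \eqref{eq:small en31} will then follow by integration. First I would apply Strichartz estimates for the Klein--Gordon propagator $e^{-\im t \sigma_3 B}$ restricted to $\mathbf{L}^2_c$ (collected in \cite{bambusicuccagna} from \cite{danconafanelli,Y1}) to the equation $\im \dot \Theta = \sigma_3 B \Theta + \tfrac12 \sigma_3 (\nabla_\Theta Z + \nabla_\Theta \resto)$ from \eqref{eq:hamsys2}. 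The linear data contributes $O(\epsilon)$; the source $\nabla_\Theta \mathcal{Z}_1 = \sum_{(\mu,\nu)\in M} z^\mu \bar z^\nu G_{\mu\nu}(|z_{\tilde J}|^2)$ is bounded in the dual Strichartz norm by using the $L^2_t$ control of the monomials $z^\mu \bar z^\nu$: every $(\mu,\nu)\in M$ factors, by the definition \eqref{eq:setMhat}, as an element of $M_{min}$ multiplied by a monomial in $z$ bounded via \eqref{L^inftydiscrete}. The remainder $\resto$ is handled by the bootstrap $L^\infty_t$ control of $z$ together with Sobolev and multilinear Strichartz estimates on $B^{-1/2}\eta$, giving a gain of $\epsilon^2$.

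The heart of the proof is the Fermi Golden Rule step giving \eqref{L^2discrete}. Following the heuristic around \eqref{intstr12}, I would introduce the ansatz
\begin{equation*}
\Theta = \theta + \Theta_0, \qquad \Theta_0 := \sum_{(\mu,\nu)\in M_{min}} \bigl(\sigma_3 B - (\nu - \mu)\cdot \widetilde{\omega} - \im 0\bigr)^{-1} z^\mu \bar z^\nu G_{\mu\nu},
\end{equation*}
noting that the resolvent is well defined on weighted spaces because $|(\nu-\mu)\cdot\widetilde\omega| > m$ for $(\mu,\nu)\in M$, which is exactly how $\mathcal{M}_K(r)$ is defined in \eqref{eq:setM0}. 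Substituting into the equations for $z_J$ in \eqref{eq:hamsys2}, computing $\frac{d}{dt}\sum_J \widetilde\omega_J|z_J|^2$, and using the Plemelj formula $R^+ - R^- = 2\im \pi \delta$ to extract the imaginary part, I expect to obtain an identity of the form
\begin{equation*}
\frac{d}{dt}\sum_J \widetilde \omega_J |z_J|^2 = -\sum_{L\in \Lambda} L \langle G_L | \delta(-\sigma_3 B - L)\sigma_3 G_L\rangle + \mathcal{R}(t),
\end{equation*}
where the $G_L$ are the linear combinations appearing in (H4) and $\mathcal{R}(t)$ is an integrable error. Integrating in $t$ and invoking hypothesis (H4) promotes the leading term to a coercive bound $\ge c\sum_{(\mu,\nu)\in M_{min}} \|z^\mu \bar z^\nu\|_{L^2_t}^2$, closing \eqref{L^2discrete}.

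Once \eqref{L^2discrete} and \eqref{Strichartzradiation} are closed, the conservation of $\mathcal{H}$ gives $\|z\|_{L^\infty_t}\lesssim \epsilon$, and the equation $\dot z_J = -\im \omega_J z_J + O(|z|^2 + |z|\|\Theta\|_\ast)$ yields \eqref{L^inftydiscrete} and the second bound in \eqref{eq:small en2}. For the asymptotic statements: the monotonicity-up-to-error identity shows $\sum_J\widetilde\omega_J |z_J(t)|^2$ has a limit; by Lemma \ref{lem:comb1} every cross term $z_J \bar z_K$ with $J\neq K$ is bounded by an $M_{min}$ monomial (directly or after multiplication by bounded $z$-factors), hence lies in $L^2_t\cap L^\infty_t$, so all mixed products $|z_J(t) z_K(t)|\to 0$, forcing at most one $|z_J(t)|$ to have a nonzero limit $\rho_{+J_0}$. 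Scattering of $\Theta$ follows from \eqref{Strichartzradiation} by a Cook-type argument applied to $e^{\im t \sigma_3 B}\Theta(t)$, producing the asymptotic profile $\Theta_+\in \mathbf{H}^{1/2}$ of \eqref{eq:small en31}.

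The main obstacle I anticipate is the algebraic bookkeeping in the FGR step: verifying that after the substitution $\Theta = \theta + \Theta_0$ and after summing over $J$, the resonant contributions of the different $M_{min}$ monomials reorganize exactly into the non-negative combination assumed in (H4), and simultaneously that the error $\mathcal{R}(t)$ is genuinely integrable. The latter requires using Lemma \ref{lem:comb1}(3) to dominate every high-degree monomial $z_{J_0}\mathbf{Z}^{\mathbf{m}}$ with $|\mathbf{m}|\ge 2N+3$ by a product of two $M$-type monomials, each in $L^2_t$ by \eqref{L^2discrete}; the cross-resonances between different frequencies $L\in \Lambda$ must vanish by gauge invariance and the orthogonality properties of $\delta(-\sigma_3 B - L)$. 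Adapting the spectral calculus of $\sigma_3 B$ on $\mathbf{L}^2_c$ to play the role of $-\Delta$ in the model problem \eqref{intstr-11}--\eqref{intstr12}, in particular re-proving an analogue of \eqref{eq:termc} to see that each summand in (H4) is individually non-negative, is routine but must be executed carefully.
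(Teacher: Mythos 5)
Your overall architecture — a continuity/bootstrap on $[0,T]$, Strichartz and smoothing estimates for $\Theta$, and a Fermi Golden Rule monotonicity identity for $z$ — is the paper's architecture, and the ansatz $\Theta = \theta + \Theta_0$ (your $\Theta_0$ is, up to sign, the paper's $Y$ in \eqref{eq:def g}) is exactly the first of the paper's two variable changes. However, there are two genuine gaps.

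First, the paper performs a \emph{second} normal-form change of variable, $\zeta_J = z_J + T_J(z)$ with $T_J$ given by \eqref{eq:FGR21}, whose whole purpose is to remove the non-resonant cross-terms in \eqref{eq:FGR02} — those with $(\mu-\nu)\cdot\widetilde\omega \neq (\alpha-\beta)\cdot\widetilde\omega$. You assert instead that ``the cross-resonances between different frequencies $L\in\Lambda$ must vanish by gauge invariance and the orthogonality properties of $\delta(-\sigma_3 B - L)$.'' This is not correct: those terms do not vanish pointwise in time (the delta functions sit inside real-part resolvent pairings, not orthogonal projections), they merely oscillate. Making them integrable requires integrating by parts in time once, which is precisely what the substitution $\zeta = z + T(z)$ does; the resulting boundary and commutator contributions are absorbed into the error $\mathcal{G}_J$ of Lemma \ref{lem:chcoo}, bounded in $L^1_t$ by \eqref{4.33}. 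Without this step your identity has extra terms that are only $O(\epsilon^{2N+\cdots})$ in $L^\infty_t$, not $L^1_t$, and the integration argument does not close.

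Second, the functional you propose to differentiate, $\sum_J \widetilde\omega_J |z_J|^2$, has the wrong weight. The equation \eqref{eq:hamsys2} reads $\im\dot z_J = \tfrac12\widetilde\omega_J^{-1}(1+\varpi_J)\partial_{\bar J}\mathcal{H}$, so multiplying by $\bar z_J \widetilde\omega_J (1+a_J)$ cancels all $\widetilde\omega_J$ factors on the right and the contribution of $\mathcal{Z}_1$ produces $\sum_J(\nu_J - \mu_J) = |\nu|-|\mu| = 1$, not the frequency $L = \widetilde\omega\cdot(\nu-\mu)$ that you need to match (H4). The paper instead multiplies by $\bar\zeta_J \widetilde\omega_J^2(1+a_J)$, differentiating $\tfrac14 A_J(|\zeta_J|^2)$ with $A_J(s)=4\int_0^s\widetilde\omega_J^2(1+a_J)$, which is $\approx \omega_J^2 |\zeta_J|^2$ — a sign-definite, coercive quantity (since $\widetilde\omega_J^2 = \omega_J^2 > 0$), and the surviving single power of $\widetilde\omega_J$ on the right combines with $\nu_J - \mu_J$ to give $L$. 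With your weight the left side is not sign-definite (since $\widetilde\omega_J = \pm\omega_J$) and the right side does not reorganize into the quadratic form of hypothesis (H4). Both the coercivity and the algebraic identification with (H4) fail.

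The remaining parts of your sketch — the Strichartz step using $M_{min}$ factorization, the use of Lemma \ref{lem:comb1}(3) to control high-degree monomials, the argument that at most one $|z_J|$ survives because cross products $z_J\bar z_K$ are $L^2_t \cap C_0$, and a Cook-type scattering argument for $\Theta$ (up to composing with the wave operator intertwining $B$ and $\sqrt{-\Delta+m^2}$) — are correct and match the paper.
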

\textit{Proof of Theor.\ref{thm:small en}.}
Before proving Prop.\ref{thm:mainbounds} we will show that it implies Theor.\ref{thm:small en}. The proof is similar to the analogous step in \cite{CM1}. Denote by $(z',\Xi ')$  the initial coordinate system.   By  \eqref{eq:propdar} and \eqref{eq:propbir}, we have
\begin{equation}\label{eq:diff21}  \begin{aligned}
 |z' -z|+  \|\Xi ' -A\Theta\|_{\Sigma^r} \leq c |z| \(|\mathbf Z|+\|\Theta\|_{\Sigma^{-r}}\).
\end{aligned}\end{equation}
     \eqref{eq:small en31} implies  $\lim _{t\to + \infty}\mathbf{Z}(t)=0$ and   by elementary arguments  for $s>3/2$ we have \begin{equation} \label{eq:w-conv}
\begin{aligned} &  \lim _{t\to  + \infty} \|  e^{ \im  \sigma _3 t \sqrt{-\Delta +m^2}  }\Theta  _+\| _{L^{2,-s}(\R ^3)}=0   \text{   for any $\Xi  _+\in L^2$}.
\end{aligned} \end{equation}
     These two limits,
and \eqref{eq:small en31} and \eqref{eq:diff21} imply
\begin{equation*}  \begin{aligned}
\lim_{t\to +\infty}|z'(t) -z(t)|+  \|\Xi '(t) -A\Theta(t)\|_{\Sigma^r} =0.
\end{aligned}\end{equation*}
Then we have the  limit
 \begin{equation}\label{eq:small en33}
 \lim_{t\to +\infty} |z_j'(t)| = \lim_{t\to +\infty} |z_j (t)|   =\rho  _{+j} .
\end{equation}
Next, in $H^1\times L^2$   by \eqref{eq:matA} we have
\begin{equation*}
\begin{aligned}&    0= \lim_{t\to +\infty}  B ^{-\frac{\sigma _3}{2}} [  \Xi ' (t )-   Ae^{ \im  \sigma _3 t \sqrt{-\Delta +m^2}  }\Theta  _+ ]=  \lim_{t\to +\infty}  B ^{-\frac{\sigma _3}{2}} [  \Xi ' (t )-   e^{ -\mathbb{J}  t \sqrt{-\Delta +m^2}  }\Xi _+ ]   .
\end{aligned}
\end{equation*}
Setting $\Xi _+:=A\Theta  _+$ write
\begin{equation*}
\begin{aligned}&     B ^{-\frac{\sigma _3}{2}}    e^{ -\mathbb{J}  t \sqrt{-\Delta +m^2}  }\Xi _+ =  e^{ -\mathbb{J}  t \sqrt{-\Delta +m^2}  } e^{  \mathbb{J}  t \sqrt{-\Delta +m^2}  }  e^{-  \mathbb{J}  t B  }  B ^{-\frac{\sigma _3}{2}}   e^{  \mathbb{J}  t B  }   e^{ -\mathbb{J}  t \sqrt{-\Delta +m^2}  }\Xi _+
\end{aligned}
\end{equation*}
and consider   the wave operators
\begin{equation*}
    W    = s-\lim_{t\to +\infty}    e^{  \mathbb{J}  t B  }
 e^{ -\mathbb{J}  t \sqrt{-\Delta +m^2}  }   \text{  and }  Z    = s-\lim_{t\to +\infty} e^{ \mathbb{J}  t \sqrt{-\Delta +m^2}  }   e^{ - \mathbb{J}  t B  } .
 \end{equation*}
Then we have
\begin{equation*}\begin{aligned}&    \lim_{t\to +\infty}[ B ^{-\frac{\sigma _3}{2}}    e^{ -\mathbb{J}  t \sqrt{-\Delta +m^2}  }\Xi _+ -  e^{ -\mathbb{J}  t \sqrt{-\Delta +m^2}  } Z  B ^{-\frac{\sigma _3}{2}}   W \Xi _+]=0.
\end{aligned}
\end{equation*}
Since $Z  B ^{-\frac{\sigma _3}{2}}   W \Xi _+=(-\Delta +m^2) ^{- \frac{\sigma _3}{4}} \Xi _+$
we conclude that   in $H ^{\frac{1}{2}}\times H ^{\frac{1}{2}}$
\begin{equation*}
\begin{aligned}&   \lim _{t\to \infty} [  B ^{-\frac{\sigma _3}{2}} \Xi ' (t )-
 e^{ -\mathbb{J}  t  \sqrt{-\Delta +m^2}   }    (-\Delta +m^2) ^{-\frac{\sigma _3}{4}}   \Xi _+] =0
   .
\end{aligned}
\end{equation*}
On the other hand  for
\begin{equation*}
\begin{aligned}&         u  _+   = (-\Delta +m^2) ^{ -\frac{1}{4}}   (\Xi _+)_1 \in H^1     \text{  and }  v_+=(-\Delta +m^2) ^{ \frac{1}{4}}   (\Xi _+)_2
v  _+   \in L^2   , \end{aligned}
\end{equation*}
 where $  \Xi _+=((\Xi _+)_1, (\Xi _+)_2)$,
 we have
\begin{equation*}
\begin{aligned}&   e^{ -\mathbb{J}  t  \sqrt{-\Delta +m^2}   }    (-\Delta +m^2) ^{-\frac{\sigma _3}{4}}   \Xi _+  =     \cos (t\sqrt{-\Delta +m^2})        \begin{pmatrix}     u  _+  \\
v  _+   \end{pmatrix}     - \begin{pmatrix}   -   \frac {\sin (t \sqrt{-\Delta +m^2})
} { \sqrt{-\Delta +m^2}}     v  _+  \\
  \sqrt{-\Delta +m^2}   \sin (t\sqrt{-\Delta +m^2})   u  _+   \end{pmatrix}
\end{aligned}
\end{equation*}
which yields  the 1st line in \eqref{eq:small en3}.

\noindent We have formula \eqref{eq:small en1} with $z$ replaced by $z'$ and
\begin{equation*}
\begin{aligned}&   ({\eta}   ,{\xi}  ) =  R[z']\Xi ' ={B}^{-\frac{\sigma _3}{2}}A \Theta +  \mathbf{A}\text{ where} \\& \mathbf{A} =(R[z']-{B}^{-\frac{\sigma _3}{2}})A\Theta +R[z'](\Xi'-A\Theta) .
\end{aligned}
\end{equation*}
 Then by \eqref{eq:small en31} and \eqref{eq:small en33} we conclude the $\mathbf{A}$ satisfies \eqref{eq:small en4}.

\noindent  Finally we prove the 2nd line \eqref{eq:small en2}. We have
\begin{equation*}
\begin{aligned}&          \dot z _J' +\im \omega _J z_J'  =  \dot z _J  +\im \omega _J z_J + \frac{d}{dt}\widetilde{\mathfrak F}_z(z, \Theta  ) +  \im \omega_J\widetilde{\mathfrak F}_z(z, \Theta  )  ,
\end{aligned}
\end{equation*}
where $\widetilde{\mathfrak F}:=\mathfrak F^B\circ \mathfrak F^D -\mathrm{Id}$.
We need to prove that this is $O(\epsilon ^2) $. We have   $ \dot z _J  +\im \omega _J z_J= O(\epsilon ^2) $  by \eqref{eq:FGR01} below and we have  $ \widetilde{\mathfrak F}_z(z, \Theta  ) =  O(\epsilon ^2)$ by Propositions \ref{prop:darboux} and \ref{th:main}.
For the third term, we have
\begin{equation*}
\begin{aligned}&        \frac{d}{dt}  \widetilde{\mathfrak F}_z(z, \Theta  ) =  \sum_{J=1}^{2n}\sum_{A=R,I}\partial_{JA}\widetilde{\mathfrak F}_z(z, \Theta  )\dot z_{JA} +d_\Theta\widetilde{\mathfrak F}_z(z, \Theta  ) \cdot\dot \Theta ,
\end{aligned}
\end{equation*}
with $ d _\Theta \widetilde{\mathfrak F}_z(z, \Theta  )$ the partial derivative in $\Theta$. Then, by
\begin{align*}
|\partial_{JA}\widetilde{\mathfrak F}_z(z, \Theta  )|+\|d_\Theta\widetilde{\mathfrak F}_z(z, \Theta  )\|_{\Sigma^r}\leq C (|z|+\|\Theta\|_{\Sigma^{-r}}),
\end{align*}
and equations \eqref{eq:eq f}  and \eqref{eq:FGR01}   below, we have $\frac{d}{dt}  \widetilde{\mathfrak F}_z(z, \Theta  )= O(\epsilon ^2) $.
 This yields
the inequality claimed in the  second line in  \eqref{eq:small en2}. \qed

\bigskip
By a standard argument
\eqref{Strichartzradiation}--\eqref{L^inftydiscrete} for  $I= [0,\infty )$ are a consequence of the following Proposition.

\begin{proposition}\label{prop:mainbounds} There exists  a  constant $c_0>0$  such that
for any  $C_0>c_0$ there is a value    $\epsilon _0= \epsilon _0(C_0)   $ such that   if for some $T>0$ we have
 \begin{align}
&   \|  \Theta \| _{L^p_t([0,T],W^{ {1}/{q}- {1}/{p} ,q}_x)}\le
  C _0\epsilon \text{ for all admissible pairs $(p,q)$} \label{4.4a}
\\& \| z^{\mu}\overline{z}^{\nu}\| _{L^2_t([0,T])}\le
 C_0 \epsilon \text{ for all $(\mu , \nu ) \in M_{min}$,} \label{4.4}
\end{align}
 then  \eqref{4.4a}--\eqref{4.4}
hold  for       $C=C_0/2$.
\end{proposition}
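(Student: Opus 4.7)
The plan is a standard bootstrap/continuity argument. Assuming \eqref{4.4a}--\eqref{4.4} hold on $[0,T]$ with constant $C_0$, I will improve both to $C_0/2$ by choosing $\epsilon_0$ sufficiently small relative to $C_0$; a standard continuity argument then extends to $T=\infty$. The two estimates are coupled: the Strichartz bound on $\Theta$ uses the $L^2_t$ bound on the resonant monomials to absorb the source $\mathcal Z_1$, while the FGR-based $L^2_t$ bound on the monomials uses the Strichartz and smoothing estimates for $\Theta$.

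\textbf{Strichartz for $\Theta$.} From \eqref{eq:hamsys2} and \eqref{eq:enexp411}, $\Theta$ satisfies
\begin{equation*}
\im \dot\Theta \;=\; \sigma_3 B \Theta + \tfrac{1}{2}\sigma_3 \nabla_\Theta \mathcal Z_1 + \tfrac{1}{2}\sigma_3 \nabla_\Theta \resto + \tfrac{1}{2}\sigma_3 \nabla_\Theta E_P(B^{-1/2}\eta),
\end{equation*}
and Duhamel gives $\Theta(t) = e^{-\im\sigma_3 t B}\Theta(0) + \int_0^t e^{-\im\sigma_3(t-s)B}(\cdots)\,ds$. I then apply the Strichartz and local smoothing estimates for the Klein--Gordon propagator on the continuous subspace, as collected in \cite{bambusicuccagna}. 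The linear term contributes $\lesssim \|\Theta(0)\|_{\mathbf H^{1/2}}\lesssim \epsilon$. The nonlinear sources split into three types. (a) Discrete--continuous interactions $\sum z^\mu \bar z^\nu G_{\mu\nu}$: by Lemma \ref{lem:comb1}(3) every $(\mu,\nu)\in M$ of sufficiently high order can be bounded by a product involving an $M_{\min}$-factor, so the bootstrap \eqref{4.4} controls that factor in $L^2_t$ while the remaining $z$-factors are taken in $L^\infty_t$ and the Schwartz weight $G_{\mu\nu}$ supplies the spatial decay needed for the dual Strichartz/smoothing pairing. (b) The cubic $\eta$-nonlinearity, handled by the standard cubic NLKG Strichartz analysis in 3D using \eqref{4.4a}. (c) The remainder $\resto$, subleading by the estimates (1)--(3) of Lemma \ref{lem:EnExp}. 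Together these yield $\|\Theta\|_{L^p_t W^{1/q-1/p,q}_x}\leq \tfrac{C_0}{2}\epsilon$ for $C_0$ large and $\epsilon$ small.

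\textbf{FGR bound on resonant monomials.} This is the crux. The Hamilton equation \eqref{eq:hamsys2} yields, schematically,
\begin{equation*}
\im \dot z_J \;=\; \omega_J z_J \;+\; \sum_{(\mu,\nu)\in M_J(2N+3)} \langle \Theta \,|\, z^{\mu}\bar z^{\nu} G_{J\mathbf m}\rangle \;+\; \text{higher order},
\end{equation*}
so setting $\zeta_{\mu\nu}(t) := e^{\im(\nu-\mu)\cdot\widetilde{\omega}\,t}\, z^\mu\bar z^\nu$ strips the fast oscillation. To extract dissipation I insert the resonant ansatz
\begin{equation*}
\Theta \;=\; \sum_{L\in\Lambda} (-\sigma_3 B - L - \im 0)^{-1}\, \zeta_{\mu_L\nu_L} G_{L} \;+\; g,
\end{equation*}
where $L$ ranges over the resonant frequencies $|L|>m$, $G_L$ aggregates the corresponding $G_{\mu\nu}$'s, and $g$ solves a smoother driven equation. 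Substituting back, using the Plemelj identity $R^{\pm}(\lambda) = \mathrm{P.V.}(\,\cdot\,-\lambda)^{-1} \pm \im\pi\delta(\cdot-\lambda)$, and taking imaginary parts of $\frac{d}{dt}\sum c_{\mu\nu}|\zeta_{\mu\nu}|^2$ with appropriate positive weights leads to a Lyapunov inequality
\begin{equation*}
\frac{d}{dt}\sum_{(\mu,\nu)\in M_{\min}} c_{\mu\nu}\,|z^\mu\bar z^\nu|^2 \;\leq\; -2\pi\sum_{L\in\Lambda} L\,\langle G_L\,|\,\delta(-\sigma_3 B-L)\sigma_3 G_L\rangle\,|\zeta_L|^2 \;+\; (\text{error}),
\end{equation*}
where the leading negative term is $\lesssim -c\sum|z^\mu\bar z^\nu|^2$ by hypothesis (H4). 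Integrating on $[0,T]$, using $|z(0)|\lesssim \epsilon$ for the boundary terms, and absorbing the error via the Strichartz/smoothing bounds on $g$ and the bootstrap, gives $\|z^\mu\bar z^\nu\|_{L^2_t([0,T])} \leq \tfrac{C_0}{2}\epsilon$ for all $(\mu,\nu)\in M_{\min}$.

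\textbf{Main obstacle.} The hardest step is this FGR reduction: substituting the resonant expansion for $\Theta$, identifying which monomials contribute at each resonance $L\in\Lambda$, verifying that the emerging positive bilinear form coincides exactly with the one declared in (H4), and controlling all error terms (the smoother part $g$ via Strichartz with Schwartz sources, off-resonant monomials with $(\mu-\nu)\cdot\widetilde{\omega}\notin\Lambda$ which one integrates by parts in time exploiting the non-resonance gap $>m$, the cubic $\eta$-nonlinearity, and the Birkhoff remainder $\resto$) by the bootstrap with a small constant. The combinatorial Lemma \ref{lem:comb1} is essential throughout, as it reduces arbitrary high-order monomials in $\mathbf Z$ to products involving $M_{\min}$-factors that are directly controlled by \eqref{4.4}.
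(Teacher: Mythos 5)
Your overall strategy is the right one and matches the paper in spirit: bootstrap the Strichartz bound on $\Theta$ against an FGR-based $L^2_t$ bound on the resonant monomials, insert the resonant ansatz $\Theta = -Y + g$ with $Y$ built from $R^{+}_{-\sigma_3 B}$, invoke Plemelj and (H4). But there is a substantive gap in the FGR step.

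The Lyapunov functional you propose is $\sum_{(\mu,\nu)\in M_{\min}} c_{\mu\nu}\,|z^\mu\bar z^\nu|^2$, and you differentiate it in time. This quantity has degree $2(|\mu|+|\nu|)\geq 6$ in $z$, and its derivative does not manufacture the structure that (H4) is designed to exploit. The paper's argument instead multiplies the Hamilton equation for $\dot\zeta_J$ (where $\zeta_J = z_J + T_J(z)$ is a normal-form correction, \emph{not} an integrating factor applied to a monomial) by $\overline{\zeta}_J\,\widetilde\omega_J^2(1+a_J)$ and sums in $J$; this yields $\frac{d}{dt}\sum_J A_J(|\zeta_J|^2)$ with $A_J(s)\sim 4\widetilde\omega_J^2 s$, a degree-$2$ quantity. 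Two things follow only from this specific choice. First, the ``conservative'' contributions from $E(\Phi_J[\zeta_J])$ and from $\mathcal Z_0$ cancel \emph{identically}: the paper's computation uses that $E(\Phi_J)$ depends on $|\zeta_J|^2$ alone and that $\sum_J\widetilde\omega_J(\overline z_J\partial_{\overline z_J}\mathbf Z^{\mathbf m} - z_J\partial_{z_J}\mathbf Z^{\mathbf m}) = \mathbf Z^{\mathbf m}\sum_{J,L}\widetilde\omega_J(m_{LJ}-m_{JL}) = 0$ for $\mathbf m\in\mathcal M_0$. This algebraic cancellation is tied to the weight $\widetilde\omega_J\overline z_J$ and does not occur when you pair against $\overline{z^\mu\bar z^\nu}\,\partial(z^\mu\bar z^\nu)$. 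Second, after summing over $J$ and substituting $\Theta \to -Y$, the FGR term appears with the scalar coefficient $L = \widetilde\omega\cdot(\nu-\mu)$ because $\sum_J\widetilde\omega_J(\nu_J-\mu_J) = L$; it is precisely this $L$ factor, together with the sign of $\Im R^{+}$, that makes the expression non-negative and lets (H4) apply. Your derivative of $|z^\mu\bar z^\nu|^2$ has no mechanism to generate the $L$-weighted quadratic form $\sum_L L\langle G_L|\delta(-\sigma_3B-L)\sigma_3 G_L\rangle$; the sign of the leading term is therefore not under control, and ``choosing positive weights $c_{\mu\nu}$'' does not repair this, because the issue is in the phase algebra of $\partial(z^\mu\bar z^\nu)$ against $z^\mu\bar z^\nu$, not in the normalization.

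A lesser point: your ``Strichartz for $\Theta$'' paragraph claims the bootstrap \eqref{4.4a}--\eqref{4.4} alone gives $\|\Theta\|\leq (C_0/2)\epsilon$. As stated that is not true: the dominant source $z^\mu\bar z^\nu G_{\mu\nu}$ with $(\mu,\nu)\in M_{\min}$ contributes $\lesssim c\,\|z^\mu\bar z^\nu\|_{L^2_t}\leq c\,C_0\epsilon$ with a Strichartz constant $c$ that is not small, so you only get $\|\Theta\|\lesssim \epsilon + C_0\epsilon$, which is not an improvement. What is actually provable at this stage (and is the content of the paper's Proposition \ref{Lemma:conditional4.2}) is the \emph{conditional} bound $\|\Theta\|\lesssim\epsilon + \sum_{M_{\min}}\|z^\mu\bar z^\nu\|_{L^2_t}$; only after the FGR step improves the $z$-monomial bound to $\lesssim\sqrt{1+C_0}\,\epsilon$ can you plug back in and conclude $\|\Theta\|\leq(C_0/2)\epsilon$ for $C_0$ large. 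You do say the two estimates are coupled, so the intent is correct, but the presentation of Step 1 as self-contained is misleading.
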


 \section{Proof of Proposition \ref{prop:mainbounds}} \label{sec:prop}

The first step in the proof is the following.
\begin{proposition}\label{Lemma:conditional4.2} Assume \eqref{4.4a}--\eqref{4.4}. Then there exist constants $c $ and $\epsilon _0>0$ s.t. for $ \epsilon \in (0, \epsilon _0)$ then we have
\begin{equation} \|  \Theta\| _{L^p_t([0,T],W^{ {1}/{q}- {1}/{p} ,q}_x)}\le
 c  \epsilon   + c    \sum _{(\mu , \nu )\in {M_{min}}  }|   z ^{\mu}\overline{z}  ^{\nu}  | _{L^2_t( 0,T  )}   \text{ for all admissible pairs $(p,q)$}\ .
  \label{4.5}
\end{equation}

\end{proposition}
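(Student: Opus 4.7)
The plan is to derive the equation of motion for $\Theta$ from the normal form Hamiltonian \eqref{eq:enexp411} via \eqref{eq:hamsys2}, then write $\Theta$ through Duhamel's formula for the linear propagator $e^{-\im t\sigma_3 B}P_c$ and apply the Strichartz and smoothing estimates collected in \cite{bambusicuccagna,danconafanelli} for this propagator on $\mathbf H^{1/2}_c$, using the bootstrap bounds \eqref{4.4a}--\eqref{4.4} to absorb every nonlinear source except the minimally resonant monomials, which produce the explicit right-hand side in \eqref{4.5}.

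Concretely, since $\nabla_\Theta$ annihilates $\sum_J E(\Phi_J[z_J])$ and $\mathcal Z_0$, the $\Theta$-equation reads
\begin{equation*}
\im\dot\Theta - \sigma_3 B\Theta = \sigma_3 (F_1+F_2+F_3+F_4),
\end{equation*}
where $F_1 = \sum_{(\mu,\nu)\in M} z^{\mu}\overline{z}^{\nu}G_{\mu\nu}(|z_{\tilde J}|^2)$ comes from $\mathcal Z_1$, $F_2$ is the cubic in $B^{-1/2}\eta$ coming from $E_P(B^{-1/2}\eta)$, $F_3$ collects the mixed $G_{2ij},G_{3ij}$-terms, and $F_4 = \nabla_\Theta\resto^B$. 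I would then apply Duhamel and take Strichartz norms, so that the homogeneous contribution is controlled by $\|\Theta(0)\|_{\mathbf H^{1/2}}\lesssim\epsilon$ via Lemma \ref{lem:systcoo}, Proposition \ref{prop:darboux} and Proposition \ref{th:main}, and the problem reduces to estimating each $F_j$ in an appropriate (possibly weighted) dual Strichartz space.

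For $F_2$, Hölder in time together with the Sobolev embedding yields a bound $C\|\Theta\|_{L^p_t W^{1/q-1/p,q}_x}^3\le CC_0^3\epsilon^3$, which is absorbed into $c\epsilon$ provided $\epsilon_0$ is small. Terms in $F_3$ and $F_4$ are at least quadratic in $(z,\Theta)$ with $\Sigma^{r_0}$-coefficients (by Lemma \ref{lem:EnExp}(2)--(3) and Proposition \ref{th:main}), so the smoothing-Strichartz estimates again produce $O(C_0^2\epsilon^2)$ contributions. For the crucial term $F_1$, I would use the observation following \eqref{eq:setMhat}: every $(\mu,\nu)\in M$ decomposes as $(\alpha,\beta)+\sum_J k_J (e_J,e_J)$ with $(\alpha,\beta)\in M_{min}$ and $k_J\in\N_0$, so that
\begin{equation*}
z^{\mu}\overline{z}^{\nu}G_{\mu\nu}(|z_{\tilde J}|^2) = \Bigl(\prod_J |z_J|^{2k_J}\Bigr)\,z^{\alpha}\overline{z}^{\beta}\,G_{\mu\nu}(|z_{\tilde J}|^2).
\end{equation*}
Since $|z_J|\le\epsilon$ by \eqref{L^inftydiscrete}, non-minimal contributions carry at least an extra factor $\epsilon^2$. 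Using the rapid decay of $G_{\mu\nu}$ and the $L^2_t$-to-Strichartz smoothing estimate for $e^{-\im t\sigma_3 B}P_c$ applied with Schwartz weights, the $F_1$ contribution is bounded by $c\sum_{(\alpha,\beta)\in M_{min}}\|z^{\alpha}\overline{z}^{\beta}\|_{L^2_t(0,T)}$, which is precisely the second term in \eqref{4.5}.

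The main obstacle I expect is the choice of functional setting for feeding the source $F_1$ into the propagator: the monomial $z^{\alpha}\overline{z}^{\beta}G_{\mu\nu}$ is naturally only in $L^2_t\Sigma^{r}_x$ and not in any standard dual Strichartz pair $L^{p'}_t L^{q'}_x$, so I must rely on the $L^2_tL^{2,s}_x \to L^p_t W^{1/q-1/p,q}_x$ smoothing-Strichartz chain for $e^{-\im t\sigma_3 B}P_c$ and then combine it with the usual Strichartz bounds for $F_2,F_3,F_4$ through a Christ--Kiselev-type argument on $[0,T]$. Once this functional setup is fixed, all remaining estimates are routine consequences of Lemma \ref{lem:EnExp}, Proposition \ref{th:main} and smallness of $\epsilon_0$, every term of order $\ge 2$ in $(z,\Theta)$ being automatically absorbed into the $c\epsilon$ on the right-hand side of \eqref{4.5}.
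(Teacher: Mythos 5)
Your plan follows essentially the same route as the paper: write $\Theta$ via Duhamel for $e^{-\im t\sigma_3 B}P_c$, treat the minimally resonant source $\sum_{(\mu,\nu)\in M_{min}} z^\mu\bar z^\nu G_{\mu\nu}(0)$ with the Christ--Kiselev-type smoothing estimate \eqref{eq:christkiselev} to produce the $\sum\|z^\mu\bar z^\nu\|_{L^2_t}$ term on the right of \eqref{4.5}, and absorb everything else into $c\epsilon$ using the bootstrap bounds; the paper packages your $F_2$, $F_3$, $F_4$ together with the non-minimal and $|z_{\tilde J}|^2$-dependent pieces of $F_1$ into the single remainder $\mathbb{A}=R_1+R_2$ and delegates its bound to Lemma 7.5 of \cite{bambusicuccagna} (Lemma \ref{lemma:bound remainder} here), then estimates $R_1$ by the dual $L^1_tH^{1/2}$ Strichartz bound and $R_2$ by \eqref{eq:christkiselev}, which is what you are effectively doing term by term.

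One precise point to repair: your factorization claim ``every $(\mu,\nu)\in M$ decomposes as $(\alpha,\beta)+\sum_J k_J(e_J,e_J)$ with $(\alpha,\beta)\in M_{min}$'' is not justified and is stronger than what is true. Repeatedly subtracting a unit from $\mu_J$ and $\nu_J$ simultaneously does land you on a pair with $\mu_J\nu_J\equiv 0$ that is still in $M$ (this is the observation under \eqref{eq:setMhat}), but that pair need not be minimal, and a minimal $(\alpha,\beta)\le(\mu,\nu)$ need not differ from $(\mu,\nu)$ by a diagonal vector. What you actually use, and what does hold, is only that every $(\mu,\nu)\in M$ dominates some $(\alpha,\beta)\in M_{min}$ componentwise; since both satisfy $|\nu|=|\mu|+1$ and $|\beta|=|\alpha|+1$, one has $|\mu|-|\alpha|=|\nu|-|\beta|=:d\ge 0$, and $d\ge 1$ for $(\mu,\nu)\notin M_{min}$, giving $|z^\mu\bar z^\nu|\le|z|^{2d}|z^\alpha\bar z^\beta|\le\epsilon^2|z^\alpha\bar z^\beta|$. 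Replace the identity by this inequality and the rest of your estimate for $F_1$ goes through unchanged.
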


\subsection{Proof of {Proposition} \ref{Lemma:conditional4.2}} \label{subsec:disp}

The facts collected in the
following lemma are proved   in \cite{bambusicuccagna}.

\begin{lemma}\label{lemma-strichartz} Assume (H1)--(H2).
\begin{itemize}

\item[\textup{(1)}]
There is a fixed constant $c_0$
  such that for any two admissible
pairs $( p,q)$ and $(a,b)$ we have
\begin{equation} \label{strichartz2}\begin{aligned} & \|   e^{\pm \im tB}P_c u_0 \|
_{L^p (\R , W^{\frac{1}{q}-\frac{1}{p} ,q} (\R ^3))}\le
 c_0\|  u_0   \| _{ H^{1/2}(\R ^3)}
\\
&     \|  \int _{s<t}  e^{\pm  \im (s-t)B}P_c F(s)ds \|
_{L^p(\R ,W^{\frac{1}{q}-\frac{1}{p} ,q}(\R ^3))}\le
 c _0\| F \| _{L^{a'}(\R ,W^{\frac{1}{a}-
 \frac{1}{b}+1, b'} (\R ^3))}
.
\end{aligned}
\end{equation}

\item[\textup{(2)}] For $u_0 \in H^{2,
s}(\R ^3, \C )$ for $s>1/2$ and    $\lambda >m$ then   $\displaystyle R^{\pm}_B(\lambda
)u_0  := \lim _{\varepsilon \to 0^{\pm}}  R_B (\lambda +\im \varepsilon )u_0$ is well defined in  $L^{2, -s}(\R ^3, \C )$.

\item[\textup{(3)}] For any $s>1$
there is a fixed $c_0=c_0(s,a)$ such that for any admissible pair
$(p,q)$ we have
\begin{equation}\label{eq:christkiselev}
\left\|  \int _{0} ^t e^{\pm \im (t'-t)B}P_cF(t') dt'\right \|
_{L^p(\R ,W^{\frac{1}{q}-\frac{1}{p} ,q} (\R ^3))} \le c_0    \| B^{\frac
12}P_cF\|_{L ^a(\R ,L ^{ 2, s } (\R ^3))}
\end{equation}

where for $p>2$ we
can pick any $a\in [1,2]$ while for $p=2$ we pick $a\in [1,2)$.

\end{itemize}

\end{lemma}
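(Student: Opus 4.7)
The plan is to derive a closed evolution equation for $\Theta$ from the Hamiltonian system \eqref{eq:hamsys2} together with the effective expansion \eqref{eq:enexp411}, apply Duhamel's formula, and control each source term by one of the estimates in Lemma \ref{lemma-strichartz}. From $\im\dot\Theta = \tfrac{1}{2}\sigma_3\nabla_\Theta \mathcal{H}$ and the fact that only $\langle B\Theta|\Theta\rangle$, $\mathcal Z_1$ and $\resto$ depend on $\Theta$, I obtain
$$\im\dot\Theta - \sigma_3 B\Theta \;=\; \tfrac{1}{2}\sigma_3\!\!\sum_{(\mu,\nu)\in M}\!\! z^{\mu}\bar z^{\nu}\,G_{\mu\nu}(|z_{\tilde J}|^2) \;+\; \tfrac{1}{2}\sigma_3\,\nabla_\Theta \resto.$$
Since $\sigma_3 B = \mathrm{diag}(B,-B)$, each component of $\Theta$ satisfies a Klein--Gordon-type equation, and Duhamel gives $\Theta(t) = e^{-\im\sigma_3 tB}\Theta(0) - \im\sigma_3\int_0^t e^{-\im\sigma_3(t-s)B} F(s)\,ds$, where $F$ is the right-hand side above. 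Lemma \ref{lem:systcoo} together with the bounds \eqref{eq:propdar} and \eqref{eq:propbir} guarantees $\|\Theta(0)\|_{\mathbf H^{1/2}}\lesssim \epsilon$, and then \eqref{strichartz2} provides $\|e^{-\im\sigma_3 tB}\Theta(0)\|_{L^p_t W^{1/q-1/p,q}_x}\le c\epsilon$ for every admissible pair.

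For the resonant source terms $z^{\mu}\bar z^{\nu}G_{\mu\nu}$ I use the smoothing-type bound \eqref{eq:christkiselev} with $a=2$ and a weight $s>1$: since $G_{\mu\nu}\in\Sigma^{r_0}$, one has $\|B^{1/2}G_{\mu\nu}\|_{L^{2,s}_x}\le c_{\mu\nu}$, so separating the purely temporal factor yields
$$\Bigl\|\int_0^t e^{-\im\sigma_3(t-s)B}\,z^{\mu}\bar z^{\nu} G_{\mu\nu}\,ds\Bigr\|_{L^p_t W^{1/q-1/p,q}_x} \;\le\; c_{\mu\nu}\,\|z^{\mu}\bar z^{\nu}\|_{L^2_t([0,T])}.$$

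The key algebraic step is to rewrite the sum over $M$ as a sum over $M_{\min}$. For $(\mu,\nu)\in M\setminus M_{\min}$ choose $(\mu',\nu')\in M_{\min}$ with $\mu'\le\mu$ and $\nu'\le\nu$ componentwise, and set $(\alpha,\beta)=(\mu-\mu',\nu-\nu')$. Because every element of $M$ satisfies $|\nu|=|\mu|+1$, the leftover multi-index satisfies $|\alpha|=|\beta|\ge 1$, and hence $|z^{\mu}\bar z^{\nu}|\le |z|^{2|\alpha|}\,|z^{\mu'}\bar z^{\nu'}|\le C\epsilon^{2}|z^{\mu'}\bar z^{\nu'}|$. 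Here $\|z\|_{L^\infty_t([0,T])}\lesssim \epsilon$ follows from \eqref{4.4a} with $(p,q)=(\infty,2)$ — which controls $\|\Theta\|_{L^\infty_t H^{1/2}}$ — combined with the bounds on the initial data and the coordinate equivalence \eqref{eq:coo11} and \eqref{eq:propdar}, \eqref{eq:propbir}. Summing over $M$ therefore collapses the right-hand side to $c\sum_{(\mu,\nu)\in M_{\min}}\|z^{\mu}\bar z^{\nu}\|_{L^2_t}$ plus a correction of size $C\epsilon^2\cdot C_0\epsilon$ that, for $\epsilon_0$ small, is absorbable into the $c\epsilon$ term in \eqref{4.5}.

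The remaining contribution $\nabla_\Theta\resto$ splits into three families: the high-order resonant monomials $|\mathbf m|\ge 2N+4$ (treated exactly as above, with extra powers of $\epsilon$ to spare), the genuinely nonlinear-in-$\Theta$ pieces $G_{2ij}$ and $G_{3ij}$ of degree $i+j\ge 2$ in $B^{-1/2}\eta$, and the flat remainder $\resto^{B}$ with $|\resto^{B}|\le c|z|\|\Theta\|_{\mathbf\Sigma^{-r_B}}^{2}$. For the $G_{kij}$ pieces I insert Hölder in $x$ using the Schwartz decay of $G_{kij}$ and then invoke the inhomogeneous Strichartz estimate in \eqref{strichartz2} with appropriately chosen dual admissible pair $(a,b)$: this yields a bound of order $\|\Theta\|_{L^p_tW^{1/q-1/p,q}_x}^{\,i+j}\le (C_0\epsilon)^{i+j}$ via the hypothesis \eqref{4.4a}; since $i+j\ge 2$, one factor $C_0\epsilon$ can be absorbed into $c\epsilon$ once $\epsilon_0$ is small. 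The genuine obstacle of the proof lies in this last bookkeeping: one must ensure that the final constant $c$ in \eqref{4.5} is independent of $C_0$, which forces a systematic use of the smoothing inequality \eqref{eq:christkiselev} for the resonant piece (so that the linear-in-$z^{\mu}\bar z^{\nu}$ expression appears, not a quadratic one) and a careful tracking of the power of $\epsilon$ gained from each higher-order factor in the remainder.
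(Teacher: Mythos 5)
There is a genuine gap: your argument does not prove the statement at hand at all. Lemma \ref{lemma-strichartz} is a purely \emph{linear} result about the operator $B=\sqrt{-\Delta+V+m^2}$: the Strichartz estimates \eqref{strichartz2} for the flow $e^{\pm\im tB}P_c$ and its Duhamel term, the limiting absorption principle for $R^{\pm}_B(\lambda)$ with $\lambda>m$ in part (2), and the weighted-$L^2$ smoothing estimate \eqref{eq:christkiselev} for the retarded integral in part (3). What you wrote is instead a proof sketch of Proposition \ref{Lemma:conditional4.2} (the conditional bound \eqref{4.5} on $\Theta$): you start from the Hamiltonian system \eqref{eq:hamsys2}, apply Duhamel, and control the source terms — and in doing so you \emph{invoke} \eqref{strichartz2} and \eqref{eq:christkiselev} as known tools. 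Relative to the statement you were asked to prove, this is circular: the inequalities you are supposed to establish appear as hypotheses in your argument, and nothing in your proposal addresses the linear dispersive theory (nor part (2), the resolvent limit, which you never mention).

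For comparison, the paper itself does not reprove these facts: it cites \cite{bambusicuccagna}, where they are established by transferring free Klein--Gordon Strichartz estimates to the perturbed flow (via the $W^{k,p}$-boundedness of wave operators of Yajima \cite{Y1} and/or the estimates of \cite{danconafanelli}), by the standard Agmon-type limiting absorption principle for $\lambda>m$ under the spectral hypotheses (H2), and, for \eqref{eq:christkiselev}, by combining Kato-smoothing (dual weighted-$L^2$) bounds with a $TT^*$ argument and the Christ--Kiselev lemma to pass from the untruncated to the truncated integral $\int_0^t$, which is where the restriction $a\in[1,2)$ for $p=2$ comes from. A correct submission would either supply arguments of this kind or explicitly defer to the reference; the material you wrote belongs to the proof of Proposition \ref{Lemma:conditional4.2} and Lemma \ref{lemma:bound remainder}, not here.
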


\qed

 Now we look at the equation for $\Theta$.   Then  for ${G}_{\mu\nu}  = {{G}}_{\mu\nu} (0  )$
 we have
\begin{equation}  \label{eq:eq f}  \begin{aligned} &
    \im \dot \Theta= \frac 1 2 \sigma_3  \nabla_{\Theta}  \mathcal{H}=  \sigma_3  B \Theta +\frac 1 2 \sigma_3
      \sum _{  (\mu , \nu)\in M _{min}  } {z}^{\mu}\bar z^\nu  {{G}}_{\mu\nu}  + \mathbb{A}  \text{  where}\\&
    \mathbb{A}:=
   \frac 1 2 \sigma_3\(\sum _{  (\mu , \nu)\in M _{min}  } {z}^{\mu}\overline z^\nu
  [ {{G}}_{\mu\nu} (|z_{\tilde J}|^2  )  -  {{G}}_{\mu\nu}  ] +\sum _{  (\mu , \nu)\in M\backslash M _{min}  } {z}^{\mu}\overline z^\nu  {{G}}_{\mu\nu} + \nabla _{ \Theta} \resto\)   .
\end{aligned}\end{equation}
 The following lemma is proved in Lemma 7.5 \cite {bambusicuccagna}.
\begin{lemma}\label{lemma:bound remainder}
Assume \eqref{4.4a}--\eqref{4.4}, and fix  a large $s>0$.  Then there is a   constant $C=C(C_0)$ independent of $\epsilon$ such that the following is true: we have $ \mathbb{A} =R_1+ R_2$ with
\begin{equation} \label{bound1:z1} \| R_1 \| _{L^1 ([0,T],H^{\frac{1}{2} }(\R ^3))}+\|B^{\frac 12}P_c R_2 \| _{L^{2
\frac{N+1}{N+2}} ([0,T],L^{2,s}(\R ^3))}\le C(C_0) \epsilon^2.
\end{equation}
\end{lemma}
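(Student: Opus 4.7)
I would split $\mathbb A = R_1 + R_2$ term-by-term according to which space--time integrability is available: pieces admitting an $L^1_t H^{1/2}_x$ bound of size $\le C(C_0)\epsilon^2$ go into $R_1$, while pieces requiring the Christ--Kiselev smoothing estimate \eqref{eq:christkiselev} with weighted $L^{2,s}_x$ and time exponent $a = 2(N+1)/(N+2)\in[1,2)$ go into $R_2$. The bootstrap hypotheses \eqref{4.4a}, \eqref{4.4} and \eqref{L^inftydiscrete} supply, respectively, Strichartz bounds of size $C_0\epsilon$ on $\Theta$, $L^2_t$ bounds of size $C_0\epsilon$ on each $M_{min}$-monomial $z^\mu\bar z^\nu$, and the $L^\infty_t$ smallness $|z|\le C_0\epsilon$.

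For the first sum in $\mathbb A$, since $G_{\mu\nu}$ is real analytic in $|z_{\tilde J}|^2$, the bracket $G_{\mu\nu}(|z_{\tilde J}|^2) - G_{\mu\nu}$ equals $|z_{\tilde J}|^2\widetilde G_{\mu\nu}(|z_{\tilde J}|^2)$ with $\widetilde G_{\mu\nu}$ Schwartz-class in $x$ and uniformly bounded for small $|z_{\tilde J}|$. This gains two powers of $\epsilon$: pairing one factor of $|z_{\tilde J}|$ (in $L^\infty_t$) with the $M_{min}$-monomial $z^\mu\bar z^\nu$ (in $L^2_t$) yields an $L^2_t$ coefficient of size $\lesssim \epsilon^2$; using the second $|z_{\tilde J}|$ factor to perform one more Hölder pairing with a bootstrap-controlled quantity places this into $R_1$. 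The sum over $M\setminus M_{min}$ is handled via minimality: each such $(\mu,\nu)$ dominates some $(\alpha,\beta)\in M_{min}$, giving the factorization $z^\mu\bar z^\nu = z^\alpha\bar z^\beta\cdot z^{\mu-\alpha}\bar z^{\nu-\beta}$ in which the second factor is pointwise $\le\epsilon$, after which Hölder in time yields either the $L^1_t$ bound (pushing into $R_1$) or the weighted $L^{2(N+1)/(N+2)}_t L^{2,s}_x$ bound (pushing into $R_2$).

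For $\nabla_\Theta \resto$ I would proceed along the expansion of $\resto$ from Proposition \ref{th:main}. The pure $\mathbf Z^{\mathbf m} a_{J\mathbf m}$ sum has no $\Theta$ dependence and contributes nothing. For the $\langle z_J\mathbf Z^{\mathbf m}G_{J\mathbf m}|\Xi\rangle$ pieces with $|\mathbf m|\ge 2N+4$, Lemma \ref{lem:comb1}(3) supplies a pointwise factorization $|z_J\mathbf Z^{\mathbf m}|\le |z_J|\cdot |z_K\mathbf Z^{\mathbf a}|\cdot|z_S\mathbf Z^{\mathbf b}|$ with $\mathbf a\in\mathcal M_K$ and $\mathbf b\in\mathcal M_S$; each of the two outer factors is pointwise dominated by an $M_{min}$-monomial times a bounded factor, hence $L^2_t$-controlled by \eqref{4.4}. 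Cauchy--Schwarz in time then yields an $L^1_t$ bound of order $C_0^2\epsilon^3$, contributing to $R_1$. The $G_{2ij}$ and $G_{3ij}$ terms ($i+j=2,3$) together with the higher-order remainder $\resto^B$ produce upon $\Theta$-differentiation multilinear expressions involving at least one factor of $\Theta$ (through $B^{-1/2}\eta$) against Schwartz-valued kernels analytic in $(z,\Xi)$. Here I would use Hölder to place one $\Theta$-factor in $L^{2(N+1)}_t W^{1/q-1/p,q}_x$ (for the admissible pair $(p,q)$ with $p=2(N+1)$, using \eqref{4.4a} combined with Sobolev embedding into $L^{2,s}_x$ for $s$ large), and the remaining $N$ discrete-monomial factors in $L^2_t$ via \eqref{4.4}, so that the resulting time exponent is exactly $2(N+1)/(N+2)$. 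This places such pieces into $R_2$ with the required bound.

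The principal obstacle will be the combinatorial bookkeeping of the multilinear expressions in $\nabla_\Theta\resto$: each arising monomial must be shown to admit either (i) enough discrete-variable smallness via Lemma \ref{lem:comb1} to fit $L^1_t H^{1/2}_x$, or (ii) the precise structure of one $\Theta$-factor against $N$ $M_{min}$-monomials to fit the endpoint $L^{2(N+1)/(N+2)}_t L^{2,s}_x$. The choice of exponent $2(N+1)/(N+2)$ is dictated exactly by this Hölder balance, which also explains why the truncation threshold $|\mathbf m|\ge 2N+4$ in the normal form of Proposition \ref{th:main} is $N$-dependent. Once this bookkeeping is completed, uniformity in $T$ follows directly from \eqref{4.4a}, \eqref{4.4} and \eqref{L^inftydiscrete}, producing $R_1, R_2$ of the claimed sizes.
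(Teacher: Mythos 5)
The paper does not prove this lemma at all: it simply states it and refers to Lemma~7.5 of \cite{bambusicuccagna} for the proof. So there is no proof inside this paper to compare against, and the most one can do is check your sketch against the available definitions and known exponents.

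Your overall decomposition strategy (put $L^1_t H^{1/2}_x$--controllable pieces in $R_1$ and push the rest into the weighted smoothing term $R_2$, with $2(N+1)/(N+2)$ dictated by a H\"older balance) is the right template. However, there is a concrete arithmetic error in the H\"older balance you propose for the $\Theta$-dependent pieces. You place one $\Theta$-factor in $L^{2(N+1)}_t$ and ``the remaining $N$ discrete-monomial factors in $L^2_t$'' and claim this produces exponent $\frac{2(N+1)}{N+2}$. But
\[
\frac{1}{2(N+1)}+\frac{N}{2}=\frac{1+N(N+1)}{2(N+1)}=\frac{N^2+N+1}{2(N+1)},
\]
which equals $\frac{N+2}{2(N+1)}$ only when $N=1$. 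The exponent $\frac{2(N+1)}{N+2}$ actually comes from pairing exactly \emph{one} factor in $L^{2(N+1)}_t$ with \emph{one} factor in $L^2_t$, since $\frac{1}{2(N+1)}+\frac12=\frac{N+2}{2(N+1)}$. The natural candidates for the $L^2_t$ factor are an $M_{min}$-monomial via \eqref{4.4} or a local-smoothing controlled quantity (cf.\ Lemma~\ref{lem:bound g}), not $N$ monomials.

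A second, more structural gap: you repeatedly upgrade from $L^2_t$ control of a monomial (plus $L^\infty_t$ smallness) to the required $L^1_t$ or $L^{2(N+1)/(N+2)}_t$ bound by invoking ``one more H\"older pairing with a bootstrap-controlled quantity,'' without saying what that quantity is. Note that $2(N+1)/(N+2)<2$, so on an arbitrary interval $[0,T]$ neither $L^2_t\hookrightarrow L^1_t$ nor $L^2_t\hookrightarrow L^{2(N+1)/(N+2)}_t$ holds with a $T$-independent constant. One genuinely needs to exhibit a product of two $L^2_t$-controlled factors (or a factor against the local-smoothing norm of $\Theta$), and for the first block of $\mathbb A$, namely $\sum_{(\mu,\nu)\in M_{min}} z^\mu\bar z^\nu\,[G_{\mu\nu}(|z_{\tilde J}|^2)-G_{\mu\nu}]$, this factorization is not supplied by your sketch: the extra $|z_{\tilde J}|^2$ factor is only $L^\infty_t$ small, and $z^\mu\bar z^\nu|z_{\tilde J}|^2$ is not itself of the form covered directly by \eqref{4.4}. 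This is exactly the delicate bookkeeping that Lemma~7.5 of \cite{bambusicuccagna} must be carrying out; leaving it at the level of ``bootstrap-controlled quantity'' is a genuine gap.
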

\qed

\textit{Proof of {Proposition} \ref{Lemma:conditional4.2}.}
 Using Lemma \ref{lemma:bound remainder}
 we write
\begin{equation*}\label{duhamel:f}
\Theta = e^{-\im \sigma _3 Bt}\Theta (0)
       -\frac 1 2 \im \sigma_3 \sum _{(\mu , \nu )\in {M}_{min}  }   \int _0^t  z^{\mu} \overline{z}^{\nu}e^{ -\im \sigma _3 B(s-t)} {G}_{\mu\nu} ds - \im \sum _{j=1}^{2}\int _0^t   e^{ \im \sigma _3 B(s-t)} P_c R_j ds.
\end{equation*}
By \eqref{strichartz2} for $(a,b)=(\infty ,2)$  and \eqref{bound1:z1}
\begin{equation*}  \label{eq:cond 1} \begin{aligned} &
 \|  \int _0^t  e^{ \im \sigma _3 B(s-t)}
    R_1 ds  \| _{L^p ([0,T],W^{ \frac {1} {q}- \frac{1} {p} ,q}(\R ^3))}
    \le C \| R_1 \| _{L^1_t([0,T],H^{\frac{1}{2} }(\R ^3))}
    \le C(C_0) \epsilon^2.
\end{aligned}
\end{equation*}
By \eqref{eq:christkiselev} and \eqref{bound1:z1}, we get
for $s>1$
\begin{equation*}  \label{eq:cond 2} \begin{aligned} &
 \|  \int _0^t  e^{ \im \sigma _3 B(s-t)}
   P_c R_2 ds  \| _{L^p_t([0,T],W^{ \frac {1} {q}- \frac{1} {p}  ,q}(\R ^3))}
      \le   {C} \|  \sqrt{B} P_cR_2 \| _{L^{2\frac{N+1}{N+2}}_t([0,T],
  L^{2, s }(\R ^3))}    \le C(C_0) \epsilon^2.
\end{aligned}
\end{equation*}
Finally, we have
\begin{align*}
\|\int _0^t  z^{\mu} \overline{z}^{\nu}e^{ -\im \sigma _3 B(s-t)} {G}_{\mu\nu}   \|
_{L^p([0,T],W^{\frac{1}{q}-\frac{1}{p} ,q}(\R ^3))}\leq &C_0 \| z^{\mu} \overline{z}^{\nu}  {G}_{\mu\nu} \| _{L^2([0,T],W^{\frac{1}{3}+\frac{1}{2}  , \frac{6}{5}}(\R ^3))}\\ \leq &c_0\| z^{\mu} \overline{z}^{\nu}    \| _{L^2 [0,T] }.
\end{align*}
\qed

Now, following a    standard argument, we  introduce a new variable $g$ setting
\begin{equation} \label{eq:def g} \begin{aligned} &
 g   =  \Theta +Y \text{   with } Y:=
  -\frac 1 2\sum _{(\alpha , \beta )\in M _{min}  }   {{z}}^\alpha \overline {{z}}^\beta   R _{ -\sigma _3 B}^{+}( \widetilde{{\omega }}     \cdot (\beta-\alpha  ))
  \sigma _3 {G}_{\alpha \beta  }     .
\end{aligned}\end{equation}
Substituting \eqref{eq:def g}  in \eqref{eq:eq f}   we obtain
\begin{equation} \label{eq:eq g1}  \begin{aligned} &
 \im \dot g =     \sigma_3  B g  +   \im \dot Y - \sigma_3  BY  +\frac 1 2 \sigma_3
      \sum _{  (\mu , \nu)\in M _{min}  } {z}^{\mu}\overline z^\nu  {{G}}_{\mu\nu}  + \mathbb{A},
\end{aligned}\end{equation}
where $\widetilde{\omega}=(\widetilde{ \omega} _{1},...,\widetilde{ \omega} _{2n}) $
has been defined in \eqref{eq:veceig}.
We then compute
\begin{equation*} \label{eq:eq g2}  \begin{aligned} &
   \im \dot Y = \sigma_3B Y  -\frac 1 2 \sigma_3
      \sum _{  (\mu , \nu)\in M _{min}  } {z}^{\mu}\overline z^\nu  {{G}}_{\mu\nu}   + \mathbf{T}, \text{ where} \\&     \textbf{T} :=\sum _{ J=1}^{2n} \left [\partial _{z_J}Y (\im \dot z_J- \widetilde{\omega}_Jz_J)+\partial _{\overline{z}_J}Y (\im \dot {\overline{z}}_J+ \widetilde{\omega}_J\overline{z}_J)
 \right ] .
\end{aligned}\end{equation*}
Then \begin{equation} \label{eq:eq g3}  \begin{aligned} &
 \im \dot g =     \sigma_3  B g  + \mathbf{T}  + \mathbb{A}
\end{aligned}\end{equation}
where $ \mathbf{T}$  is smaller than the quantities which have been canceled out.
The following result  is proved in \cite{bambusicuccagna}.
\begin{lemma}\label{lem:bound g} Assume \eqref{4.4a}--\eqref{4.4}. Fix $s>9/2$. Then, there are constants $\epsilon _0>0$
and $C>0$ such that, for $\epsilon \in (0,\epsilon _0) $ and   for
$c_0$ the constant in Lemma \ref{lemma-strichartz}, we have
\begin{equation} \label{bound:auxiliary}\| g
\| _{L^2_t([0,T],H^{-4,-s} (\R ^3))}\le c_0 \epsilon + C\epsilon ^2
.\end{equation}
\end{lemma}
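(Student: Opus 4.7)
I would start from the Duhamel representation of \eqref{eq:eq g3},
\[
g(t)=e^{-\im\sigma_3 B t}g(0)-\im\int_0^t e^{-\im\sigma_3 B(t-s)}\bigl(\mathbf{T}(s)+\mathbb{A}(s)\bigr)\,ds,
\]
and bound the three pieces separately in $L^2_t([0,T],H^{-4,-s}(\R^3))$ by means of the Kato smoothing estimate $\|e^{-\im\sigma_3 Bt}P_c u\|_{L^2_t L^{2,-s}}\le c_0\|u\|_{L^2}$, which is a standard consequence of item (2) of Lemma \ref{lemma-strichartz} via a limiting absorption argument (this is the framework used in \cite{bambusicuccagna,danconafanelli}). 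Note that $g(t)$ stays in $P_c\mathbf L^2$: $\Theta\in P_c\mathbf L^2$ by construction, and each $G_{\alpha\beta}$ entering the definition \eqref{eq:def g} of $Y$ is a continuous-spectrum vector, being a coefficient appearing in $\mathcal Z_1$ of \eqref{eq:enexp411}.

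For the homogeneous piece, the smoothing bound gives $\|e^{-\im\sigma_3 Bt}g(0)\|_{L^2_t H^{-4,-s}}\le c_0\|g(0)\|_{L^2}$. Since the coordinate changes of Propositions \ref{prop:darboux} and \ref{th:main} are close to the identity on the continuous component, $\|\Theta(0)\|_{L^2}\le\epsilon+O(\epsilon^2)$, while $Y(0)$ is a polynomial of degree $\ge 2$ in $z(0)$, hence $\|Y(0)\|_{L^2}=O(\epsilon^2)$; this produces the leading $c_0\epsilon$ contribution.

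For the remainder $\mathbb A$ I would decompose $\mathbb A=R_1+R_2$ as in Lemma \ref{lemma:bound remainder} and proceed exactly as at the end of the proof of Proposition \ref{Lemma:conditional4.2}: the $L^1_t H^{1/2}$ control of $R_1$ combined with a dual endpoint Strichartz inequality, together with the $L^{2(N+1)/(N+2)}_t L^{2,s}$ control of $B^{1/2}P_c R_2$ combined with the Christ--Kiselev type bound \eqref{eq:christkiselev}, give an overall $O(\epsilon^2)$ contribution in the target norm.

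The main obstacle is the quasi-resonant term $\mathbf T$, in which the factor $\im\dot z_J-\widetilde\omega_J z_J$ is multiplied by $\partial_{z_J}Y$ (and its conjugate). By \eqref{eq:def g} and item (2) of Lemma \ref{lemma-strichartz}, each $\partial_{z_J}Y$ is a polynomial in $z$ whose coefficients lie in $L^{2,-s'}\cap\Sigma^{r}$ for all $s'>1/2$ and $r>0$, so $\|\partial_{z_J}Y\|_{L^{2,-s'}}\lesssim|z|$. On the other hand, the discrete modulation equation \eqref{eq:FGR01}, extracted from the first line of \eqref{eq:hamsys2}, expresses $\im\dot z_J-\widetilde\omega_J z_J$ as a sum of monomials $z^\mu\overline z^\nu$ with $(\mu,\nu)\in M_{min}$, plus a bilinear interaction with $\Theta$, plus a nonresonant remainder of size $O(|z|^{2N+4})$. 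By the hypothesis \eqref{4.4} the $M_{min}$ part is $O(\epsilon)$ in $L^2_t$, while the remaining pieces are $O(\epsilon^2)$ in suitable mixed norms thanks to \eqref{4.4a}; inserting this into $\mathbf T$ and applying the smoothing estimate once more yields a contribution bounded by $C(C_0)\epsilon^2$, which together with the previous two estimates gives \eqref{bound:auxiliary}.
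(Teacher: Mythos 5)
The paper does not actually give a proof of this lemma; it states only ``The following result is proved in \cite{bambusicuccagna},'' so you are supplying an argument the authors omit. Your high-level Duhamel decomposition is the right framework, and your treatment of the homogeneous term and of the remainder $\mathbb{A}$ (via Lemma~\ref{lemma:bound remainder} and the Strichartz/Christ--Kiselev estimates of Lemma~\ref{lemma-strichartz}) is sound, including the observation that $Y(0)$ is higher order in $\epsilon$.

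The gap is in the treatment of $\mathbf{T}$, and it is a substantive one. You assert that each $\partial_{z_J}Y$ ``is a polynomial in $z$ whose coefficients lie in $L^{2,-s'}\cap\Sigma^{r}$ for all $s'>1/2$ and $r>0$.'' The $\Sigma^r$ half of this is false: the coefficients of $Y$ are the vectors $R^+_{-\sigma_3 B}\bigl(\widetilde\omega\cdot(\beta-\alpha)\bigr)\sigma_3 G_{\alpha\beta}$, and since $|\widetilde\omega\cdot(\beta-\alpha)|>m$ lies in the continuous spectrum of $\sigma_3 B$, the boundary value of the resolvent maps $\Sigma^r\to L^{2,-s}$ but destroys weighted integrability (the image decays only like $|x|^{-1}$, from the $\delta(\mp\sigma_3 B-L)$ part of $R^\pm$). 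So the coefficients of $\partial_{z_J}Y$ lie in $L^{2,-s}$ but \emph{not} in $L^{2,s}$, let alone $\Sigma^r$. This matters because every estimate you invoke from Lemma~\ref{lemma-strichartz} — the dual Strichartz bound \eqref{strichartz2} and the Christ--Kiselev smoothing bound \eqref{eq:christkiselev} — requires the Duhamel source in a space with \emph{positive} weight ($L^1_t H^{1/2}$ or $L^a_t L^{2,s}$ with $s>1$). Feeding in a source whose spatial part is only in $L^{2,-s}$ is not covered by these tools, so ``applying the smoothing estimate once more'' does not close the argument as written. The missing ingredient is a resolvent-compatible smoothing estimate of the type
\[
\Bigl\|\int_0^t e^{-\im\sigma_3 B(t-\tau)}\,c(\tau)\,R^{\pm}_{-\sigma_3 B}(L)\,h\,d\tau\Bigr\|_{L^2_t L^{2,-s}}\ \lesssim\ \|c\|_{L^2_t}\,\|h\|_{L^{2,s}}\qquad(|L|>m,\ h\ \text{Schwartz}),
\]
i.e.\ the statement that composing the free propagator with a boundary-value resolvent at a continuous-spectrum energy still obeys Kato smoothing. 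Such estimates are exactly what \cite{bambusicuccagna} develops and uses to prove this lemma, but they are not among the three items of Lemma~\ref{lemma-strichartz} in the present paper, so your proof as written has no tool capable of handling the $\mathbf T$-integral. A secondary, minor, inaccuracy: you argue $Y\in P_c\mathbf L^2$ because ``$G_{\alpha\beta}$ is a continuous-spectrum vector''; in fact the $G_{\alpha\beta}$ need not lie in $P_c\mathbf{L}^2$ a priori — what saves you is that $R^\pm_{-\sigma_3 B}(L)$ with $|L|>m>\max_j\omega_j$ already projects onto the continuous spectrum.
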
\qed

\subsection{Estimate of the discrete variables $z $}
\label{discrete}

In the following, we use $
    \(   g |h\) =
 \int_{\R^3} \(g_1(x) \overline{h_1(x)}+g_2(x) \overline{h_2(x)}\)\,dx$.

Let us turn now to the analysis of the Fermi Golden Rule (FGR).
The equation of $z$ is
 \begin{align}
\im \dot z _J
&= (2\widetilde{\omega} _{J })^{-1}( 1 +   \varpi  _J(|z_J|^2))
  \Biggl\{\(  \partial _{\overline{z}_J}  E(\Phi_J[z_J]+
\partial _{\overline{z}_J} {\mathcal{Z}}_0(z)     \right) \Biggr.\label{eq:FGR01}\\&\quad\quad+  \Biggl.\frac 1 2
 \sum _{ (\mu , \nu )\in M_{min}  }     \left [  \frac{z  ^{\mu }
 \overline{ {z }}^ { {\nu}  } }{\overline{z}_J}
\nu _J\(
  {G}_{\mu \nu }  | \Theta\)      +     \frac{\bar z  ^{\mu }
 { {z }}^ { {\nu}  } }{\overline{z}_J}
  \mu _J
\(
 {\overline{{G  }}}_{\mu  \nu  } | \overline{\Theta }\) \right ]  +   \mathfrak{E}_J\Biggr\},\nonumber
\end{align}
  and
\begin{align*}
\\ \mathfrak{E} _J &:=
 \frac 1 2
 \sum _{ (\mu , \nu )\in M\backslash M_{min} }
\left [ \frac{z  ^{\mu }
 \overline{ {z }}^ { {\nu}  } }{\overline{z}_J} \nu _J\(
  {G}_{\mu \nu } (|z_{\tilde J}|^2)  |\Theta\)      +     \frac{\bar z  ^{\mu }
 { {z }}^ { {\nu}  } }{\overline{z}_J}  \mu _J
\(
 {\overline{{G  }}}_{\mu  \nu  }(|z_{\tilde J}|^2) | \overline{\Theta } \) \right ] +
\partial _{  \overline{z} _J}   \mathcal{R} \nonumber \\  &
+      \frac 1 2 \sum _{ (\mu , \nu )\in M_{min} }    \nu _J \frac{z  ^{\mu }
 \overline{ {z }}^ { {\nu}  } }{\overline{z}_J}
\(
 {G}_{\mu \nu }(|z_{\tilde J}|^2)- {G}_{\mu \nu } | \Theta \)      + \frac 1 2 \sum _{(\mu  , \nu  )\in M_{min}}    \mu _J   \frac{z  ^{\nu  }
 \overline{ {z }}^ { {\mu}   } }{\overline{z}_J}
\(
 \overline{{G}}_{\mu \nu }(|z_{\tilde J}|^2)-{\overline{{G  }}}_{\mu  \nu  }  | \overline{\Theta }\) \nonumber    \\&
 +   \sum_{(\mu , \nu )\in M}  \delta_{J\tilde J} z_J\<z^\mu\bar z^\nu G_{\mu\nu}'(|z_{\tilde J}|^2)|\Theta\>.
\end{align*}
Estimates  \eqref{4.4a}--\eqref{4.4} imply the following simple lemma, see \cite{CM1}
for the proof.
\begin{lemma}\label{lem:estfgr1}  For $\epsilon _0>0$ sufficiently small
there is a fixed $c_1$ s.t. for $\epsilon\in (0,\epsilon _0) $ we have
 \begin{equation}\label{eq:estfgr11}
  \begin{aligned}  & \| \mathfrak{E}_J z_J\| _{L^1 [0,T]}\le   c_1 \epsilon ^2   .
\end{aligned}\end{equation}
\end{lemma}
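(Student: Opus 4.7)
My plan is to estimate each of the several types of contributions to $\mathfrak{E}_J z_J$ separately, exploiting a common Hölder structure. The three ingredients available are: (i) the a priori $L^\infty_t$ bound $|z(t)| \lesssim \epsilon$ from \eqref{4.4a}; (ii) the $L^2_t$ bound \eqref{4.4} on $M_{min}$ monomials; and (iii) the Strichartz bound \eqref{4.4a}, which, paired with any Schwartz-class coefficient $G\in\Sigma^{r_0}$, yields $\|\langle G | \Theta\rangle\|_{L^2_t[0,T]} \le C\epsilon$ (take $(p,q)=(2,6)$ and a weighted Hölder in $x$ to absorb the weight in $G$). For each summand of $\mathfrak{E}_J z_J$ I will write the integrand as (an $M_{min}$ monomial in $L^2_t$) $\times$ (extra powers of $z$ in $L^\infty_t$) $\times$ (a pairing $\langle G|\Theta\rangle$ in $L^2_t$), so that Hölder in $t$ produces a bound of size $\epsilon^3$ (and hence $\le c_1\epsilon^2$).

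For the two sums over $(\mu,\nu)\in M\setminus M_{min}$, the definition of $M_{min}$ guarantees the factorization $z^\mu\bar z^\nu = z^{\mu'}\bar z^{\nu'}\cdot z^\alpha\bar z^\beta$ with $(\mu',\nu')\in M_{min}$ and $|\alpha|+|\beta|\ge 1$; since $|z_J/\bar z_J|=1$, after multiplying by $z_J$ we obtain an absolute-value bound by $|z^{\mu'}\bar z^{\nu'}|\cdot|z|^{|\alpha|+|\beta|}\cdot|\langle G_{\mu\nu}(|z_{\tilde J}|^2)|\Theta\rangle|$, and Hölder gives $\le C\epsilon^3$. For the two sums over $(\mu,\nu)\in M_{min}$ involving the differences $G_{\mu\nu}(|z_{\tilde J}|^2)-G_{\mu\nu}$, the real analyticity established in Proposition \ref{prop:bddst} (and propagated through the coordinate changes) gives $\|G_{\mu\nu}(|z_{\tilde J}|^2)-G_{\mu\nu}\|_{\Sigma^{r_0}}\le C|z_{\tilde J}|^2\lesssim\epsilon^2$, which plays the role of the extra $|z|^{|\alpha|+|\beta|}$ factor; the estimate then closes as before. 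The final $\delta_{J\tilde J}$ term, once multiplied by $z_J$, carries a prefactor $|z_J|^2\lesssim\epsilon^2$ that absorbs the remaining Hölder estimate.

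The piece $\partial_{\bar z_J}\mathcal R$ needs a case analysis matching the decomposition of $\mathcal R$ in Proposition \ref{th:main}. The high-order monomial tails (with $|\mathbf m|\ge 2N+4$) are handled by Lemma \ref{lem:comb1}(3): every such $z_{J_0}\mathbf Z^{\mathbf m}$ factors as a product of two $M$-type monomials $z_K\mathbf Z^{\mathbf a}$, $z_S\mathbf Z^{\mathbf b}$, each of which in turn dominates an $M_{min}$ monomial (and hence lies in $L^2_t$) times an $L^\infty_t$ tail of $z$; this yields two $L^2_t$ factors and an $L^\infty_t$ factor, hence $\epsilon^3$. The $\langle G_{2ij}|(B^{-1/2}\eta)^i(\overline{B^{-1/2}\eta})^j\rangle$ and $G_{3ij}$ terms, being of order $i+j\ge 2$ in $\Theta$, after differentiating in $\bar z_J$ and multiplying by $z_J$ inherit at least $\|\Theta\|_{L^2_tL^{2,-s}}^2\lesssim\epsilon^2$. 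The abstract remainder $\mathcal R^B$, controlled by $c|z|\|\Xi\|^2_{\mathbf\Sigma^{-r_0}}$, is bounded in the same manner.

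The main obstacle is purely bookkeeping: there are many term types and one must verify case by case that an $L^2_t\times L^2_t\times L^\infty_t$ Hölder decomposition succeeds, producing the extra power of $\epsilon$ beyond the naive size. The essential combinatorial input that makes this uniform is Lemma \ref{lem:comb1}(3), which guarantees every sufficiently high-order monomial in $\mathbf Z$ splits into two $M_{min}$ pieces; both are then controlled in $L^2_t$ by the bootstrap hypothesis \eqref{4.4}. This is precisely the mechanism exploited in the proof of Lemma \ref{lemma:bound remainder} for the $\Theta$-equation, and the present argument is its counterpart for the discrete equation.
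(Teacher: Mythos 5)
Your proof is correct and follows the approach one expects here (the paper itself does not give a self-contained proof but defers to \cite{CM1}): a term-by-term Hölder estimate in time, with one factor controlled in $L^2_t$ by the bootstrap bound \eqref{4.4} on $M_{\min}$-monomials, one factor in $L^2_t$ by the Strichartz bound \eqref{4.4a} (dual pairing of $\Theta\in W^{-1/3,6}$ against a Schwartz-class $G_{\mu\nu}$), an $L^\infty_t$ factor of extra smallness from $|z|\lesssim\epsilon$ (or from the analyticity of the coefficients), and Lemma~\ref{lem:comb1}(3) to break the high-order $\mathbf{Z}$-tails in $\partial_{\bar z_J}\resto$ into two $M$-type pieces. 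The $|z_J/\bar z_J|=1$ device, the $O(|z_{\tilde J}|^2)$ gain from $G_{\mu\nu}(|z_{\tilde J}|^2)-G_{\mu\nu}$, and the $|z_J|^2$ prefactor on the $\delta_{J\tilde J}$ term are all used correctly, and the output $C_0^2\epsilon^3\le c_1\epsilon^2$ closes after shrinking $\epsilon_0$ in a $C_0$-dependent way, which the lemma's phrasing permits.

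Two small imprecisions worth flagging, neither fatal. First, the $L^\infty_t$ bound $|z(t)|\lesssim\epsilon$ does not come from \eqref{4.4a} as you assert; it comes from energy conservation \eqref{eq:enexp411} (as the paper notes later in the proof of Proposition~\ref{prop:mainbounds}). Second, for the $G_{2ij},G_{3ij}$ and $\resto^B$ contributions you invoke $\|\Theta\|_{L^2_tL^{2,-s}}\lesssim\epsilon$, which is not one of the two bootstrap hypotheses. It can be obtained from Lemma~\ref{lem:bound g} combined with the explicit form of $Y$ in \eqref{eq:def g} (since $g=\Theta+Y$ and $\|Y\|_{L^2_tL^{2,-s}}\lesssim\sum_{M_{\min}}\|z^\alpha\bar z^\beta\|_{L^2_t}$); alternatively you can avoid it entirely by arguing directly from \eqref{4.4a} with $(p,q)=(2,6)$: after the extra $B^{-1/2}$, $B^{-1/2}\eta\in W^{1/6,6}\hookrightarrow L^9$, and Hölder against Schwartz $G$ then gives the quadratic term in $L^1_t$ of size $\lesssim C_0^2\epsilon^2$ without any weighted estimate. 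Stating one of these two justifications would make the proposal watertight.
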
 \qed

Now, we substitute $ \Theta = g-Y$ from \eqref{eq:def g} obtaining
\begin{equation}\label{eq:FGR02}
\begin{aligned}
\im \dot z _J
&=\frac{ 1 +   \varpi  _J(|z_J|^2)}{2\widetilde{\omega} _{J }}
\Biggl\{  \partial _{\overline{z}_J}  E(\Phi_J[z_J]+
\partial _{\overline{z}_J} {\mathcal{Z}}_0  (z)  \Biggr.  \\&
+\frac1 4\sum _{ \substack{  (\mu , \nu )\in M _{min}\\
(\alpha , \beta )\in M_{min}}}  \left [ \nu _J  \frac{z  ^{\mu +\beta}
 \overline{ {z }}^ { {\nu} +\alpha } }{\overline{z}_J}
\(G_{\mu\nu}|  R_  {-\sigma _3B}^{+}( \widetilde\omega     \cdot (\beta -
\alpha )) \sigma _3 {G}_{\alpha \beta}   \)  \right .
\\&\Biggl.
+  \left .       \mu _J   \frac{z  ^{\nu  + \alpha  }
 \overline{ {z }}^ { {\mu} +\beta    } }{\overline{z}_J}
\(\overline{ G}_{\mu \nu}|  R_  {-\sigma _3B}^{-}(\widetilde\omega    \cdot (\beta  -
\alpha  )) \sigma _3 \overline{G}_{\alpha   \beta  }    \)  \right ]    +
    \mathcal{F} _J \Biggr\} \text{, where}
\end{aligned}
\end{equation}
 \begin{align} &
 \mathcal{F} _J :=\frac{ 1 +   \varpi  _J(|z_J|^2)}{4\widetilde{\omega} _{J }}
 \left \{\sum _{ (\mu , \nu )\in M_{min}  }
\left [ \frac{z  ^{\mu }
 \overline{ {z }}^ { {\nu}  } }{\overline{z}_J} \nu _J\(
  {G}_{\mu \nu } | g \)      +     \frac{\bar z  ^{\mu }
 { {z }}^ { {\nu}  } }{\overline{z}_J}   \mu _J
\(
 {\overline{{G  }}}_{\mu  \nu  } | \overline{g } \) \right ]  +   \mathfrak{E}_J \right \}  .\nonumber
\end{align}
From \eqref{bound:auxiliary}  and \eqref{eq:estfgr11}  we get the following simple bound.
\begin{lemma}\label{lem:estfgr2}  For $\epsilon _0>0$ sufficiently small
there is a fixed $c_1$ s.t. for $\epsilon\in (0,\epsilon _0) $ we have
 \begin{equation}\label{eq:estfgr111}
  \begin{aligned}  &  \|\mathcal{F} _J z_J \| _{L^1 [0,T]}\le (1+C_0) c_1 \epsilon ^2   .
\end{aligned}\end{equation}
\end{lemma}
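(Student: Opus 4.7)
The plan is to bound $\mathcal{F}_J z_J$ in $L^1_t$ term by term, separating the contribution of $\mathfrak{E}_J$ from the two explicit sums over $M_{min}$. Since $|\varpi_J(|z_J|^2)|\le c|z_J|^2\lesssim \epsilon^2$ by Proposition \ref{prop:bddst}(2) and $\widetilde\omega_J$ is a fixed positive constant, the scalar prefactor $(1+\varpi_J(|z_J|^2))/(4\widetilde\omega_J)$ is uniformly bounded on the set of interest, so it can be pulled out of every estimate.

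For the $\mathfrak{E}_J$ piece, Lemma \ref{lem:estfgr1} immediately gives the contribution $\lesssim c_1\epsilon^2$. For a typical summand in the first sum over $M_{min}$, note that the factor $\nu_J$ kills the term unless $\nu_J\ge 1$; in that case $\overline z_J$ divides $\overline z^\nu$ and the pointwise identity
\begin{equation*}
\left| z_J\,\frac{z^\mu\overline z^\nu}{\overline z_J}\right| \;=\; \frac{|z_J|}{|\overline z_J|}\,|z^\mu\overline z^\nu|\;=\;|z^\mu\overline z^\nu|
\end{equation*}
holds (interpreted as $|z_J\overline z_J^{\nu_J-1}\prod_{K\ne J}\overline z_K^{\nu_K}|=\prod_K|z_K|^{\nu_K}$ when $z_J=0$). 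Hence by Cauchy--Schwarz in time and the duality bound $|(G_{\mu\nu}|g)|\le \|G_{\mu\nu}\|_{H^{4,s}}\|g\|_{H^{-4,-s}}$ (valid because $G_{\mu\nu}\in\mathbf\Sigma^{r_0}$ with $r_0$ large),
\begin{equation*}
\left\| z_J\,\frac{z^\mu\overline z^\nu}{\overline z_J}\,(G_{\mu\nu}|g)\right\|_{L^1_t[0,T]}\le \|z^\mu\overline z^\nu\|_{L^2_t[0,T]}\cdot \|G_{\mu\nu}\|_{H^{4,s}}\cdot \|g\|_{L^2_t([0,T], H^{-4,-s})}.
\end{equation*}

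The three factors are controlled by $C_0\epsilon$ (via \eqref{4.4} and the fact that $(\mu,\nu)\in M_{min}$), by a fixed constant, and by $c_0\epsilon+C\epsilon^2$ (via Lemma \ref{lem:bound g}), respectively, giving a bound $\le C(c_0+C\epsilon)C_0\epsilon^2\lesssim C_0\epsilon^2$ for $\epsilon<\epsilon_0$ small. The second sum, involving $\overline G_{\mu\nu}$ and $\overline g$, is estimated identically after complex conjugation. Since $M_{min}$ is finite, summing these contributions and adding the $\mathfrak{E}_J$ term yields $\|\mathcal{F}_J z_J\|_{L^1[0,T]}\le C(1+C_0)\epsilon^2$, which upon absorbing $C$ into the constant $c_1$ is exactly \eqref{eq:estfgr111}.

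I do not foresee a substantial obstacle here: the only nonobvious step is the cancellation $|z_J/\overline z_J|=1$ (equivalently, the observation that $\nu_J\ge 1$ forces $\overline z_J\mid \overline z^\nu$), which restores the monomial $z^\mu\overline z^\nu$ with $(\mu,\nu)\in M_{min}$ so that the quadratic estimate \eqref{4.4} applies; everything else is Cauchy--Schwarz plus the two prior lemmas \ref{lem:estfgr1} and \ref{lem:bound g}.
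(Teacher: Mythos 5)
Your proof is correct and matches the paper's intended (but unstated) argument: the paper simply remarks that the bound follows ``from \eqref{bound:auxiliary} and \eqref{eq:estfgr11}'' and gives no further detail, which is exactly the combination of Lemma \ref{lem:bound g} and Lemma \ref{lem:estfgr1} that you spell out. The decomposition into the $\mathfrak{E}_J$ piece plus the two $M_{\min}$ sums, the observation that the factor $\nu_J$ (resp.\ $\mu_J$) forces $\nu_J\ge 1$ (resp.\ $\mu_J\ge 1$) so that $|z_J/\overline z_J|=1$ restores the genuine monomial $z^\mu\overline z^\nu$, and the Cauchy--Schwarz-in-time step using $\|z^\mu\overline z^\nu\|_{L^2}\le C_0\epsilon$ from \eqref{4.4} together with $\|g\|_{L^2H^{-4,-s}}\le c_0\epsilon+C\epsilon^2$ from \eqref{bound:auxiliary}, are precisely the steps the authors are alluding to.
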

\qed

We now introduce a  new variable   $\zeta$ defined by  $\zeta _J = z _J  + {T}_J (z)$  where
\begin{equation}\label{eq:FGR21}  \begin{aligned}   &
  {T}_J (z):= -\frac 1 8
 \sum _{ \substack{  (\mu , \nu )\in M_{min} \\
(\alpha , \beta )\in M_{min}}}       \frac{\nu _Jz  ^{\mu +\beta}
 \overline{ {z }}^ { {\nu} +\alpha } }{((\mu - \nu)\cdot \widetilde{\omega }-(\alpha - \beta )\cdot \widetilde{\omega }  )\overline{z}_J}
\(  {G}_{\mu \nu } | R_{-\sigma _3B}^{+}( \widetilde{\omega }     \cdot (\beta -
\alpha ))   \sigma _3{{G} }_{\alpha \beta}
   \)  \\&
   - \frac 1 8  \sum _{ \substack{  (\mu , \nu )\in M_{min} \\
(\alpha , \beta )\in M_{min}}}         \frac{\mu _J  z  ^{\nu + \alpha }
 \overline{ {z }}^ { {\mu}+\beta  } }{((\alpha  - \beta )\cdot \widetilde{\omega} -(\mu  - \nu  )\cdot  \widetilde{\omega }  )\overline{z}_J}
\(
  \overline{{G}} _{\mu \nu }  | R_{-\sigma _3B}^{-}(  \widetilde{\omega }    \cdot (\beta -
\alpha )) \sigma _3 \overline{G}_{\alpha  \beta } \),
  \end{aligned}
\end{equation}
with the summation   performed over the pairs where the formula makes sense, that is $(\mu -\nu )\cdot \widetilde{\omega}\neq (\alpha -\beta )\cdot \widetilde{\omega} .$
It is easy to see that
\begin{equation}  \label{equation:FGR3} \begin{aligned}   & \| \zeta  -
 z  \| _{L^2(0,T)} \le  c(N,C_0) \epsilon ^2  \text{  and }  \| \zeta  -
 z \| _{L^\infty (0,T)} \le  c(N,C_0) \epsilon ^2 .
\end{aligned}
\end{equation}
We set $\Lambda := \{ (\nu - \mu ) \cdot \widetilde{\omega}    :   (\mu  , \nu )\in M_{min} \}      $.
For any $L\in \Lambda $ set
\begin{equation}  \label{eq:FGR23}    \begin{aligned}   &   M_L  := \{  (\mu  , \nu )\in M _{min}: (\nu - \mu ) \cdot \widetilde{\omega}   =L  \}  .
\end{aligned}    \end{equation}
Then all terms of
\eqref{eq:FGR02}   where $(\mu -\nu )\cdot \widetilde{\omega}\neq (\alpha -\beta )\cdot \widetilde{\omega}  $
cancel out giving an equation like \eqref{eq:FGR251}  below,  which is the equation satisfied
by $\zeta$ and involves an error term
which by \cite{CM1}   satisfies \eqref{4.33}.

\begin{lemma}\label{lem:chcoo}    $\zeta$ satisfies  equations
 \begin{align} \nonumber
 &\im \dot \zeta _J
 =\frac{ 1 +   \varpi  _J(|\zeta _J|^2)}{2\widetilde{\omega} _{J }}
 \Biggl\{  \partial _{\overline{\zeta}_J}  E(\Phi_J[\zeta _J]+
\partial _{\overline{\zeta }_J} {\mathcal{Z}}_0  (\zeta )   +\\&
\frac{1}{4} \sum _{L\in \Lambda}  \sum _{ \substack{  (\mu , \nu )\in M _L\\
(\alpha , \beta )\in M _L }}  \( \nu _J  \frac{\zeta  ^{\mu +\beta}
 \overline{ \zeta}^ { {\nu} +\alpha } }{\overline{\zeta}_J}
\( {G}_{\mu \nu }  | R_  {-\sigma _3B}^{+}(L)  \sigma _3{G}_{\alpha \beta}
   \)
+       \mu _J   \frac{\zeta  ^{\nu  + \alpha  }
 \overline{ \zeta}^ { {\mu} +\beta    } }{\overline{\zeta}_J}
\( \overline{{G}}_{\mu  \nu  } | R_  {-\sigma _3B}^{-}(L) \sigma _3 \overline{G}_{\alpha   \beta  }
    \)  \) \nonumber
\\&  \quad \quad \quad \quad \quad \quad \quad \quad \quad  \Biggl.+
    \mathcal{G} _J  \Biggr\},   \label{eq:FGR251}
\end{align}
where
there are fixed $c_1$  and $ \epsilon _0>0$ such that
 for $T>0$ and $\epsilon \in (0, \epsilon _0)$ we have
 \begin{equation}\label{4.33}
  \begin{aligned}  & \sum _{J=1}^{2n} |\widetilde \omega| \| \mathcal{G}_J   \zeta _J\| _{L^1 [0,T]}\le (1+C_0) c_1 \epsilon ^2   .
\end{aligned}
\end{equation}

\end{lemma}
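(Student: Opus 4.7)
\textit{Proof Proposal.} The strategy is a scalar-level Birkhoff normal form on top of the system already produced in \eqref{eq:FGR02}: the near-identity change of variable $\zeta_J = z_J + T_J(z)$ defined in \eqref{eq:FGR21} is engineered so that in the equation for $\dot\zeta_J$ every non-resonant contribution of the double sum appearing in \eqref{eq:FGR02} is eliminated, leaving only the resonant quadruples for which $(\mu-\nu)\cdot\widetilde{\omega}=(\alpha-\beta)\cdot\widetilde{\omega}$. Organizing the surviving terms by their common value $L \in \Lambda$ groups the indices into pairs $(\mu,\nu),(\alpha,\beta)\in M_L$ and turns the resolvent argument $\widetilde{\omega}\cdot(\beta-\alpha)$ into $L$, producing exactly the double sum displayed in \eqref{eq:FGR251}.

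First I would differentiate $\zeta_J=z_J+T_J(z)$ in time by the chain rule, substituting $\dot z_K$ from \eqref{eq:FGR02} and splitting $\dot z_K = -\im\widetilde{\omega}_K z_K + r_K$, where $r_K$ collects everything beyond the linear harmonic part. The leading piece of $\im\dot T_J$ multiplies each monomial $z^{\mu+\beta}\overline{z}^{\nu+\alpha}/\overline{z}_J$ inside $T_J$ by its harmonic weight; because of the denominator $(\mu-\nu)\cdot\widetilde{\omega}-(\alpha-\beta)\cdot\widetilde{\omega}$ chosen in \eqref{eq:FGR21}, this produces precisely the negative of the corresponding non-resonant summand in the double sum of \eqref{eq:FGR02}. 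This is the cohomological cancellation standard in normal-form theory and performed analogously in \cite{CM1}. After the cancellation only the resonant double sum, reorganized as in \eqref{eq:FGR251}, survives at leading order; the leading diagonal terms $\partial_{\overline{z}_J}E(\Phi_J[z_J])$ and $\partial_{\overline{z}_J}\mathcal{Z}_0(z)$ are then re-expressed in terms of $\zeta$, using the smallness of the correction.

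All residual contributions are gathered into $\mathcal{G}_J$; they fall into three groups: (i) the term $\mathcal{F}_J$ transported from \eqref{eq:FGR02}; (ii) chain-rule errors $(\partial_{z_K}T_J)r_K$ and $(\partial_{\overline{z}_K}T_J)\overline{r_K}$ arising from the subleading part of $\dot z_K$; and (iii) the mismatch from evaluating the diagonal and surviving resonant terms at $z$ versus at $\zeta$. To prove \eqref{4.33} I would bound $\|\mathcal{G}_J\zeta_J\|_{L^1[0,T]}$ by combining Lemma \ref{lem:estfgr2} for (i); for (ii), a H\"older-in-time estimate pairing the polynomial factors $z^\mu\overline{z}^\nu$ with the $L^2$-bound \eqref{4.4} and the continuous-mode Strichartz bounds from Lemma \ref{lemma-strichartz} together with the estimate for $g$ of Lemma \ref{lem:bound g}; and for (iii), the $L^2\cap L^\infty$ smallness $\|\zeta-z\|\lesssim c(N,C_0)\epsilon^2$ from \eqref{equation:FGR3}. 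The main obstacle is purely combinatorial: tracking the coefficients of every monomial produced by $\im\dot T_J$ and matching them, one by one, with the corresponding non-resonant summands of \eqref{eq:FGR02}, while grouping the resonant leftovers according to $L \in \Lambda$. Once this bookkeeping is set up as in \cite{CM1}, the quantitative bound \eqref{4.33} follows routinely from the lemmas cited above.
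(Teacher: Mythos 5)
The paper gives essentially no proof of this lemma: after defining $T_J$ in \eqref{eq:FGR21} it simply asserts, in one sentence, that the non-resonant terms of \eqref{eq:FGR02} cancel and that the error satisfies \eqref{4.33} ``by \cite{CM1}''. Your proposal fills in precisely the standard cohomological/normal-form argument that the paper delegates to \cite{CM1}, and it identifies the correct structure: chain-rule differentiation of $T_J$, splitting $\dot z_K$ into the harmonic rotation $-\im\widetilde\omega_K z_K$ plus remainder, cancellation of the non-resonant double sum, reorganization of the surviving terms by $L\in\Lambda$, and absorption of all remainders into $\mathcal G_J$ with the $L^1$ bound coming from Lemma \ref{lem:estfgr2}, the bootstrap assumptions \eqref{4.4a}--\eqref{4.4}, Lemma \ref{lem:bound g}, and \eqref{equation:FGR3}. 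This is the same approach the paper intends.

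One technical point in your sketch deserves a correction, even if it is only bookkeeping. You state that $\im\dot T_J$ multiplies each monomial $z^{\mu+\beta}\overline z^{\nu+\alpha}/\overline z_J$ by ``its harmonic weight'' and that, thanks to the chosen denominator, this ``produces precisely the negative of the corresponding non-resonant summand''. In fact the harmonic weight of that monomial is
\begin{equation*}
(\mu+\beta-\nu-\alpha+e_J)\cdot\widetilde\omega \;=\; \big[(\mu-\nu)\cdot\widetilde\omega-(\alpha-\beta)\cdot\widetilde\omega\big]+\widetilde\omega_J,
\end{equation*}
not just $(\mu-\nu)\cdot\widetilde\omega-(\alpha-\beta)\cdot\widetilde\omega$: the $1/\overline z_J$ factor contributes the extra $\widetilde\omega_J$. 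When you divide by the small denominator, the contribution to $\im\dot T_J$ therefore splits into two pieces: the first reproduces (a multiple of) the non-resonant summand you want to cancel, while the second is exactly $\widetilde\omega_J T_J$, which must be carried along as the update $\widetilde\omega_J z_J \mapsto \widetilde\omega_J\zeta_J=\widetilde\omega_J z_J+\widetilde\omega_J T_J$ of the leading linear term. In your write-up this second piece is silently dropped; it must be tracked and matched against $\partial_{\overline\zeta_J}E(\Phi_J[\zeta_J])$ rather than against the non-resonant sum, or the accounting does not close. Once you do the accounting with this correction (and normalize the coefficient of $T_J$ consistently with the $\frac{1+\varpi_J}{2\widetilde\omega_J}$ prefactor in \eqref{eq:FGR02}), the rest of your outline — including the treatment of $\mathcal G_J$ via Lemma \ref{lem:estfgr2}, \eqref{4.4}, Lemma \ref{lem:bound g}, and \eqref{equation:FGR3} — is the correct route to \eqref{4.33}.
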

\noindent We multiply \eqref{eq:FGR251} by $\overline{\zeta} _J\widetilde \omega_J^2 (1 + a_J(|\zeta _J|^2)   ) $, see \eqref{eq:Omega00}.  Let
\begin{equation*}
   A_J (s) :=4\int _0^s \widetilde \omega_J^2 \(1 + a_J(s)\)  ds   .
\end{equation*}
Near 0,
since $a_J(s)=O(s )$, we have $A_J (s)=\widetilde \omega_J^2s + O(s^2)$.
  By \eqref{eq:FGR251}, we obtain
  \begin{align*}
&  \frac{1}{4}\sum_{J=1}^{2n} \frac{d}{dt}  A_J ( |\zeta_J|^2) =  \sum_{J=1}^{2n} \Im [\widetilde \omega_J    \partial_{\overline{\zeta} _j} E(\Phi_J[\zeta _J]) \overline{\zeta} _J ] + \sum_{J=1}^{2n} \Im [  \widetilde \omega_J
\partial _{\overline{\zeta }_J} {\mathcal{Z}}_0  (\zeta )   ]
\\& +  \sum_{J=1}^{2n}\(\frac{\widetilde \omega_J}{4}\sum _{L\in \Lambda}  \sum _{ \substack{  (\mu , \nu )\in M _L\\
(\alpha , \beta )\in M _L }}  \Im\( (\nu _J-\mu_J)  \zeta  ^{\mu +\beta}
 \overline{ \zeta}^ { {\nu} +\alpha }
\( {G}_{\mu \nu }  | R_  {-\sigma _3B}^{+}(L)  \sigma _3{G}_{\alpha \beta}
   \)
  \) +\tilde \omega_J\Im \({\mathcal {G}}_J\bar \zeta_J\)\).
\end{align*}
The 1st    in the r.h.s.  equals 0  since  $E(\Phi_J[\zeta _J]) =F_J ( |\zeta_J|^2)$
for $F_J$ real valued. Also the 2nd term in the r.h.s. equals 0.
Indeed,
\begin{equation*}   \begin{aligned} &   \sum _{J }
  \widetilde \omega_J ( \overline{z} _J \partial _{\overline{z }_J} \mathcal{Z}_0   -z _J \partial _{ {z }_J} \mathcal{Z}_0   ) =\\&   \sum _{   \mathbf{m}  ,K  } a_{ K \textbf{m} }( |z _K|^2 )
  \sum _{J }
  \widetilde \omega_J ( \overline{z} _J \partial _{\overline{z }_J} \mathbf{Z}^{\textbf{m}}   -z _J \partial _{ {z }_J} \mathbf{Z}^{\textbf{m}}   ) +
\sum _{J } \sum _{   \mathbf{m}    } \mathbf{Z}^{\textbf{m}}   a_{ J \textbf{m} }'( |z _J|^2 ) |z _J|^2  .
\end{aligned}\end{equation*}
The 2nd term in the r.h.s. is real valued. The 1st term in the r.h.s. is equal to
\begin{equation*}   \begin{aligned} &      \sum _{   \mathbf{m}  ,K  } a_{ K \textbf{m} }( |z _K|^2 )
 \sum _{J }
  \widetilde \omega_J ( \overline{z} _J \partial _{\overline{z }_J} \mathbf{Z}^{\textbf{m}}   -z _J \partial _{ {z }_J} \mathbf{Z}^{\textbf{m}}   )     =  \sum _{   \mathbf{m}  ,K  } a_{ K \textbf{m} }( |z _K|^2 )\mathbf{Z}^{\textbf{m}}\sum _{J,L}    \widetilde{\omega}_J ( m _{LJ}  - m _{JL} ) =0.
\end{aligned}\end{equation*}
  where we have used  \eqref{eq:setM0} for the last equality.

  \noindent
For
\begin{equation} \label{eq:pos2} \begin{aligned} &
  G _L(\zeta ) :=   \sum _{   (\mu , \nu )\in M _L } \zeta  ^{\mu }
 \overline{ {\zeta }}^ { {\nu}   } {G}_{\mu \nu }
\end{aligned}\end{equation}
 we conclude
\begin{align*}
   \sum_{J=1}^{2n} \frac{d}{dt}  A_J ( |\zeta_J|^2) &=
\frac{1}{4}\sum _{L\in \Lambda}  \widetilde \omega\cdot(\nu -\mu) \Im
\( {G}_{L }  | R_  {-\sigma _3B}^{+}(L)  \sigma _3{G}_{L}
  \) +\sum_{J=1}^{2n}\widetilde \omega_J\Im \({\mathcal G}_J\bar \zeta_J\)\\&=\frac{1}{4}\sum _{L\in \Lambda}  L \Im\(
 {G}_{L }  | R_  {-\sigma _3B}^{+}(L)  \sigma _3{G}_{L}
   \)
   +\sum_{J=1}^{2n}\widetilde \omega_J\Im \({\mathcal G}_J\bar \zeta_J\) .
\end{align*}
We can now substitute
$
  R_{-\sigma _3B}^{+ }( L) = P.V. \frac{1}{{-\sigma _3B}-L}+ \im \pi \delta ( -{\sigma _3B}-L) ,$  which can be defined in terms of distorted Fourier transform associated to $-\Delta +V$, see \cite{taylor}.  The contribution of   the principal value cancels out because $P.V. \frac{1}{{-\sigma _3B}-L}\sigma_3$ is symmetric.
Therefore, we have
\begin{align}
& \sum_{J=1}^{2n} \frac{d}{dt}  A_J ( |\zeta_J|^2) =  -\frac{\pi}{4}\sum _{L\in \Lambda}  L \<
 {G}_{L }  | \delta(-\sigma_3 B-L)  \sigma _3{G}_{L}
   \>
   +\sum_{J=1}^{2n}\widetilde \omega_J\Im \({\mathcal G}_J\bar \zeta_J\)\label{eq:master0}\\&=
\frac{\pi}{4}\(\sum _{\substack{L\in \Lambda\\ L>m}}  L \<
 \pi_2{G}_{L }  | \delta(B-L)   \pi_2{G}_{L}
   \> +\sum _{\substack{L\in \Lambda\\ L<-m}}  |L| \<
 \pi_1{G}_{L }  | \delta(B-|L|)   \pi_1{G}_{L}
   \>  \)
   +\sum_{J=1}^{2n}\widetilde \omega_J\Im \({\mathcal G}_J\bar \zeta_J\),\nonumber
\end{align}
where $\pi_j$ is the projection to the $j-th$ component and we have used
 \begin{equation*}   \begin{aligned} &  \delta ({-\sigma _3B-L})  \sigma _3    =  \left\{\begin{matrix}   \begin{pmatrix}  0 &
0 \\
0 &  - \delta ({ B-L})
\end{pmatrix}
  \text{ if $L>m$ ,}   \\   \begin{pmatrix}    \delta ({- B-L}) &
0 \\
0 &  0
\end{pmatrix}
 \text{ if $L<-m$,}
\end{matrix}\right.
\end{aligned}\end{equation*}
and $\delta(-B-L)=\delta(B+L)$.
We claim
\begin{equation}\label{claim1}
      \langle \delta ({B-|L|}) u  |
    u  \rangle  \geq 0.
\end{equation}
Let $\kappa\in \R$, $\tau \in \R_+$ and $f:\R _+\to \R$ smooth strictly monotonic and with
$f(\tau ^2)=\kappa $.
 Then for $H= (-\Delta +V)P_c$, and for $\widehat{u}$ and  $\widehat{v}$  distorted Fourier transforms
 associated to $H$ we have,   ch.2 \cite{friedlander},
\begin{equation*}   \begin{aligned} &    \langle \delta ( f(H)-\kappa ) u |  {v}
\rangle   =   \langle    \delta (  f(\xi ^2)-\kappa ) \widehat{u}  |
 {\widehat{v}}  \rangle  =\int _{\R} dt \delta (  t-\tau ) \int _{f(\xi ^2)=t} \widehat{u}(\xi ) \overline{\widehat{v}}(\xi ) \frac{dS_t}{2 f'(t ^2)t} \\& = \frac{1}{2 f'(\tau ^2)\tau} \int _{|\xi |=\tau }  \widehat{u}(\xi ) \overline{\widehat{v}}(\xi )dS   \text{ with }dS=dS_\tau ,
\end{aligned}\end{equation*}
with $dS_t$  the standard measure on the  sphere   $|\xi |=t$  in $\R ^3$.
\noindent For   $L>m$ and $f(t)=\sqrt{t+m^2} $ we have
\begin{equation*}   \begin{aligned} &    \langle \delta ( B-|L| ) u|  u
\rangle   =    \frac{|L|}{\sqrt{L^2-m^2}} \int _{|\xi |=\sqrt{L ^2-m^2} } |\widehat{u}| ^2 dS\ge 0.
\end{aligned}\end{equation*}
We are are finally able to state in  a precise way  hypothesis (H4).

\begin{itemize}
\item[(H4)]  We assume
that there is a fixed constant $\mathfrak{c} >0$ s.t. for all $\zeta \in
\mathbb{C} ^{2n}$  with $|\zeta| \le 1$,
\begin{equation}\label{eq:FGR}
\sum _{\substack{L\in \Lambda\\ L>m}}  L \<
 \pi_2{G}_{L }  | \delta(B-L)   \pi_2{G}_{L}
   \> +\sum _{\substack{L\in \Lambda\\ L<-m}}  |L| \<
 \pi_1{G}_{L }  | \delta(B-|L|)   \pi_1{G}_{L}
   \>
\geq \frac {4\mathfrak{c} }{\pi}\sum_{(\mu,\nu)\in M_{min}}|\zeta^{\mu+\nu}|^2.
\end{equation}
\end{itemize}

By an application of Lemma \ref{lem:chcoo}, \eqref{eq:master0} and (H4)
we arrive  as in \cite{CM1} at
\begin{align}
 \sum_{J=1}^{2n} \frac{d}{dt}  A_J ( |\zeta_J|^2) \geq
\mathfrak c \sum_{(\mu,\nu)\in M_{min}}|\zeta^{\mu+\nu}|^2
   +\sum_{J=1}^{2n}\widetilde \omega_J\Im \(\mathcal G_J\bar \zeta_J\) .\nonumber
\end{align}
Using $A_J ( |\zeta_J (t)|^2) =   4\omega_J^2|\zeta_J (t)|^2 + O(|\zeta_J (t)|^4)$,
\eqref{equation:FGR3}, going back to the   $z$ and for $c_1$ the constant in   \eqref{4.33},
when we integrate the above equation, we get
\begin{equation*}  \begin{aligned} &
     4  \sum _{J=1} ^{2}    \omega _J^2 (|  z_J(0)  | ^2 - |  z_J(t)  | ^2) + \mathfrak{c}\sum _{ ( \mu , \nu )
   \in M _{min}}  \| z^{\mu +\nu } \| _{L^2(0,t)}^2
 \le   3 (1+C_0) c_1 \epsilon ^2   .
\end{aligned}
\end{equation*}
On the other hand, by the conservation of the energy \eqref{eq:enexp411}  we have $|z(t)|\lesssim \epsilon $ for a some fixed constant. So we can conclude, perhaps for a larger $c_1$, that
\begin{equation*} \label{eq:crunch}\begin{aligned}&  \sum _{ ( \mu , \nu )
   \in M}  \| z^{\mu +\nu } \| _{L^2(0,t)}^2\le  8 \mathfrak{c} ^{-1}c_1 (1+C_0)  \epsilon ^2.\end{aligned}
\end{equation*}
This  tells us that $\| z ^{\mu +\nu } \| _{L^2(0,t)}^2\lesssim
  C_0^2\epsilon ^2$ implies  $\| z ^{\mu +\nu } \|
_{L^2(0,t)}^2\lesssim \epsilon ^2+ C_0\epsilon ^2$ for all $( \mu , \nu )
   \in M _{min}$.   This means that we can take
$C_0\sim 1$  and completes the proof of  Prop. \ref{prop:mainbounds}.

\appendix

\section{Proof of (iii) of Lemma \ref{lem:contcoo}}
\begin{lemma}\label{lem:appendix1}
Let $f:\C\to \C$ satisfy $f(e^{\im \theta}z)=e^{\im \theta}f(z)$.
Then, we have
\begin{align*}
e^{-\im \theta}\partial_{z_R}f(e^{\im \theta}z)&=\cos \theta \partial_{z_R}f(z)-\sin\theta \partial_{z_I}f(z),\\
e^{-\im \theta}\partial_{z_I}f(e^{\im \theta}z)&=\sin \theta \partial_{z_R}f(z)+\cos\theta \partial_{z_I}f(z).
\end{align*}
\end{lemma}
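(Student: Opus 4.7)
The plan is to differentiate the gauge identity $f(e^{\im\theta}z)=e^{\im\theta}f(z)$ with respect to the real variables $z_R$ and $z_I$, treating $\theta$ as a fixed parameter, and then invert the resulting $2\times 2$ system. Setting $w:=e^{\im\theta}z$, I would write $w_R=\cos\theta\, z_R-\sin\theta\, z_I$ and $w_I=\sin\theta\, z_R+\cos\theta\, z_I$, so that the Jacobian of $z\mapsto w$ is the rotation matrix $R_\theta=\bigl(\begin{smallmatrix}\cos\theta & -\sin\theta \\ \sin\theta & \cos\theta\end{smallmatrix}\bigr)$.

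Applying the chain rule to $f(w(z))=e^{\im\theta}f(z)$ gives
\begin{align*}
\cos\theta\,\partial_{w_R}f(w)+\sin\theta\,\partial_{w_I}f(w) &= e^{\im\theta}\partial_{z_R}f(z),\\
-\sin\theta\,\partial_{w_R}f(w)+\cos\theta\,\partial_{w_I}f(w) &= e^{\im\theta}\partial_{z_I}f(z).
\end{align*}
This is the linear system $R_\theta^{\mathsf T}\bigl(\begin{smallmatrix}\partial_{w_R}f(w)\\ \partial_{w_I}f(w)\end{smallmatrix}\bigr)=e^{\im\theta}\bigl(\begin{smallmatrix}\partial_{z_R}f(z)\\ \partial_{z_I}f(z)\end{smallmatrix}\bigr)$. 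Since $R_\theta^{\mathsf T}$ is orthogonal with inverse $R_\theta$, multiplying from the left by $R_\theta$ yields
\begin{align*}
\partial_{w_R}f(w) &= e^{\im\theta}\bigl(\cos\theta\,\partial_{z_R}f(z)-\sin\theta\,\partial_{z_I}f(z)\bigr),\\
\partial_{w_I}f(w) &= e^{\im\theta}\bigl(\sin\theta\,\partial_{z_R}f(z)+\cos\theta\,\partial_{z_I}f(z)\bigr),
\end{align*}
and dividing through by $e^{\im\theta}$ reproduces the two claimed identities.

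There is no genuine obstacle: the entire argument is the chain rule plus the inversion of a rotation matrix. The only point worth flagging is that the identity $f(e^{\im\theta}z)=e^{\im\theta}f(z)$ must be differentiable in $z_R,z_I$, which is granted by hypothesis since $f$ is assumed to be a smooth (in fact analytic, in the applications to $\Phi[z]$ and $C_{JA}[z]$) $\C$-valued function. The lemma then applies component-wise to vector-valued functions, which is how it will be used to establish gauge covariance of $R[z]$ and of the symplectic form coefficients in the main text.
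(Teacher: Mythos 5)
Your proof is correct and follows essentially the same route as the paper's: both differentiate the gauge identity in the real variables $z_R,z_I$ to produce the system $R_\theta^{\mathsf T}\nabla f(e^{\im\theta}z)=e^{\im\theta}\nabla f(z)$ and then invert the rotation; the paper just writes out the two directional derivatives explicitly as $\varepsilon$-limits rather than packaging the chain rule through the substitution $w=e^{\im\theta}z$. (Incidentally, your version exposes a small sign typo in the paper's intermediate step, where the coefficient of $\partial_{z_R}f(e^{\im\theta}z)$ in the $\partial_{z_I}f_\theta$ computation should be $-\sin\theta$.)
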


\begin{proof}
We set $f_\theta(z):=f(e^{\im \theta}z)$.
First, we have
\begin{align*}
\partial_{z_R}f_\theta(z)&=\left.\frac{d}{d \varepsilon}\right|_{\varepsilon=0}f(e^{\im \theta}(z+\varepsilon))=\left.\frac{d}{d \varepsilon}\right|_{\varepsilon=0}f(e^{\im \theta}z+\cos\theta \varepsilon+\im \sin \theta \varepsilon)\\&=\cos\theta \ \partial_{z_R}f(e^{\im \theta}z)+\sin\theta \ \partial_{z_I}f(e^{\im \theta}z) ,\\
\partial_{z_I}f_\theta(z)&=\left.\frac{d}{d \varepsilon}\right|_{\varepsilon=0}f(e^{\im \theta}(z+\im \varepsilon))=\left.\frac{d}{d \varepsilon}\right|_{\varepsilon=0}f(e^{\im \theta}z-\sin\theta \varepsilon+\im \cos \theta \varepsilon)\\&=\sin\theta \ \partial_{z_R}f(e^{\im \theta}z)+\cos\theta \ \partial_{z_I}f(e^{\im \theta}z).
\end{align*}
On the other hand,
\begin{align*}
\partial_{z_R}f_\theta(z)&=\left.\frac{d}{d \varepsilon}\right|_{\varepsilon=0}f(e^{\im \theta}(z+\varepsilon)=e^{\im \theta}\left.\frac{d}{d \varepsilon}\right|_{\varepsilon=0}f(z+\varepsilon)=e^{\im \theta}\partial_{z_R}f(z),\\
\partial_{z_I}f_\theta(z)&=\left.\frac{d}{d \varepsilon}\right|_{\varepsilon=0}f(e^{\im \theta}(z+\im \varepsilon)=e^{\im \theta}\left.\frac{d}{d \varepsilon}\right|_{\varepsilon=0}f(z+\im \varepsilon)=e^{\im \theta}\partial_{z_I}f(z),\\
\end{align*}
Therefore, we have the conclusion.
\end{proof}

\begin{proof}[Proof of (iii) of Lemma \ref{lem:contcoo}]
We set $c_{JA}:=C_{JA}'(e^{\im \theta}z)(e^{\im \theta}\Xi)$ and $\partial_{J'R}\Phi [0]=\psi_0$, $\partial_{J'I}\Phi [0]=\im\psi_0$.
Then, since $C_{JA}'(e^{\im \theta}z)(e^{\im \theta}\Xi)$ is the unique solution of \eqref{eq:contcoo5} with $z,\Xi$ replaced by $e^{\im \theta}z,e^{\im \theta}\Xi$, we have
\begin{align*}
0&=\Omega\(\partial_{z_{JR}}\Phi[e^{\im \theta}z]|{B}^{-\frac{\sigma _3}{2}}e^{\im \theta}\Xi + \sum _{J'=1}^{2n} c_{J'R} \psi_0 + \im c_{J'I}\psi_0\)\\&=
\cos\theta\Omega\(\partial_{z_{JR}}\Phi[z]|{B}^{-\frac{\sigma _3}{2}}\Xi + \sum _{J'=1}^{2n} (\cos \theta c_{J'R}+\sin\theta c_{J'I})\psi_0+(\cos\theta c_{J'I}-\sin\theta c_{J'R})\im \psi_0\)\\&\quad
-\sin\theta\Omega\(\partial_{z_{JI}}\Phi[z]|{B}^{-\frac{\sigma _3}{2}}\Xi + \sum _{J'=1}^{2n} (\cos \theta c_{J'R}+\sin\theta c_{J'I})\psi_0+(\cos\theta c_{J'I}-\sin\theta c_{J'R})\im \psi_0\)
\end{align*}
and
\begin{align*}
0&=\Omega\(\partial_{z_{JI}}\Phi[e^{\im \theta}z]|{B}^{-\frac{\sigma _3}{2}}e^{\im \theta}\Xi + \sum _{J'=1}^{2n} c_{J'R} \psi_0 + \im c_{J'I}\psi_0\)\\&=
\sin\theta\Omega\(\partial_{z_{JR}}\Phi[z]|{B}^{-\frac{\sigma _3}{2}}\Xi + \sum _{J'=1}^{2n} (\cos \theta c_{J'R}+\sin\theta c_{J'I})\psi_0+(\cos\theta c_{J'I}-\sin\theta c_{J'R})\im \psi_0\)\\&\quad
+\cos\theta\Omega\(\partial_{z_{JI}}\Phi[z]|{B}^{-\frac{\sigma _3}{2}}\Xi + \sum _{J'=1}^{2n} (\cos \theta c_{J'R}+\sin\theta c_{J'I})\psi_0+(\cos\theta c_{J'I}-\sin\theta c_{J'R})\im \psi_0\),
\end{align*}
for all $J$.
Therefore, we have
\begin{align*}
0=\Omega\(\partial_{z_{JA}}\Phi[z]|{B}^{-\frac{\sigma _3}{2}}\Xi + \sum _{J'=1}^{2n} (\cos \theta c_{J'R}+\sin\theta c_{J'I})\psi_0+(\cos\theta c_{J'I}-\sin\theta c_{J'R})\im \psi_0\),
\end{align*}
for all $J,A$.

Since $C_{J'A'}'(z)\Xi$ is a unique solution of \eqref{eq:contcoo5}, we have
\begin{align*}
C_{J'R}'(z)\Xi=\cos \theta c_{J'R}+\sin \theta c_{J'I},\quad C_{J'I}'(z)\Xi = -\sin \theta c_{J'R}+\cos \theta C_{J'I}.
\end{align*}
Therefore, we obtain,
\begin{align*}
C_{J'R}'(e^{\im \theta}z)(e^{\im \theta}\Xi)&=\cos\theta C'_{J'R}(z)\Xi-\sin \theta C'_{J'I}(z)\Xi,\\
C_{J'R}'(e^{\im \theta}z)(e^{\im \theta}\Xi)&=\sin\theta C'_{J'R}(z)\Xi+\cos \theta C'_{J'I}(z)\Xi,
\end{align*}

Finally, notice that it suffices to show $R[e^{\im \theta}z](e^{\im \theta}\Xi) = e^{\im \theta } R[z]\Xi$.
\begin{align*}
&R[e^{\im \theta}z](e^{\im \theta}\Xi)=B^{-\frac{\sigma}{2}}e^{\im \theta}\Xi + \sum_{J'=1}^{2n}(C_{J'R}'(e^{\im \theta}z)e^{\im \theta}\Xi)\psi_0+(C_{J'I}'(e^{\im \theta}z)e^{\im \theta}\Xi)\im \psi_0=\\&
e^{\im \theta}B^{-\frac{\sigma}{2}}\Xi + \sum_{J'=1}^{2n}(C_{J'R}'(\cos\theta C'_{J'R}(z)\Xi-\sin \theta C'_{J'I}(z)\Xi)\psi_0+(\sin\theta C'_{J'R}(z)\Xi+\cos \theta C'_{J'I}(z)\Xi)e^{\im \theta}\Xi)\im \psi_0\\&
=e^{\im \theta}\(B^{-\frac{\sigma}{2}}\Xi + \sum_{J'=1}^{2n}(C_{J'R}'(z)\Xi)\psi_0+(C_{J'I}'(z)\Xi)\im \psi_0\)=e^{\im \theta}R[z]\Xi.
\end{align*}
Therefore, we have the conclusion.
\end{proof}

\section*{Acknowledgments}   S.C. was partially funded      grants FIRB 2012 (Dinamiche Dispersive) from the Italian Government  and   FRA 2013 from the University of Trieste.
M.M. was supported by the Japan Society for the Promotion of Science (JSPS) with the Grant-in-Aid for Young Scientists (B) 15K17568. T. P. is partly supported by Simons Foundation, grant \# 354889.

\bibliographystyle{amsplain}

Department of Mathematics and Geosciences,  University
of Trieste, via Valerio  12/1,  Trieste,  34127,  Italy. {\it E-mail Address}: {\tt scuccagna@units.it}
\\

Department of Mathematics and Informatics,
Faculty of Science,
Chiba University,
Chiba 263-8522, Japan.
{\it E-mail Address}: {\tt maeda@math.s.chiba-u.ac.jp}
\\

Department of Mathematics,
University of Tennessee, 227 Ayres Hall, 1403 Circle Drive,
Knoxville, TN 37996-1320, United States.
{\it E-mail Address}: {\tt phan@math.utk.edu}

\end{document}